\newtheorem{lem}{Lemma}[section]
\newtheorem{thm}[lem]{Theorem}
\newcommand{\bqn}{\begin{equation}}
\newcommand{\eqn}{\end{equation}}
\newcommand{\beqx}{\begin{equation*}}
\newcommand{\eeqx}{\end{equation*}}
\newcommand{\barr}{\begin{array}}
\newcommand{\earr}{\end{array}}
\newcommand{\beqn}{\begin{eqnarray}}
\newcommand{\eeqn}{\end{eqnarray}}
\newcommand{\beqnx}{\begin{eqnarray*}}
\newcommand{\eeqnx}{\end{eqnarray*}}
\newcommand{\bmt}{\begin{multline}}
\newcommand{\emt}{\end{multline}}
\numberwithin{equation}{section}
\newcommand{\pa}{\partial} 
\newcommand{\bl}{^\bullet}
\def\det{\mathop{\rm det}\nolimits}
\newcommand{\sA}{\mathscr{A}}
\newcommand{\sF}{\mathscr{F}}
\newcommand{\sK}{\mathscr{K}}
\newcommand{\sL}{\mathscr{L}}
\newcommand{\sN}{\mathscr{N}}
\newcommand{\sO}{\mathscr{O}}
\newcommand{\sW}{\mathscr{W}}
\newcommand{\bbn}{{\mathbb N}}
\newcommand{\real}{\mathbb R}
\newcommand{\al}{\alpha}
\newcommand{\ga}{{\gamma}}
\newcommand{\ve}{\varepsilon}
\newcommand{\la}{\lambda}
\newcommand{\La}{\Lambda}
\newcommand{\ro}{\rho}
\newcommand{\er}{\eqref}
\newcommand{\lb}{\label}
\newcommand{\qu}{\quad}
\title{Steady States of Gas Ionization with Secondary Emission} 
\author{%
{\large\sc Walter A. Strauss${}^1$}
{\normalsize and}
{\large\sc Masahiro Suzuki${}^2$}
}
\date{%
\normalsize
${}^1$%
Department of Mathematics and Lefschetz Center for Dynamical Systems, 
Brown University, 
\\
Providence, RI 02912, USA
\\ [7pt]
${}^2$%
Department of Computer Science and Engineering, 
Nagoya Institute of Technology,
\\
Gokiso-cho, Showa-ku, Nagoya, 466-8555, Japan
}
\begin{document}

\maketitle


\begin{abstract}
We consider the steady states of a gas between two parallel plates that is 
ionized by a strong electric field so as to create a plasma.  
There can be a cascade of electrons due both to the electrons colliding with the gas molecules and 
to the ions colliding with the cathode (secondary emission).   We use 
global bifurcation theory to prove that there is a one-parameter family $\sK$ of such steady 
states with the following property.  The curve $\sK$ begins at the sparking 
voltage and either the particle density becomes unbounded or $\sK$ 
ends at an anti-sparking voltage. These critical voltages are characterized explicitly.  
\end{abstract}

\begin{description}
\item[{\it Keywords:}]
ionization; gas discharge; secondary emission; 
sparking voltage; 
global bifurcation; 
plasma

\item[{\it 2010 Mathematics Subject Classification:}]
35A01; 
35M12; 
70K50; 
76X05; 
82D10. 
\end{description}


\newpage

\section{Introduction}

This paper is concerned with a model for the ionization of a gas such as air 
due to a strong applied electric field.   For instance, 
the strong electric field may be created when a capacitor discharges into a gap between electrodes.  
The high voltage thereby creates a plasma, which may possess very hot or bright electrical arcs.  
A century ago Townsend experimented with a pair of parallel plates 
to which he applied a strong voltage that produced cascades of free electrons and ions.  
This phenomenon is called the Townsend discharge or avalanche. 

Such an avalanche primarily occurs due to free electrons colliding with gas molecules,
thus liberating other electrons.  This is called the $\al$-mechanism.  
Another important contribution to an avalanche may be due to the impact of ions with the cathode, 
which then emits additional electrons.  This is called the secondary emission or the $\ga$-mechanism.  
In this paper we discuss a model that takes account of both mechanisms.  

The model is as follows. 
Let $I=(0,L)$ be the distance between the planar parallel plates. 
 Let us put the anode at $x=0$ and the cathode at $x=L$.  
Let $\ro_i$ be the density of positive ions, $\ro_e$ the density of electrons, and $-\Phi$  
the electrostatic potential.  
Let $u_i$ and $u_e$ be the ion and electron velocities.  
Then the equations within the region $I$ are as follows.  
\begin{subequations}\lb{Mmodel}   
\begin{gather}
\partial_t\ro_i+\partial_x(\ro_iu_i) = 
a \exp\left(-b{|\partial_x \Phi|^{-1}} \right) \ro_e\left|v_e\right| ,
\lb{DLMi} \\
\partial_t\ro_e+\partial_x(\ro_eu_e) = 
a \exp\left(-b{|\partial_x \Phi|^{-1}} \right) \ro_e\left|v_e\right| ,
\lb{DLMe} \\
\partial_x^2 \Phi=\ro_i-\ro_e,
\lb{DLMp} \\
u_i:=k_i {\partial_x \Phi}, \ \ 
u_e:=v_e-k_e \partial_x \rho_e/\rho_e, \ \ 
v_e:=-k_e {\partial_x \Phi}, 
\lb{DLMu}
\end{gather}
\end{subequations}
Here $k_i$, $k_e$, $a$, and $b$ are positive constants. 
The constitutive velocity relations \er{DLMu} are due to the ions 
being much heavier than the electrons. 
The right sides of \er{DLMi} and \er{DLMe} come from the $\al$-mechanism.  
They express the number of ion--electron pairs generated per unit volume 
by the impacts of the electrons.  
Specifically, the coefficient $\al=a \exp\left(-b{|\partial_x \Phi|^{-1}} \right) $ is 
the first Townsend ionization coefficient. 

The boundary conditions at the anode $x=0$ are $\rho_i=\rho_e=\Phi=0$, 
due to the assumption that the anode is a perfect conductor, so that the 
electrons are absorbed by the anode and the ions are repelled from the anode. 
We denote the voltage at the cathode $x=L$ by $V_c>0$.  
The secondary emission at the cathode (or $\ga$-mechanism) 
is expressed by 
\bqn   \label{gamma}
\rho_eu_e=-\ga\rho_iu_i   \eqn
where $\ga > 0$ is average number of electrons ejected from the cathode by an ion impact. 

In this paper we consider the steady state problem, where the unknowns do not depend on time, 
even though the individual particles can move rapidly.  
First of all, there are the completely trivial solutions $\rho_i\equiv0, \rho_e\equiv0$, 
$\Phi(x)=\frac{V_c}L x$, where $V_c$ is an arbitrary constant.  
Avalanche does not occur unless the electric field is strong enough.  
In our model 
the ionization coefficient $a$  or the secondary emission coefficient $\ga$ must be large enough, 
depending on $b$ and $L$, in order to reach this threshold.  
Then the critical threshold value of the voltage is called the {\it sparking voltage} $V_c^\dagger$.  
Assuming that the sparking voltage does exist, 
we prove that there are many other steady solutions, in fact a whole global curve of them, 
for most  choices of the parameters $(a,b,\gamma)$. 

\begin{thm} \label{mainthm0}
Assume that the sparking voltage $V_c^\dagger$ exists. 
For almost every $(a,b,\gamma)$, 
there exists a unique continuous one-parameter family $\mathcal{K}$ (that is, a curve) 
of steady solutions of the system of equations 
together with the boundary conditions written above with the following properties.  
Both densities are positive, $\rho_i\in C^1, \ \rho_e\in C^2, \ \Phi\in C^3$, 
the curve begins at the trivial solution with voltage $V_c^\dagger$ 
and ``ends'' with one of the following three alternatives: 

Either the density $|\rho_i| + |\rho_e|$ becomes unbounded along $\sK$, 

Or the potential $\Phi$ becomes unbounded along $\sK$, 

Or the curve ends at a different trivial solution with some voltage $V_c^\ddagger > V_c^\dagger$.
\end{thm}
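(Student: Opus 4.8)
The plan is to recast the steady boundary-value problem as a real-analytic operator equation $F(V_c,w)=0$ in a Banach space and apply the analytic theory of global bifurcation, with $V_c$ the bifurcation parameter. A crucial preliminary is the choice of unknowns: the raw system contains $\partial_x\rho_e/\rho_e$, which is singular on the trivial branch $\rho_e\equiv0$, so I would work with the particle fluxes $j_i:=k_i\rho_i\partial_x\Phi=\rho_iu_i$ and $j_e:=-k_e(\rho_e\partial_x\Phi+\partial_x\rho_e)=\rho_eu_e$, which are everywhere smooth. In steady state $\partial_x j_i=\partial_x j_e$, so $j_i-j_e\equiv J$ (the current) is constant; the cathode condition $j_e=-\gamma j_i$ at $x=L$ together with $j_i(0)=0$ fixes $J$ and $j_e(L)$ in terms of the ionization source. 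Writing $E:=\partial_x\Phi$ and $\alpha(E):=a\exp(-b/|E|)$ (note $\alpha$, together with all its derivatives, extends by $0$ to $E=0$, so $\alpha$ is $C^\infty$), the problem becomes a first-order system for $(E,j_e,\rho_e)$, with $\rho_i=j_i/(k_iE)=(j_e+J)/(k_iE)$, posed on the open set $\{E>0\ \text{on}\ [0,L]\}$, which contains every trivial solution with $V_c>0$. Subtracting the trivial solution (replace $\Phi$ by $\Phi-\tfrac{V_c}{L}x$) and inverting the linear elliptic part (in essence the Poisson problem with its boundary conditions, which in one dimension is a compact operator) puts the equation in Leray--Schauder form $w=\cL(V_c)w+\cH(V_c,w)$ with $\cL(V_c)$ compact linear, $\cH(V_c,\cdot)=O(\|w\|^2)$ compact, both real-analytic, and $w=0$ exactly the trivial branch.

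Next I would linearize at the trivial solution. For $V_c>0$ it has $E\equiv V_c/L$, hence constant Townsend coefficient $\alpha_0=a\exp(-bL/V_c)$, and a direct computation shows that $D_wF(V_c,0)$ is Fredholm of index zero and fails to be invertible precisely when the Townsend breakdown relation holds, which here takes the explicit form $\gamma\bigl(\exp(\alpha_0L)-1\bigr)=1$, i.e.\ $\gamma\bigl(\exp(aL\,e^{-bL/V_c})-1\bigr)=1$; by hypothesis $V_c^\dagger$ is the least such $V_c$. At $V_c=V_c^\dagger$ I would check that the kernel of $D_wF$ is one-dimensional, spanned by the breakdown eigenfunction (whose density components are sign-definite), and that the Crandall--Rabinowitz transversality condition holds, namely that the mixed derivative $D_{V_c}D_wF(V_c^\dagger,0)$ sends a kernel vector outside the range of $D_wF(V_c^\dagger,0)$. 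Both simplicity of the kernel and transversality reduce to the nonvanishing of an explicit real-analytic function of $(a,b,\gamma)$, whose zero set is a proper real-analytic subvariety, hence Lebesgue-null; this is the origin of the ``almost every'' in the statement. For such parameters the analytic Crandall--Rabinowitz theorem produces a local real-analytic curve of nontrivial solutions through $(V_c^\dagger,0)$, with strictly positive densities on one side.

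I would then continue this curve by the Dancer--Buffoni--Toland theory of analytic global bifurcation: it extends to a maximal path $\mathcal{K}$, the locally analytic image of a half-line (or a circle), along which exactly one alternative holds --- $\|(V_c,w)\|\to\infty$; $\mathcal{K}$ approaches the boundary of the open set on which $F$ is analytic and $D_wF$ is Fredholm of index zero, i.e.\ $\inf_{[0,L]}\partial_x\Phi\to0$; or $\mathcal{K}$ is a closed loop. It then remains to reconcile this with the three stated conclusions and to establish positivity and regularity. The positivity step: a continuity argument along $\mathcal{K}$ starting from the positive local branch shows that as long as $E>0$ on $[0,L]$ one has $J>0$ and $\rho_i,\rho_e>0$ on $(0,L]$ --- where $\rho_e\ge0$ the fluxes are monotone, so $j_i$ runs from $0$ up to $J/(1+\gamma)>0$ and $j_e$ from $-J$ up to $-\gamma J/(1+\gamma)<0$, whence the integrating-factor formula for $\rho_e'+E\rho_e=-j_e/k_e$ with $\rho_e(0)=0$ gives $\rho_e>0$ and then $\rho_i=j_i/(k_iE)>0$ on $(0,L]$; since $\rho_e\equiv0$ would force $J=0$ and the trivial solution, ``$\rho_e\ge0$'' is open and closed on the part of $\mathcal{K}$ with $E>0$. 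This, with $E>0$, bootstraps $\rho_i\in C^1$, $\rho_e\in C^2$, $\Phi\in C^3$. The reconciliation: if $\inf_{[0,L]}\partial_x\Phi\to0$ at an interior point while the densities stay bounded, then $j_i=k_i\rho_iE\to0$ there, which is impossible because along an arc of $\mathcal{K}$ staying bounded and away from the trivial branch $j_i(L)=J/(1+\gamma)$ is bounded below, so this alternative forces $|\rho_i|+|\rho_e|\to\infty$; the degenerate endpoint case $\partial_x\Phi(0)\to0$ is disposed of by a separate estimate exploiting that $j_i$ and $\partial_x\Phi$ vanish at $x=0$ to comparable order. If $\|(V_c,w)\|\to\infty$ with bounded densities, then $\partial_x^2\Phi=\rho_i-\rho_e$ and $\int_0^L\partial_x\Phi\,dx=V_c$ bound $\Phi$ in every norm in terms of $V_c$, so $V_c\to\infty$ and $\Phi$ is unbounded. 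Finally, a closed loop must re-meet the trivial branch, which under the ``almost every'' hypothesis happens only at voltages solving the Townsend relation; the first beyond $V_c^\dagger$ yields the $V_c^\ddagger>V_c^\dagger$ of the third alternative, while a loop returning to $V_c^\dagger$ is ruled out because $\mathcal{K}$ leaves $(V_c^\dagger,0)$ in a definite direction.

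The principal obstacles, I expect, lie at the two ends of this program rather than in its bifurcation-theoretic core. First, showing that $F$ in the flux variables is genuinely real-analytic and that $D_wF$ is Fredholm of index zero on $\{\partial_x\Phi>0\}$ --- the $\exp(-b/|\partial_x\Phi|)$ nonlinearity and the elimination of $\rho_e$ from denominators both demand care. Second, the global bookkeeping: establishing that positivity of the densities and of $\partial_x\Phi$ persists along all of $\mathcal{K}$, and that the abstract trichotomy (unbounded / boundary-of-domain / loop) really does collapse onto the three physical alternatives, which hinges on the a priori estimates that convert ``$\partial_x\Phi\to0$ somewhere'' and ``$\|(V_c,w)\|\to\infty$'' into ``density unbounded'' or ``$\Phi$ unbounded'', together with the delicate small-voltage behaviour near the anode.
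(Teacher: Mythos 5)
Your overall architecture (analytic global bifurcation from a simple eigenvalue at the sparking voltage, almost-everywhere transversality via analytic varieties, propagation of positivity, then reconciliation of the abstract alternatives with the physical ones) is the same as the paper's. But there is a concrete error at the very first substantive step: the dispersion relation you write down, $\gamma\bigl(\exp(aL\,e^{-bL/V_c})-1\bigr)=1$, is the classical drift-only Townsend criterion and is \emph{not} the breakdown condition for this model. Your own flux $j_e=-k_e(\rho_e\partial_x\Phi+\partial_x\rho_e)$ contains the diffusive term $-k_e\partial_x\rho_e$, so the linearized electron equation on the trivial branch is the \emph{second-order} ODE $\partial_x^2\rho_e+\lambda\partial_x\rho_e+h(\lambda)\rho_e=0$ (equivalently $-\partial_x^2R_e-g(V_c)R_e=0$ after the substitution $R_e=\rho_e e^{\lambda x/2}$, with $g=h-\lambda^2/4$), coupled to a nonlocal cathode condition. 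The correct solvability condition is the function $D$ of \eqref{spfu}, built from $\cosh$ and $\sinh$ of $\mu=L\sqrt{-g(V_c)}$, not a single exponential. This is not a cosmetic difference: the hypothesis ``the sparking voltage exists'' in the theorem refers to roots of this $D$, and, more importantly, the sign-definiteness of the kernel eigenfunction (which you merely assert) is \emph{false} in general --- when $g(V_c)>0$ the eigenfunction is $\sin\sqrt{g}\,x$ and changes sign on $(0,L)$ unless $g(V_c)<\pi^2/L^2$. The paper must prove (Lemma \ref{null1}(d)) that the \emph{smallest} root of $D$ automatically satisfies $g(V_c^\dagger)<\pi^2/L^2$; without that step you cannot start the positive branch at all, and the same condition reappears at the far end $V_c^\ddagger$ in alternative (B).

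A second gap is in your reconciliation of the boundary alternative $\inf_{[0,L]}\partial_x\Phi\to0$ with the stated conclusions. Your argument rests on ``$j_i(L)=J/(1+\gamma)$ is bounded below along an arc staying bounded and away from the trivial branch,'' but nothing you have said gives such a lower bound on the current $J$ along a sequence $s_n\to\infty$: a priori the solutions could degenerate with $J\to0$ without ever coinciding with a trivial solution. The paper's corresponding step (Lemma \ref{lem(b)}) is genuinely delicate: it passes to a weak-star limit, localizes the first vanishing point of $\partial_xV^*+\lambda^*$, and uses the specific flatness of $h(E)\sim e^{-b/E}$ as $E\downarrow0$ in a nonlinear Gronwall estimate of the form $(\partial_xV^*+\lambda^*)^2(x)\le C\int\!\!\int e^{-b(\partial_xV^*+\lambda^*)^{-1}}$ to reach a contradiction, with a separate argument when $\partial_xV^*+\lambda^*\equiv0$. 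Your ``comparable order of vanishing at $x=0$'' remark does not substitute for this. Finally, note that in the paper the kernel is simple for \emph{every} $\lambda$; only transversality needs the almost-everywhere restriction, and even there the measure-zero argument is subtler than ``zero set of an analytic function,'' because $V_c^\dagger(a,b,\gamma)$ is only analytic after excising the sets where $g(V_c^\dagger)=0$ or $\partial D/\partial V_c(V_c^\dagger)=0$.
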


The sparking voltage $V_c^\dagger$ is the smallest positive root of a certain elementary function $D(\cdot)$,
which we call the {\it sparking function}.  
 We say that the  sparking voltage  {\it exists} for a given parameter triple $(a,b,\gamma)$ 
if $D$ has a positive root for any triple in a neighborhood of it.  
We call $V_c^\ddagger$ the {\it anti-sparking} voltage; it is a larger root of  $D(\cdot)$.  
The explicit {\it sparking function} $D$ is defined as follows.  
For brevity we first denote 
\begin{equation}  \label{gh} 
\la = \frac{V_c}L, \quad h(\la) = a \la e^{-b/\la}, \quad  g(\la L) = h(\la) - {\la^2}/{4}.   \end{equation}
Then let  $\mu=L\sqrt{-g(V_c)}$ and 
\begin{equation}  \label{spfu} 
D(V_c)= \frac{1}{2}  \left(e^{\mu}  +  e^{-\mu}\right)
+   \frac{V_c}{4 {\mu} }  \left(e^{\mu}  -  e^{-\mu} \right)
-   \frac{\gamma}{1+\gamma}  e^{\frac{V_c}{2}}.
\end{equation} 
Note that, even if $g(V_c)$ is positive, $D(V_c)$ is real.  
In case $g(V_c)$ vanishes, $D(V_c)$ is defined as the limit $\lim_{g(V_c)\to 0} D(V_c)$.  
Thus $D\in C((0,\infty);\mathbb R)$.  
Depending on $\ga, a, b$ and $L$, the sparking function $D$ may have no root, one root or several roots. 
If $D$ has a root,  the {\it sparking voltage} $V_c^\dagger>0$ is defined as the smallest one: 
\begin{equation}\lb{bp1}
V_c^\dagger := \inf \{{V_c>0}\,;\,D(V_c)=0\}.
\end{equation}
Sufficient conditions for $D$ to have one or more roots, or none, are given in  Appendix A.  

We prove Theorem \ref{mainthm0} by a local, and then a global, bifurcation argument.  
In Section 2 we set up the notation used in the analysis. 
In Section 3 we apply the well-known local bifurcation theorem. 
In particular, we prove that the nullspace and the range  of the linearized operator 
around any trivial solution is determined by the function $D$.  
A transversality condition is required in order to guarantee the local bifurcation.   
We prove in Lemma \ref{trans1} that this condition is valid for almost every $(a,b,\gamma)$.  
Then in Section 4 we apply a global bifurcation theorem 
to construct a {\it global} curve $\sK$ of steady solutions $(\rho_i,\rho_e, \Phi)$.  
The general properties of this global curve are given in Theorem \ref{Global1}. 
    The curve may include mathematical solutions with positive densities
as well as solutions with negative ``densities''.  
In Section 5 we restrict our attention to positive densities.  
Further analysis of the possible ways that the curve may ``terminate'' is then  provided.  
The main conclusion (as in Theorem \ref{mainthm0}) is given in Theorem  \ref{mainthm}.  
In case the voltage becomes unbounded, it is proven in Section 5 that the densities tend to zero. 

Appendix A is devoted to the sparking function \eqref{spfu}.  
It is shown that there is a sparking voltage 
{if either $a$ or $\gamma$ is large enough.}
In Appendix B we discuss the location of the sparking voltage \eqref{bp1}.

\section{History and Notation} 

We now briefly summarize the history of the model.  
Many models have been proposed to describe this phenomenon \cite{AB,DL1,DW,DT,Ku1,Ku2,LRE,Mo1}.  
In 1985 Morrow \cite{Mo1} was perhaps the first to provide a model of its detailed mechanism in 
terms of particle densities. 
The model  consists of continuity equations for the electrons and ions coupled to the Poisson equation 
for the electrostatic potential.  
For simplicity in this paper we consider only electrons and positive ions 
and we focus on the $\ga$ and $\al$ mechanisms.  
Various other mechanisms can occur, such as 'attachment' and 'recombination' as mentioned 
in Morrow's paper, which have a much smaller effect on the ionization. 
  
The interesting article \cite{DL1} of Degond and Lucquin-Desreux derives the model directly 
from the general Euler-Maxwell system by scaling assumptions, 
in particular by assuming a very small mass ratio between the electrons and ions. 
In an appropriate limit the Morrow model is obtained 
at the end of their paper in equations (160) and (163), which we have specialized to 
assume constant temperature and no neutral particles. 

Suzuki and Tani in \cite{ST1} gave the first mathematical analysis of the Morrow model.
Typical shapes of the cathode and anode 
in physical and numerical experiments are a sphere or a plate.
Therefore they proved the time-local solvability of an initial boundary value problem over 
domains with a pair of boundaries that are plates or spheres.
In another paper \cite{ST2} they did a deeper analysis of problem \er{Mmodel}, 
proving that there exists a certain threshold of voltage  
at which the trivial solution transitions from stable to unstable. 
This fact means that gas discharge can occur and continue 
for a voltage greater than the threshold.

In \cite{SS1} we considered the Morrow model with the $\al$-mechanism but without the 
$\ga$-mechanism.  The boundary condition \eqref{gamma} was replaced by 
the condition that $\rho_e=0$ at the cathode, which means that the electrons are simply repelled
by the cathode.  
For that simpler model the sparking voltage $V_c^\dagger$ is the smallest root of the function $g$ 
and the anti-sparking voltage  $V_c^\ddagger$ is the other root if it exists.  
We proved similarly that there is a global curve of steady solutions that starts at  $V_c^\dagger$ 
and either goes to infinity or is a half-loop that goes to  $V_c^\ddagger$.  
In that case we eliminated the alternative that the voltage may be unbounded.


Now we describe some notation that we use in the rest of the paper.  
For mathematical convenience  we rewrite the problem \er{Mmodel}
in terms of the new unknown function
\[
R_e:=\ro_e e^{\frac{V_c}{2L}x} .  
\] 
We decompose the electrostatic potential as
\[
 \Phi=V+\frac{V_c}{L}x. 
\]
Thus $\partial_x^2 V=\rho_i-e^{-\frac{V_c}{2L}x}R_e$ with the boundary conditions $V(0)=V(L)=0$.
As a result, 
from \eqref{Mmodel} we have the following system for stationary solutions: 
\begin{subequations}\lb{r0}
\begin{gather}
k_i\partial_x\left\{\left(\partial_x V+\frac{V_c}{L}\right)\rho_i\right\}
=k_eh\left(\partial_x V+\frac{V_c}{L}\right)e^{-\frac{V_c}{2L}x}R_e,
\lb{re1} \\
-k_e\partial_x^2 R_e -k_eg(V_c)R_e=k_ef_e[V_c,R_e,V],
\lb{re2} \\
\partial_x^2 V=\rho_i-e^{-\frac{V_c}{2L}x}R_e
\lb{re3} 
\end{gather}
with the boundary conditions
\begin{gather}
\rho_i(0)=R_e(0)=V(0)=V(L)=0,
\lb{rb1} \\
\partial_xR_e(L)+\left(\partial_x V(L)+\frac{V_c}{2L}\right)R_e(L)=
\gamma\frac{k_i}{k_e}e^{\frac{V_c}{2L}L}
\left(\partial_x V(L)+\frac{V_c}{L}\right)\ro_i(L), 
\lb{rb2}
\end{gather}
\end{subequations}
where the nonlinear term $f_e =  f_e[V_c,R_e,V]$ is defined as
\begin{align*}
f_e=(\partial_xV)\partial_x R_e 
-\frac{V_c}{2L}R_e\partial_xV+R_e\partial_x^2V
-\left[
h\left(\frac{V_c}{L}\right)-
h\left(\partial_xV+\frac{V_c}{L}\right)
\right]R_e.
\end{align*}

{
It is convenient to draw the graph of $g(V_c)$, which of course depends  
on the physical parameters $a$, $b$, and $L$.  
The function $g$ has at most one local maximum in $(0,\infty)$.  
\begin{figure}[htbp] \begin{minipage}{0.5\hsize}   
{\unitlength 0.1in
\begin{picture}( 29.1500, 19.9000)( -0.1100,-22.0000)
\put(1.8900,-15.2100){\makebox(0,0)[rt]{$O$}}%
\put(28.8300,-16.8600){\makebox(0,0){$V_c$}}%
%
{{%
\special{pn 13}%
\special{pa 288 2200}%
\special{pa 288 210}%
\special{fp}%
\special{sh 1}%
\special{pa 288 210}%
\special{pa 268 278}%
\special{pa 288 264}%
\special{pa 308 278}%
\special{pa 288 210}%
\special{fp}%
}}%
%
{{%
\special{pn 13}%
\special{pa 62 1438}%
\special{pa 2904 1438}%
\special{fp}%
\special{sh 1}%
\special{pa 2904 1438}%
\special{pa 2836 1418}%
\special{pa 2850 1438}%
\special{pa 2836 1458}%
\special{pa 2904 1438}%
\special{fp}%
}}%
{{%
\special{pn 13}%
\special{pn 13}%
\special{pa 290 2201}%
\special{pa 290 2187}%
\special{ip}%
\special{pa 290 2153}%
\special{pa 290 2140}%
\special{ip}%
\special{pa 290 2106}%
\special{pa 290 2092}%
\special{ip}%
\special{pa 290 2058}%
\special{pa 290 2044}%
\special{ip}%
\special{pa 290 2010}%
\special{pa 290 1997}%
\special{ip}%
\special{pa 290 1963}%
\special{pa 290 1949}%
\special{ip}%
\special{pa 290 1915}%
\special{pa 290 1901}%
\special{ip}%
\special{pa 290 1868}%
\special{pa 290 1854}%
\special{ip}%
\special{pa 290 1820}%
\special{pa 290 1806}%
\special{ip}%
\special{pa 290 1772}%
\special{pa 290 1759}%
\special{ip}%
\special{pa 290 1725}%
\special{pa 290 1711}%
\special{ip}%
\special{pa 290 1677}%
\special{pa 290 1663}%
\special{ip}%
\special{pa 290 1630}%
\special{pa 290 1616}%
\special{ip}%
\special{pa 290 1582}%
\special{pa 290 1568}%
\special{ip}%
\special{pa 290 1534}%
\special{pa 290 1520}%
\special{ip}%
\special{pa 290 1487}%
\special{pa 290 1473}%
\special{ip}%
\special{ip}%
\special{pa 290 1439}%
\special{pa 306 1439}%
\special{pa 310 1440}%
\special{pa 320 1441}%
\special{pa 326 1442}%
\special{pa 336 1444}%
\special{pa 340 1445}%
\special{pa 350 1448}%
\special{pa 390 1464}%
\special{pa 406 1472}%
\special{pa 420 1480}%
\special{pa 426 1483}%
\special{pa 466 1506}%
\special{pa 470 1509}%
\special{pa 490 1519}%
\special{pa 520 1531}%
\special{pa 526 1533}%
\special{pa 530 1534}%
\special{pa 550 1538}%
\special{pa 556 1539}%
\special{pa 586 1539}%
\special{pa 596 1538}%
\special{pa 600 1537}%
\special{pa 606 1536}%
\special{pa 610 1535}%
\special{pa 616 1534}%
\special{pa 620 1532}%
\special{pa 626 1530}%
\special{pa 646 1522}%
\special{pa 656 1517}%
\special{pa 660 1514}%
\special{pa 676 1505}%
\special{pa 680 1502}%
\special{pa 686 1498}%
\special{pa 690 1495}%
\special{pa 696 1491}%
\special{pa 716 1475}%
\special{pa 720 1471}%
\special{pa 726 1466}%
\special{pa 730 1462}%
\special{pa 736 1457}%
\special{pa 766 1427}%
\special{pa 770 1422}%
\special{pa 776 1416}%
\special{pa 780 1411}%
\special{pa 786 1405}%
\special{pa 836 1345}%
\special{pa 840 1339}%
\special{pa 846 1332}%
\special{pa 850 1326}%
\special{pa 856 1319}%
\special{pa 860 1313}%
\special{pa 866 1306}%
\special{pa 876 1293}%
\special{pa 880 1286}%
\special{pa 896 1266}%
\special{pa 900 1259}%
\special{pa 960 1175}%
\special{pa 966 1168}%
\special{pa 1016 1097}%
\special{pa 1020 1090}%
\special{pa 1060 1034}%
\special{pa 1066 1027}%
\special{pa 1096 986}%
\special{pa 1100 979}%
\special{pa 1110 966}%
\special{pa 1116 959}%
\special{pa 1126 946}%
\special{pa 1130 939}%
\special{pa 1136 933}%
\special{pa 1146 920}%
\special{pa 1150 914}%
\special{pa 1156 907}%
\special{pa 1160 901}%
\special{pa 1220 829}%
\special{pa 1226 823}%
\special{pa 1236 812}%
\special{pa 1240 806}%
\special{pa 1246 801}%
\special{pa 1256 790}%
\special{pa 1260 785}%
\special{pa 1306 740}%
\special{pa 1310 735}%
\special{pa 1320 726}%
\special{pa 1326 721}%
\special{pa 1330 717}%
\special{pa 1346 704}%
\special{pa 1350 700}%
\special{pa 1370 684}%
\special{pa 1376 680}%
\special{pa 1386 673}%
\special{pa 1390 669}%
\special{pa 1396 666}%
\special{pa 1400 662}%
\special{pa 1406 659}%
\special{pa 1446 635}%
\special{pa 1450 632}%
\special{pa 1470 622}%
\special{pa 1476 620}%
\special{pa 1506 608}%
\special{pa 1510 606}%
\special{pa 1516 605}%
\special{pa 1520 603}%
\special{pa 1526 602}%
\special{pa 1536 599}%
\special{pa 1540 598}%
\special{pa 1560 594}%
\special{pa 1566 593}%
\special{pa 1586 591}%
\special{pa 1626 591}%
\special{pa 1630 592}%
\special{pa 1636 592}%
\special{pa 1640 593}%
\special{pa 1650 594}%
\special{pa 1656 595}%
\special{pa 1666 597}%
\special{pa 1670 598}%
\special{pa 1680 601}%
\special{pa 1686 602}%
\special{pa 1690 604}%
\special{pa 1696 605}%
\special{pa 1700 607}%
\special{pa 1726 617}%
\special{pa 1756 632}%
\special{pa 1760 635}%
\special{pa 1766 638}%
\special{pa 1770 641}%
\special{pa 1786 651}%
\special{pa 1790 654}%
\special{pa 1796 658}%
\special{pa 1836 690}%
\special{pa 1840 694}%
\special{pa 1846 699}%
\special{pa 1886 739}%
\special{pa 1890 744}%
\special{pa 1896 750}%
\special{pa 1930 792}%
\special{pa 1936 798}%
\special{pa 1940 805}%
\special{pa 1950 818}%
\special{pa 1956 825}%
\special{pa 1970 846}%
\special{pa 1976 853}%
\special{pa 1996 883}%
\special{pa 2000 891}%
\special{pa 2020 923}%
\special{pa 2026 931}%
\special{pa 2030 940}%
\special{pa 2036 948}%
\special{pa 2040 957}%
\special{pa 2076 1020}%
\special{pa 2080 1029}%
\special{pa 2086 1039}%
\special{pa 2120 1109}%
\special{pa 2126 1119}%
\special{pa 2130 1130}%
\special{pa 2170 1218}%
\special{pa 2180 1241}%
\special{pa 2186 1253}%
\special{pa 2206 1301}%
\special{pa 2210 1313}%
\special{pa 2216 1326}%
\special{pa 2220 1338}%
\special{pa 2226 1351}%
\special{pa 2260 1442}%
\special{pa 2270 1469}%
\special{pa 2276 1483}%
\special{pa 2296 1539}%
\special{pa 2300 1553}%
\special{pa 2310 1582}%
\special{pa 2350 1702}%
\special{pa 2356 1717}%
\special{pa 2360 1733}%
\special{pa 2386 1813}%
\special{pa 2390 1829}%
\special{pa 2400 1862}%
\special{pa 2436 1981}%
\special{pa 2440 1998}%
\special{pa 2446 2016}%
\special{pa 2450 2033}%
\special{pa 2456 2051}%
\special{pa 2470 2105}%
\special{pa 2476 2123}%
\special{pa 2496 2197}%
\special{pa 2496 2201}%
\special{fp}%
}}%
\put(23.0000,-7.5000){\makebox(0,0){$g(V_c)$}}%
\end{picture}}
{\unitlength 0.1in
\begin{picture}( 28.7900, 19.9000)( -0.1400,-22.0000)
\put(1.8600,-15.2100){\makebox(0,0)[rt]{$O$}}%
\put(28.4300,-16.8600){\makebox(0,0){$V_c$}}%
%
{{%
\special{pn 13}%
\special{pa 284 2200}%
\special{pa 284 210}%
\special{fp}%
\special{sh 1}%
\special{pa 284 210}%
\special{pa 264 278}%
\special{pa 284 264}%
\special{pa 304 278}%
\special{pa 284 210}%
\special{fp}%
}}%
%
{{%
\special{pn 13}%
\special{pa 62 1438}%
\special{pa 2864 1438}%
\special{fp}%
\special{sh 1}%
\special{pa 2864 1438}%
\special{pa 2796 1418}%
\special{pa 2810 1438}%
\special{pa 2796 1458}%
\special{pa 2864 1438}%
\special{fp}%
}}%
{{%
\special{pn 13}%
\special{pa 290 1441}%
\special{pa 310 1441}%
\special{pa 316 1442}%
\special{pa 326 1442}%
\special{pa 330 1443}%
\special{pa 336 1444}%
\special{pa 340 1444}%
\special{pa 346 1445}%
\special{pa 386 1453}%
\special{pa 390 1454}%
\special{pa 400 1457}%
\special{pa 406 1458}%
\special{pa 410 1460}%
\special{pa 420 1463}%
\special{pa 426 1465}%
\special{pa 440 1470}%
\special{pa 446 1472}%
\special{pa 480 1486}%
\special{pa 486 1488}%
\special{pa 510 1499}%
\special{pa 516 1501}%
\special{pa 540 1512}%
\special{pa 546 1514}%
\special{pa 590 1532}%
\special{pa 596 1534}%
\special{pa 616 1541}%
\special{pa 620 1543}%
\special{pa 626 1544}%
\special{pa 630 1546}%
\special{pa 650 1552}%
\special{pa 656 1553}%
\special{pa 666 1556}%
\special{pa 670 1557}%
\special{pa 716 1566}%
\special{pa 720 1567}%
\special{pa 730 1568}%
\special{pa 736 1569}%
\special{pa 746 1570}%
\special{pa 750 1571}%
\special{pa 756 1571}%
\special{pa 766 1572}%
\special{pa 770 1572}%
\special{pa 780 1573}%
\special{pa 846 1573}%
\special{pa 860 1572}%
\special{pa 866 1572}%
\special{pa 876 1571}%
\special{pa 880 1571}%
\special{pa 890 1570}%
\special{pa 896 1570}%
\special{pa 900 1569}%
\special{pa 906 1569}%
\special{pa 910 1568}%
\special{pa 916 1568}%
\special{pa 920 1567}%
\special{pa 930 1566}%
\special{pa 936 1565}%
\special{pa 946 1564}%
\special{pa 950 1563}%
\special{pa 960 1562}%
\special{pa 966 1561}%
\special{pa 976 1560}%
\special{pa 980 1559}%
\special{pa 1000 1556}%
\special{pa 1006 1555}%
\special{pa 1026 1552}%
\special{pa 1030 1551}%
\special{pa 1050 1548}%
\special{pa 1116 1537}%
\special{pa 1136 1534}%
\special{pa 1140 1533}%
\special{pa 1156 1531}%
\special{pa 1160 1530}%
\special{pa 1170 1529}%
\special{pa 1176 1528}%
\special{pa 1190 1526}%
\special{pa 1196 1525}%
\special{pa 1200 1525}%
\special{pa 1206 1524}%
\special{pa 1216 1523}%
\special{pa 1220 1522}%
\special{pa 1226 1522}%
\special{pa 1230 1521}%
\special{pa 1236 1521}%
\special{pa 1240 1520}%
\special{pa 1246 1520}%
\special{pa 1250 1519}%
\special{pa 1256 1519}%
\special{pa 1260 1518}%
\special{pa 1266 1518}%
\special{pa 1270 1517}%
\special{pa 1276 1517}%
\special{pa 1286 1516}%
\special{pa 1290 1516}%
\special{pa 1306 1515}%
\special{pa 1310 1515}%
\special{pa 1336 1514}%
\special{pa 1380 1514}%
\special{pa 1400 1515}%
\special{pa 1406 1515}%
\special{pa 1420 1516}%
\special{pa 1426 1516}%
\special{pa 1430 1517}%
\special{pa 1436 1517}%
\special{pa 1440 1518}%
\special{pa 1446 1518}%
\special{pa 1450 1519}%
\special{pa 1456 1519}%
\special{pa 1460 1520}%
\special{pa 1466 1520}%
\special{pa 1470 1521}%
\special{pa 1486 1523}%
\special{pa 1490 1524}%
\special{pa 1550 1536}%
\special{pa 1556 1537}%
\special{pa 1570 1541}%
\special{pa 1576 1542}%
\special{pa 1580 1544}%
\special{pa 1586 1545}%
\special{pa 1590 1547}%
\special{pa 1596 1548}%
\special{pa 1600 1550}%
\special{pa 1610 1553}%
\special{pa 1616 1555}%
\special{pa 1626 1558}%
\special{pa 1630 1560}%
\special{pa 1676 1578}%
\special{pa 1680 1580}%
\special{pa 1696 1587}%
\special{pa 1736 1607}%
\special{pa 1750 1615}%
\special{pa 1756 1618}%
\special{pa 1796 1642}%
\special{pa 1800 1645}%
\special{pa 1810 1652}%
\special{pa 1816 1655}%
\special{pa 1826 1662}%
\special{pa 1830 1665}%
\special{pa 1836 1669}%
\special{pa 1846 1676}%
\special{pa 1850 1680}%
\special{pa 1860 1687}%
\special{pa 1866 1691}%
\special{pa 1906 1723}%
\special{pa 1910 1727}%
\special{pa 1920 1736}%
\special{pa 1926 1740}%
\special{pa 1936 1749}%
\special{pa 1940 1753}%
\special{pa 1946 1758}%
\special{pa 1956 1767}%
\special{pa 1960 1772}%
\special{pa 2020 1832}%
\special{pa 2026 1837}%
\special{pa 2036 1848}%
\special{pa 2040 1853}%
\special{pa 2046 1859}%
\special{pa 2056 1870}%
\special{pa 2060 1876}%
\special{pa 2076 1893}%
\special{pa 2080 1899}%
\special{pa 2106 1929}%
\special{pa 2110 1935}%
\special{pa 2126 1954}%
\special{pa 2130 1960}%
\special{pa 2136 1967}%
\special{pa 2140 1973}%
\special{pa 2146 1980}%
\special{pa 2150 1986}%
\special{pa 2156 1993}%
\special{pa 2170 2013}%
\special{pa 2176 2020}%
\special{pa 2206 2062}%
\special{pa 2210 2069}%
\special{pa 2226 2091}%
\special{pa 2230 2098}%
\special{pa 2236 2106}%
\special{pa 2240 2113}%
\special{pa 2246 2121}%
\special{pa 2256 2136}%
\special{pa 2260 2144}%
\special{pa 2296 2199}%
\special{pa 2296 2201}%
\special{fp}%
}}%
\put(23.8000,-18.6000){\makebox(0,0){$g(V_c)$}}%
\end{picture}}

For the analysis in the rest of the paper it is convenient to 
write the system \eqref{r0}  as 

\begin{equation}\lb{sp0}
\sF_j(\lambda,\rho_i,R_e,V)=0 \qu \text{for $j=1,2,3,4$,}  
\end{equation}
where we denote $\lambda=V_c/L$ and 
\begin{align*}
\sF_1
:=&k_i\partial_x\left\{(\partial_x V+\lambda)\rho_i\right\}
-k_eh\left(\partial_x V+\lambda\right)e^{-\frac{\lambda}{2}x}R_e,
\\
\sF_2
:=&-\partial_x^2R_e 
-(\partial_x V) \partial_x R_e
+\left\{\frac{\lambda}{2}\partial_x V
-\partial_x^2V
+\frac{\lambda^2}{4}
-h\left(\partial_x V+\lambda\right)\right\}R_e,
\\
\sF_3
:=&\partial_x^2 V-\rho_i+e^{-\frac{\lambda}{2}x}R_e,
\\
\sF_4
:=&\partial_xR_e(L)+\left(\partial_x V(L)+\frac{\la}{2}\right)R_e(L)-
\gamma\frac{k_i}{k_e}e^{\frac{\la}{2}L}
\left(\partial_x V(L)+{\la}\right)\ro_i(L).
\end{align*}


\section {Bifurcation}\lb{S2}
In this section we apply the following well-known theorem  \cite{CR1} on bifurcation from a simple eigenvalue.  
Let $N(\sL)$ and $R(\sL)$ denote the nullspace and range of any linear operator $\sL$ between
two Banach spaces.
\begin{thm}     \lb{Local}
Let $X$ and $Y$ be Banach spaces,
$\sO$ be an open subset of ${\mathbb R}\times X$ and $\sF:\sO \to Y$ be a $C^2$ function. 
Suppose that
\begin{enumerate}[{(H}1{)}]
\item $(\lambda,0)\in \sO$ and $\sF(\lambda,0)=0$ for all $\lambda \in \mathbb R$;
\item for some $\lambda^* \in \mathbb R$, $N(\partial_u\sF(\lambda^*,0))$ and 
$Y\backslash R(\partial_u\sF(\lambda^*,0))$ are one-dimensional, 
with the null space generated by $u^*$, which satisfies  the transversality condition
\begin{equation*} 
\partial_\la\partial_u \sF(\lambda^*,0)(1,u^*)\notin R(\partial_u\sF(\lambda^*,0)),  
\end{equation*}  
where $\partial_u$ and $\partial_\la\partial_u$ denote Fr\'echet derivatives for $(\la,u) \in \sO$.
\end{enumerate}
Then there exists in $\sO$ a continuous curve 
${\sK}=\{(\la(s),u(s));s \in \mathbb R\}$
of solutions of the equation $\sF(\lambda,u)=0$ such that:
\begin{enumerate}[{(C}1{)}]
\item $(\la(0),u(0))=(\la^*,0)$;
\item $u(s)=su^*+o(s)$ in $X$  as $s \to 0$;
\item there exists a neighborhood $\sW$ of $(\la^*,0)$
and $\ve > 0$ sufficiently small such that
\begin{equation*}
\{(\la,u)\in \sW ; u\neq 0 \text{ and } \sF(\la,u)=0 \}
=\{(\la(s),u(s)) ; 0<|s|<\ve\}.
\end{equation*}
\end{enumerate}
\end{thm}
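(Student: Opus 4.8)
\quad My plan is to prove this by the classical Lyapunov--Schmidt reduction, combined with two applications of the implicit function theorem. Write $\sL:=\partial_u\sF(\la^*,0)$. By (H2) the one-dimensional nullspace $N(\sL)=\spn\{u^*\}$ has a closed complement $Z$ in $X$ (for instance $Z=\ker\ell$ for any $\ell\in X^*$ with $\ell(u^*)=1$), and $R(\sL)$ is closed of codimension one, so we may fix $\ps_0\in Y\setminus R(\sL)$ with $Y=R(\sL)\oplus\spn\{\ps_0\}$ and let $Q\colon Y\to R(\sL)$ be the continuous projection along $\spn\{\ps_0\}$. Every $u\in X$ near $0$ is uniquely $u=su^*+z$ with $s\in\mathbb R$, $z\in Z$, and the equation $\sF(\la,u)=0$ is equivalent to the pair
\[
Q\sF(\la,su^*+z)=0,\qquad (I-Q)\sF(\la,su^*+z)=0 .
\]

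First I would dispose of the first (``auxiliary'') equation. Set $\Psi(\la,s,z):=Q\sF(\la,su^*+z)$; then $\Psi(\la^*,0,0)=0$, and $\partial_z\Psi(\la^*,0,0)=\sL|_Z\colon Z\to R(\sL)$ is a bounded bijection (injective because $N(\sL)\cap Z=\{0\}$, onto because $\sL X=R(\sL)$), hence an isomorphism. The implicit function theorem yields a $C^2$ map $z=z(\la,s)$ near $(\la^*,0)$, with values in $Z$, such that $z(\la^*,0)=0$ and $Q\sF(\la,su^*+z(\la,s))\equiv 0$; since $\sF(\la,0)=0$ for all $\la$ by (H1), uniqueness forces $z(\la,0)\equiv 0$. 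Substituting into the second equation and identifying $\spn\{\ps_0\}$ with $\mathbb R$, put $\Theta(\la,s):=(I-Q)\sF(\la,su^*+z(\la,s))$; this is $C^2$ with $\Theta(\la,0)\equiv 0$, so $\Theta(\la,s)=s\,\theta(\la,s)$ where $\theta(\la,s):=\int_0^1\partial_s\Theta(\la,ts)\,dt$ is of class $C^1$. The whole problem is now reduced to the scalar equation $\theta(\la,s)=0$.

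To finish I would apply the implicit function theorem to $\theta$. Differentiating $Q\sF(\la,su^*+z(\la,s))\equiv 0$ in $s$ at $(\la^*,0)$ and using $\sL u^*=0$ gives $\sL\,\partial_s z(\la^*,0)=0$ with $\partial_s z(\la^*,0)\in Z$, hence $\partial_s z(\la^*,0)=0$; thus $\theta(\la^*,0)=\partial_s\Theta(\la^*,0)=(I-Q)\sL u^*=0$. Next, differentiating $\partial_s\Theta(\la,s)=(I-Q)\,\partial_u\sF(\la,su^*+z)(u^*+\partial_s z)$ in $\la$ at $(\la^*,0)$: the term carrying the factor $\partial_\la z(\la^*,0)=0$ drops out, and the term $(I-Q)\sL\,\partial_\la\partial_s z(\la^*,0)$ vanishes because $\partial_\la\partial_s z(\la^*,0)\in Z$ while $(I-Q)\sL|_Z=0$, leaving
\[
\partial_\la\theta(\la^*,0)=(I-Q)\,\partial_\la\partial_u\sF(\la^*,0)(1,u^*),
\]
which is nonzero by exactly the transversality condition in (H2). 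The implicit function theorem then gives a $C^1$ function $\la=\la(s)$ near $s=0$ with $\la(0)=\la^*$ and $\theta(\la(s),s)\equiv 0$. Putting $u(s):=su^*+z(\la(s),s)$, the curve $\sK=\{(\la(s),u(s))\}$ solves $\sF(\la,u)=0$, since $Q\sF=0$ by construction of $z$ and $(I-Q)\sF=\Theta(\la(s),s)=s\,\theta(\la(s),s)=0$. Then (C1) is immediate; (C2) holds because $z(\la,0)\equiv 0$ and $\partial_s z(\la^*,0)=0$ give $z(\la(s),s)=s\int_0^1\partial_s z(\la(s),ts)\,dt=o(s)$; and (C3) follows from the uniqueness clauses of the two implicit-function-theorem steps --- in a small enough $\sW$, any solution with $u\ne 0$ has $s\ne 0$ (otherwise $u=z(\la,0)=0$), which forces $z=z(\la,s)$, then $\theta(\la,s)=0$, then $\la=\la(s)$.

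The one genuinely delicate point, which I would treat most carefully, is the regularity bookkeeping behind the division by $s$: the hypothesis $\sF\in C^2$ is used precisely to guarantee that $z(\la,s)$ is $C^2$, hence $\Theta$ is $C^2$ and the reduced scalar map $\theta=\Theta/s$ (extended by $\partial_s\Theta(\la,0)$ at $s=0$) is genuinely $C^1$ --- just enough to invoke the implicit function theorem in the last step and to read off the expansion in (C2). Everything else is bookkeeping with the projections $Q,\,I-Q$ and with the splittings $X=\spn\{u^*\}\oplus Z$ and $Y=R(\sL)\oplus\spn\{\ps_0\}$; the transversality hypothesis of (H2) enters exactly once, in verifying $\partial_\la\theta(\la^*,0)\ne0$, and it is essential there.
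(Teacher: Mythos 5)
Your proof is correct. Note that the paper does not prove this theorem at all --- it is quoted verbatim as the classical Crandall--Rabinowitz bifurcation theorem from the cited reference [CR1] --- and your Lyapunov--Schmidt reduction (solving the projected equation for $z(\la,s)$ by the implicit function theorem, dividing the complementary scalar equation by $s$, and using transversality to solve $\theta(\la,s)=0$ for $\la(s)$) is essentially the argument given in that reference, with the regularity bookkeeping and the local-uniqueness clause (C3) handled properly.
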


In order to apply the theorem to our situation, we use the notation $u=(\rho_i,R_e,V)$ and 
we define the two spaces 
\begin{align*}
X: \ &\rho_i \in \{f\in C^1([0,L]);f(0)=0\}, \ \ R_e \in \{f\in C^2([0,L]);f(0)=0\},
\\
& V \in \{f\in C^3([0,L]);f(0)=f(L)=0\};
\\
Y: \ &\sF_1 \in C^0([0,L]), \ \ \sF_2 \in C^0([0,L]),
\ \ \sF_3 \in C^1([0,L]), \ \ \sF_4 \in {\mathbb R}
\end{align*}  
and the sets
\begin{align*}  
\sO:=&\{(\lambda,\rho_i,R_e,V)\in (0,\infty)\times X;\ \partial_x V+\lambda>0\} \ 
=\ \bigcup_{j\in \bbn} \sO_j ,\text{ where } 
\\
\sO_j:=&\{(\lambda,\rho_i,R_e,V)\in (0,\infty)\times X; \ 
\lambda+\|(\rho_i,R_e,V)\|_{X}\leq j ,  \  
\lambda\geq \tfrac1j,  \ \partial_x V+\lambda\geq \tfrac1j\}.
\end{align*}  
Note that $\sO$ is an open set and each $\sO_j$ is a closed bounded subset of $\sO$.
Furthermore, the $\sF_j$ are real-analytic operators because they are 
polynomials in $(\lambda,\rho_i, R_e, V)$ and their $x$-derivatives, 
except for the factor $h(\pa_xV+\lambda)$.  However, $\pa_xV+\lambda > 0$ 
in $\sO$ and the function $s\to h(s)$ is analytic for $s>0$.
Hypothesis $(H1)$ is obvious.  
The local bifurcation condition $(H2)$ is verified in Lemmas \ref{null1}--\ref{trans1}.

\begin{lem}\lb{null1}
Recall that $\la=V_c/L$.  Let $\sL= \partial_{(\rho_i,R_e,V)}\sF(\lambda,0,0,0)$ be the linearized operator around 
a trivial solution and let $N(\sL)$ be its nullspace. Then 

\begin{enumerate}[{(a)}]

\item $N(\sL)$ is at most one-dimensional 
for any $\lambda>0$.

\item $N(\sL)$ is one-dimensional if and only if $D(V_c)=0$.  
 Thus the sparking voltage exists.

\item  $N(\sL)$ has a  basis $(\varphi_i,\varphi_e,\varphi_v)$ with 
\begin{equation}\lb{positive0}
\varphi_i(x)>0, \quad \varphi_e(x)>0 \quad \text{for $x\in (0,L]$} 
\end{equation}
if and only if 
\begin{equation}\lb{positive1}
D(V_c)=0, \quad g(V_c) < \frac{\pi^2}{L^2}. 
\end{equation}

\item $V_c^\dagger$ defined in \er{bp1} satisfies \er{positive1}.  
\end{enumerate}
\end{lem}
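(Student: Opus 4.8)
The plan is to compute the linearized operator $\sL$ explicitly and reduce the nullspace equation to a scalar second-order ODE with two boundary conditions, whose solvability is governed exactly by the function $D$. First I would linearize $\sF_1,\dots,\sF_4$ at the trivial solution $(\rho_i,R_e,V)=(0,0,0)$. Since the trivial solution has $\rho_i=R_e=0$ and $V=0$, the linearized components $(\dot\rho_i,\dot R_e,\dot V)$ satisfy: from $\sF_3$, $\partial_x^2\dot V = \dot\rho_i - e^{-\lambda x/2}\dot R_e$; from $\sF_2$, the decoupled equation $-\partial_x^2\dot R_e + \bigl(\tfrac{\lambda^2}{4}-h(\lambda)\bigr)\dot R_e = 0$, i.e.\ $-\partial_x^2\dot R_e = g(V_c)\dot R_e$ with $\dot R_e(0)=0$; from $\sF_1$ (after using $\partial_x(\lambda\dot\rho_i)=\lambda\partial_x\dot\rho_i$ since the term $(\partial_x\dot V)\rho_i$ vanishes at the trivial solution), the first-order equation $k_i\lambda\,\partial_x\dot\rho_i = k_e h(\lambda) e^{-\lambda x/2}\dot R_e$ with $\dot\rho_i(0)=0$. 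The key observation is that $\sF_2$ already fixes $\dot R_e$ up to one scalar multiple: $\dot R_e = c\,\psi(x)$ where $\psi$ solves $-\psi''=g\psi$, $\psi(0)=0$ — explicitly $\psi(x)=\sinh(x\sqrt{-g})/\sqrt{-g}$ when $g<0$, $\psi(x)=\sin(x\sqrt g)/\sqrt g$ when $g>0$, and $\psi(x)=x$ when $g=0$ (these are the three cases embedded in the definition of $D$). Once $c$ is chosen, $\dot\rho_i$ is determined by integration from its boundary value, and then $\dot V$ is determined by solving its Poisson equation with $\dot V(0)=\dot V(L)=0$. So $N(\sL)$ is at most one-dimensional, which gives (a). The one remaining constraint — whether a genuine nonzero solution exists — is the fourth boundary condition $\sF_4$, which couples $\partial_x\dot R_e(L)$, $\dot R_e(L)$, $\partial_x\dot V(L)$, $\dot R_e(L)$ and $\dot\rho_i(L)$; substituting the explicit formulas I would verify that $\sF_4=0$ becomes, after simplification, precisely $c\cdot(\text{const})\cdot D(V_c)=0$. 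This proves (b): $N(\sL)$ is one-dimensional iff $D(V_c)=0$, and in that case a sparking voltage exists by definition.

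For part (c), with $D(V_c)=0$ fixed I would examine the sign of the basis element $(\varphi_i,\varphi_e,\varphi_v)$. The function $\varphi_e=c\psi$ is positive on $(0,L]$ precisely when $\psi$ does not vanish on $(0,L]$; in the case $g\le 0$ this is automatic, and in the case $g>0$ it holds iff $L\sqrt g<\pi$, i.e.\ $g(V_c)<\pi^2/L^2$ — this is exactly condition \er{positive1}. Having $\varphi_e>0$, the equation $k_i\lambda\,\varphi_i' = k_e h(\lambda) e^{-\lambda x/2}\varphi_e$ shows $\varphi_i'>0$ on $(0,L]$, and since $\varphi_i(0)=0$ we get $\varphi_i>0$ on $(0,L]$. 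Conversely, if $g(V_c)\ge\pi^2/L^2$ then $\psi$ has a zero in $(0,L]$, so $\varphi_e$ changes sign there and \er{positive0} fails. This establishes the equivalence in (c). Part (d) then follows because, at the smallest root $V_c^\dagger$, we must have $g(V_c^\dagger)<\pi^2/L^2$: indeed I would argue that if $g(V_c)\ge\pi^2/L^2$ at some root then $\mu=L\sqrt{-g}$ is purely imaginary with $|\mathrm{Im}\,\mu|\ge\pi$, and in that regime a direct sign analysis of the three terms of $D$ in \er{spfu} (the cosine term $\cos|\mathrm{Im}\,\mu|$ has already completed at least a half period) shows $D$ must have had a smaller root — more cleanly, one shows $D(V_c)>0$ for all $V_c$ small enough that $g(V_c)<\pi^2/L^2$ cannot yet have been violated, and the first sign change of $D$ necessarily occurs while $g<\pi^2/L^2$ still holds because $g$ is continuous with $g(0^+)\le 0$ and has at most one local maximum.

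I expect the main obstacle to be part (d): parts (a)–(c) are essentially bookkeeping once the linearization is written down, but (d) requires genuinely understanding why the \emph{first} root of $D$ cannot occur in the ``oscillatory'' regime $g\ge\pi^2/L^2$. The subtle point is that $D$ as written in \er{spfu} is real-analytic through the transition $g=0$ (the $e^{\pm\mu}$ combination becomes trigonometric), so one cannot simply say "the sparking voltage is below the first zero of $g$" as in the simpler model of \cite{SS1}; instead I would track the monotonicity/sign of $D$ on the interval where $g(V_c)<\pi^2/L^2$, using that on this interval $\mu^2=-L^2 g\in(-\infty,0]\cup(0,\pi^2)$ so that the first two terms of $D$, namely $\cosh\mu + \tfrac{V_c}{2\mu}\sinh\mu$ (interpreted as $\cos|\mu| + \tfrac{V_c}{2|\mu|}\sin|\mu|$ when $g>0$), stay strictly positive, while the third term $-\tfrac{\gamma}{1+\gamma}e^{V_c/2}$ is the only negative contribution; then $D(0^+)=1-\tfrac{\gamma}{1+\gamma}>0$ and any first zero is forced to lie in this region. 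Appendix B (referenced in the excerpt as discussing the location of the sparking voltage) presumably carries out exactly this estimate, so I would cite it for the quantitative bound and keep the proof of (d) short.
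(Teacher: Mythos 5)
Parts (a)--(c) of your proposal follow essentially the same route as the paper: linearize at the trivial solution, observe that the $R_e$-equation decouples into $-\partial_x^2 S_e = g\,S_e$ with $S_e(0)=0$, so the nullspace is spanned by the explicit $\sinh$/linear/$\sin$ solution; the remaining boundary condition $\sF_4=0$ (after expressing $S_i$ as an integral of $S_e$) reduces to a scalar condition equivalent to $D(V_c)=0$; and positivity of the basis is read off from the explicit formula, with $g(V_c)<\pi^2/L^2$ exactly the condition that $\sin(\sqrt{g}\,x)$ has no node in $(0,L]$. That is all correct in outline, modulo actually carrying out the computation showing that $\sF_4=0$ is equivalent to $D(V_c)=0$ in each of the three cases.

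Part (d), however, contains a genuine gap. Your ``cleaner'' argument rests on the claim that for $0<g(V_c)<\pi^2/L^2$ the first two terms of $D$, namely $\cos(L\sqrt{g})+\frac{V_c}{2L\sqrt{g}}\sin(L\sqrt{g})$, stay strictly positive. This is false: as $L\sqrt{g}\to\pi^{-}$ the sine term vanishes while the cosine term tends to $-1$, so the sum of the first two terms tends to $-1$ and is already negative for $L\sqrt{g}$ near $\pi$. Moreover, even if that positivity held, knowing the signs of the individual terms (two positive, one negative) says nothing about the sign of $D$, so it cannot force the first zero of $D$ into the region $g<\pi^2/L^2$; your assertion that ``the first sign change of $D$ necessarily occurs while $g<\pi^2/L^2$ still holds'' is precisely the statement to be proved and does not follow from $g$ having at most one local maximum. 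The failure of positivity at the boundary is in fact the engine of the correct proof: assume for contradiction $g(V_c^\dagger)\ge\pi^2/L^2$, let $V_c^*\le V_c^\dagger$ be the first point with $g(V_c^*)=\pi^2/L^2$; there $D(V_c^*)=-1-\frac{\gamma}{1+\gamma}e^{V_c^*/2}<0$, while $\lim_{V_c\to0}D(V_c)=\frac{1}{1+\gamma}>0$, so the intermediate value theorem yields a root of $D$ in $(0,V_c^*)$, contradicting the minimality of $V_c^\dagger$. Finally, deferring the estimate to Appendix~\ref{SB} does not repair this: that appendix locates $V_c^\dagger$ only under extra hypotheses on $\gamma$ and itself builds on the evaluation of $D$ at $V_c^*$, so citing it here would be circular.
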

\begin{proof}
We remark that the positivity \eqref{positive0} will lead to the positivity of $R_e$ and $\rho_i$ 
in the local bifurcation proof.

(a) If $(S_i,S_e,W) \in N(\sL) \subset X$,
then $(S_i,S_e,W)$ solves 
\begin{align}
\partial_{(\rho_i,R_e,V)}\sF_1(\lambda,0,0,0)[S_i,S_e,W]&=
k_i\lambda\partial_xS_i
-k_eh\left(\lambda\right)e^{-\frac{\lambda}{2}x}S_e=0,
\lb{LinearEq1}\\
\partial_{(\rho_i,R_e,V)}\sF_2(\lambda,0,0,0)[S_i,S_e,W]&=
-\partial_x^2S_e-g(\lambda L)S_e=0,
\lb{LinearEq2}\\
\partial_{(\rho_i,R_e,V)}\sF_3(\lambda,0,0,0)[S_i,S_e,W]&=
\partial_x^2 W-S_i+e^{-\frac{\lambda}{2}x}S_e=0,
\lb{LinearEq3}\\
\partial_{(\rho_i,R_e,V)}\sF_4(\lambda,0,0,0)[S_i,S_e,W]&=
\partial_xS_e(L)+\frac{\la}{2}S_e(L)-
\gamma\frac{k_i}{k_e}\lambda e^{\frac{\la}{2}L}S_i(L)=0.
\lb{LinearEq4}
\end{align}
Solving \er{LinearEq1} with $S_i(0)=0$, we have
\begin{equation}\lb{Sol1}
\frac{k_i}{k_e}\lambda S_i(x)
=h\left(\lambda\right)\int_0^x e^{-\frac{\lambda}{2}y}S_e(y)\,dy.
\end{equation}
By \er{Sol1} with $x=L$,
we rewrite the boundary condition \er{LinearEq4} so that
\begin{equation}\lb{LinearBC1}
\partial_xS_e(L)+\frac{\la}{2}S_e(L)=
\gamma h\left(\lambda\right)e^{\frac{\lambda}{2}L}
\int_0^L e^{-\frac{\lambda}{2}y}S_e(y)\,dy,
\end{equation}
which is closed with respect to $S_e$.
Therefore, we have a differential equation for $S_e$ 
with two boundary conditions.  
It suffices to solve it in order to obtain all elements of the nullspace. 
Indeed, $S_i$ is obtained by \er{Sol1} and $S_e$ and 
$W$ is obtained by solving  \er{LinearEq3} with $W(0)=W(L)=0$.
The general solutions of the second order equation \er{LinearEq2} with $S_e(0)=0$ are 
\begin{equation}\lb{Sol2}
S_e(x)=\left\{
\begin{array}{ll}
A \sinh\sqrt{-g(\lambda L)}x & \text{if $g(\lambda L)<0$},
\\
A x & \text{if $g(\lambda L)=0$},
\\
A \sin\sqrt{g(\lambda L)} x & \text{if $g(\lambda L)>0$},
\end{array}
\right.
\end{equation}
where we have also used the the boundary condition $S_e(0)=0$.
This fact means that the null space $N(\partial_{(\rho_i,R_e,V)}\sF(\lambda,0,0,0))$ 
is at most one-dimensional for any $\lambda>0$.

(b) We will show that
equation \er{LinearEq2} with $S_e(0)=0$ and \er{LinearBC1}
admits nontrivial solutions if and only if $D(V_c)=0$.  
 We write $g=g(\lambda L)$ and first treat the case $g<0$.
To this end, we substitute the general solution into \er{LinearBC1}
and see that a necessary and sufficient condition 
for the existence of nontrivial solutions is 
\begin{align*}
0=&\sqrt{-g}\cosh\sqrt{-g}L  
+\frac{\lambda}{2}\sinh\sqrt{-g}L 
-\gamma h\left(\lambda\right)e^{\frac{\lambda}{2}L}
\int_0^L e^{-\frac{\lambda}{2}y}\sinh\sqrt{-g}y \,dy
\\
=&(1+\gamma)\left\{\sqrt{-g}\cosh\sqrt{-g}L
+\frac{\lambda}{2}\sinh\sqrt{-g}L\right\}
+2\gamma \sqrt{-g}e^{\frac{\lambda}{2}L}.
\end{align*}
In deriving the last equality, we have also used the fact $g+\frac{\la^2}{4}=h(\lambda)$.
This equality is equivalent to $D(V_c)=0$.
Now we consider the case $g=0$. 
As above, we find the condition
\begin{align*}
0=1+\frac{\lambda}{2}L
-\gamma h\left(\lambda\right)e^{\frac{\lambda}{2}L}
\int_0^L e^{-\frac{\lambda}{2}y}y\,dy
=(1+\gamma)\left(1+\frac{\lambda}{2}L\right)
-\gamma e^{\frac{\lambda}{2}}.
\end{align*}
This too is equivalent to $D(V_c)=0$.
For the case $g>0$, we have 
\begin{align*}
0=&\sqrt{g}\cos\sqrt{g}L
+\frac{\lambda}{2}\sin\sqrt{g}L
-\gamma h\left(\lambda\right)e^{\frac{\lambda}{2}L}
\int_0^L e^{-\frac{\lambda}{2}y}\sin\sqrt{g}y\,dy
\\
=&(1+\gamma)\left(\sqrt{g}\cos\sqrt{g}L
+\frac{\lambda}{2}\sin\sqrt{g}L\right)
-\gamma \sqrt{g}e^{\frac{\lambda}{2}L}. 
\end{align*}
Once again this is equivalent to $D(V_c)=0$. 
Thus we conclude in all three cases that $N(\sL)$ is one-dimensional 
if and only if $D(V_c)=0$.  

(c) Furthermore, it is seen from \er{Sol2} that
the null space $N(\sL)$ has 
a basis with \er{positive0} if and only if \er{positive1} holds.

(d)  It remains to show that 
the sparking voltage $V_c^\dagger$ must satisfy \er{positive1}.
Suppose on the contrary that $g(V_c^\dagger) \geq \pi^2/L^2$ holds.
Then the graph of $g$ must be drawn as in Figure \ref{fig1}.
Therefore, there exists a positive constant $V_c^* \leq V_c^\dagger $ such that
$g(V_c) <  \pi^2/L^2$ for all $V_c \in [0,V_c^*)$
and $g(V_c^*)= \pi^2/L^2$. 
Evaluating $D(V_c)$ at $V_c=V_c^*$, we see
that 
\[
D(V_c^*)=\cos\sqrt{g(V_c^*)}L
+\frac{V_c^*}{2\sqrt{g(V_c^*)}L}\sin\sqrt{g(V_c^*)}L
-\frac{\gamma}{1+\gamma}e^{\frac{V_c^*}{2}}
=-1-\frac{\gamma}{1+\gamma}e^{\frac{V_c^*}{2}}<0.
\]
However,  $\lim_{V_c \to 0}D(V_c)=1/(1+\gamma)>0$.
These facts together with the intermediate value theorem means that
there exists $0<c_0<V_c^*$ such that $D(c_0)=0$, 
so that $V_c^\dagger$ is not the smallest root of $D$, which contradicts its definition.  
\end{proof}

In order to apply Theorem \ref{Local}, we define $\la^* = V_c^\dagger/L$ and we let 
$u^*=(\varphi_i^\dagger,\varphi_e^\dagger,\varphi_v^\dagger)$ denote
a basis of $N(\partial_{(\rho_i,R_e,V)}\sF(V_c^\dagger/L,0,0,0))$ that satisfies \er{positive0}.

\begin{lem}\lb{range1}
The quotient space $Y\backslash R(\sL)$ 
is at most one-dimensional. 
Furthermore, it is one-dimensional if and only if $D(V_c)=0$.
\end{lem}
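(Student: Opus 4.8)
The strategy parallels the analysis of the nullspace in Lemma \ref{null1}, but now works with the adjoint problem. The linearized operator $\sL = \partial_{(\rho_i,R_e,V)}\sF(\lambda,0,0,0)$ has components given in \er{LinearEq1}--\er{LinearEq4}. Since $\sL$ is (essentially) a compact perturbation of an invertible operator between the chosen Banach spaces — the principal parts being $k_i\lambda\partial_x$ on the $\rho_i$-equation, $-\partial_x^2$ on the $R_e$-equation, and $\partial_x^2$ on the $V$-equation, each invertible with the given boundary conditions — the operator $\sL$ is Fredholm of index zero. Hence $\dim\bigl(Y\backslash R(\sL)\bigr) = \dim N(\sL)$, and Lemma \ref{null1}(a),(b) immediately gives both claims. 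This is the cleanest route, and I expect it to work, but it requires checking that $\sL$ genuinely has the Fredholm structure claimed; in particular one must confirm that the scalar boundary-condition component $\sF_4$ (valued in $\mathbb R$) is accommodated correctly in the index count — this is the step I expect to be the main obstacle, since the mixed ODE/boundary-functional structure of $Y$ must be handled with care.

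Alternatively, to avoid relying on abstract Fredholm theory, I would argue directly. Given $(G_1,G_2,G_3,G_4)\in Y$, the equation $\sL[S_i,S_e,W]=(G_1,G_2,G_3,G_4)$ is solved in stages, mimicking the proof of part (a): first solve the first-order equation \er{LinearEq1} (now inhomogeneous with right side $G_1$) for $S_i$ in terms of $S_e$ using $S_i(0)=0$; substitute into the boundary condition to convert \er{LinearEq4} into a scalar constraint on $S_e$; then the remaining problem for $S_e$ is the inhomogeneous second-order equation $-\partial_x^2 S_e - g(\lambda L)S_e = G_2$ with the Dirichlet condition $S_e(0)=0$ and the nonlocal condition coming from \er{LinearBC1} with right-hand side involving $G_1,G_4$. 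This two-point problem for $S_e$ is solvable for all data precisely when the homogeneous version has only the trivial solution, i.e.\ precisely when $D(V_c)\neq 0$ by the computation in Lemma \ref{null1}(b); and when $D(V_c)=0$ the solvability fails by exactly one scalar condition (the Fredholm alternative for a regular two-point boundary value problem). Finally $W$ is recovered from \er{LinearEq3} with $W(0)=W(L)=0$ with no further obstruction. This shows $R(\sL)$ has codimension $0$ when $D(V_c)\neq 0$ and codimension $1$ when $D(V_c)=0$, which is the assertion.

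The delicate points in this direct approach are: (i) verifying that the map from $(G_1,G_2,G_3,G_4)$ to the single compatibility scalar is continuous and nontrivial, so that $Y\backslash R(\sL)$ is genuinely one-dimensional rather than possibly larger; (ii) checking the regularity bookkeeping, namely that $S_i\in C^1$, $S_e\in C^2$, $W\in C^3$ are obtained from $G_1,G_2\in C^0$, $G_3\in C^1$, $G_4\in\mathbb R$, which follows from the explicit integral representations; and (iii) handling the three cases $g(\lambda L)<0$, $=0$, $>0$ uniformly, exactly as in Lemma \ref{null1}, using the explicit fundamental solutions in \er{Sol2}. I would present the Fredholm argument as the main proof and remark that the direct computation gives the explicit compatibility condition; the one-dimensionality of the cokernel then follows from $\dim N(\sL)\le 1$ together with the index-zero property.
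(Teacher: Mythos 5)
Your proposal is correct in substance but takes a genuinely different route from the paper. The paper does \emph{not} argue via the index; it extends $\sL$ to a Hilbert-space operator $\sL\bl$, computes the adjoint $(\sL\bl)^*$ explicitly together with its boundary conditions \er{domain2}, solves the adjoint system to reduce it to a $2\times2$ linear system for the coefficients of $\psi_e$, and shows in each of the three cases $g<0$, $g=0$, $g>0$ that the determinant equals a nonzero multiple of $D(V_c)$; the codimension of $R(\sL)$ is then read off as $\dim N((\sL\bl)^*)$ after an elliptic-regularity step back to the $C^k$ setting. Your first argument (index zero $\Rightarrow$ $\dim(Y\backslash R(\sL))=\dim N(\sL)$, then quote Lemma \ref{null1}) is shorter and avoids the adjoint entirely; it is not circular with the paper's Lemma \ref{index1}, since you establish index zero independently at the trivial solution rather than by homotopy from it. What the paper's longer computation buys is the explicit adjoint null element $\psi_e$ of \er{AdjointSol1} and the concrete representation \er{reperesent1}--\er{Orthogonal1} of the range, both of which are used essentially in Lemma \ref{trans1} to write the transversality condition as the explicit inequality \er{transversality3}; with your route that work would still have to be done separately. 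Two points in your plan deserve care beyond what you flagged: (i) the term $-S_i$ appearing in $\sL_3$ maps $C^1$ to $C^1$ and is \emph{not} compact, so the decomposition cannot be ``diagonal principal part plus compact remainder''; you must instead exploit the triangular structure (order the unknowns as $S_e$, then $S_i$, then $W$), keeping the bounded strictly-triangular couplings inside the invertible part and putting only the genuinely compact terms (such as $-g(\lambda L)S_e$ from $C^2$ into $C^0$, and the finite-rank trace $S_i\mapsto S_i(L)$ in $\sL_4$) into the perturbation; (ii) the $S_e$-block of the principal part must be taken as the pair $S_e\mapsto(-\partial_x^2S_e,\,\partial_xS_e(L)+\tfrac{\lambda}{2}S_e(L))$ from $\{f\in C^2; f(0)=0\}$ into $C^0\times\mathbb R$, whose invertibility rests on the elementary fact $1+\lambda L/2\neq0$. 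With those verifications supplied, your argument is complete and correct.
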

\begin{proof}
Let us denote $\partial_{(\rho_i,R_e,V)}\sF(V_c/L,0,0,0))$ by $\sL$.
We begin by representing the range as 
\begin{gather}
R(\sL)=\left\{(f_i,f_e,f_v,f_b) \in Y; \er{Orthogonal1} \right\}, 
\lb{reperesent1}\\
\int_0^L (f_i\psi_i + f_e\psi_e + f_v\psi_v) dx + f_b\psi_b=0 \ \
\text{for all $(\psi_i,\psi_e,\psi_v,\psi_b)\in N((\sL\bl)^*)$}. 
\lb{Orthogonal1}
\end{gather}
Here $(\sL\bl)^*$ is defined conveniently on a Hilbert space as follows.
Let $X^\bullet$ be the same as $X$ except that $C^k$ is replaced by $H^k$ for $k=1,2,3$.  
Let $Y\bl$ be the same as $Y$ except that $C^k$ is replaced by $H^k$ for $k=0,1$.  
Define $\sL\bl :X\bl \to Y\bl$ to be the unique linear extension of $\sL$ to $X\bl$,
and $(\sL\bl)^*$ to be the adjoint operator of $\sL\bl$.
By standard operator theory, 
\begin{gather*}
R(\sL\bl)=\{(f_i,f_e,f_v,f_b)\in Y\bl; \er{Orthogonal1}\}.
\end{gather*}
From this and the fact $Y\subset Y\bl$, \er{Orthogonal1} is necessary
for the solvability of the problem $\sL\bl(S_i,S_e,W)=(f_i,f_e,f_v,f_b) \in Y$.
On the other hand, if $(f_i,f_e,f_v,f_b) \in \{f\in Y ; \er{Orthogonal1}\}$,
we have a unique solution $(S_i,S_e,W) \in X\bl$ to 
the problem $\sL\bl(S_i,S_e,W)=(f_i,f_e,f_v,f_b) \in Y$.
Then  $(S_i,S_e,W) \in X$ by standard elliptic estimates.
These facts lead to the representation \er{reperesent1}.

It remains to prove that
$N((\sL\bl)^*)$ is at most one-dimensional,
and it is one-dimensional if and only if $D(V_c)=0$.
We first claim that the operator $(\sL\bl)^*$ is precisely given by 
\begin{subequations}
\begin{align}
D((\sL\bl)^*):=&\{(\psi_i,\psi_e,\psi_v,\psi_b)\in H^1(I)\times H^2(I) \times H^3(I) \times \mathbb R; \er{domain2} \text{ holds}  \},
\lb{domain1}\\
(\sL\bl)^*_1(\psi_i,\psi_e,\psi_v,\psi_b):=&
-k_i\lambda \partial_x\psi_i-\psi_v,
\lb{AdjointOp1}\\
(\sL\bl)^*_2(\psi_i,\psi_e,\psi_v,\psi_b):=&
-\partial_x^2\psi_e-g(\lambda L)\psi_e
-k_eh(\lambda)e^{-\frac{\lambda}{2}x} \psi_i+e^{-\frac{\lambda}{2}x}\psi_v,
\lb{AdjointOp2}\\
(\sL\bl)^*_3(\psi_i,\psi_e,\psi_v,\psi_b):=&
\partial_x^2\psi_v,
\lb{AdjointOp3}
\end{align}
where 
\begin{equation}\lb{domain2}
\psi_e(L)-\psi_b=\psi_e(0)=\psi_v(0)=\psi_v(L)
=k_i\lambda \psi_i(L) -\gamma\frac{k_i}{k_e}\lambda e^{\frac{\la}{2}L}\psi_b
=\partial_x\psi_e(L)+\frac{\lambda}{2}\psi_b=0.
\end{equation}
\end{subequations}
We now verify the claim.      It suffices to check that
\begin{align*}
&\langle (\sL\bl_1(S_i,S_e,W),\sL\bl_2(S_i,S_e,W),\sL\bl_3(S_i,S_e,W)),(\psi_i,\psi_e,\psi_v) \rangle+\sL\bl_4(S_i,S_e,W)\psi_b
\\
&=\langle (S_i,S_e,W),(\sL\bl)^*(\psi_i,\psi_e,\psi_v,\psi_b) \rangle
\end{align*}
for all $(S_i,S_e,W) \in X\bl$ and 
$(\psi_i,\psi_e,\psi_v,\psi_b) \in D((\sL\bl)^*)$,
where $\langle\cdot,\cdot\rangle$ denotes the inner product of $L^2(I)$.
We observe that
\begin{align*}
&\langle (\sL\bl_1(S_i,S_e,W),\sL\bl_2(S_i,S_e,W),\sL\bl_3(S_i,S_e,W)),(\psi_i,\psi_e,\psi_v) \rangle+\sL\bl_4(S_i,S_e,W)\psi_b
\\
&=-\langle S_i, k_i\lambda \partial_x\psi_i \rangle 
+S_i(L) k_i\lambda \psi_i(L)
-\langle S_e, k_eh(\lambda)e^{-\frac{\lambda}{2}x} \psi_i \rangle
\\
&\quad -\langle S_e, \partial_x^2\psi_e \rangle
-\partial_xS_e(L)\psi_e(L)
+\partial_xS_e(0)\psi_e(0)
+S_e(L)\partial_x\psi_e(L)
-\langle S_e, g(\lambda L)\psi_e \rangle 
\\
&\quad +\langle W, \partial_x^2\psi_v \rangle
+\partial_x W(L)\psi_v(L)-\partial_x W(0)\psi_v(0)
-\langle S_i, \psi_v \rangle 
+\langle S_e, e^{-\frac{\lambda}{2}x}\psi_v \rangle
\\ 
&\quad +\partial_xS_e(L)\psi_b+S_e(L)\frac{\lambda}{2}\psi_b
-S_i(L)\gamma\frac{k_i}{k_e}\lambda e^{\frac{\la}{2}L}\psi_b, 
\end{align*}
due to integration by parts and  $S_i(0)=S_e(0)=W(0)=W(L)=0$.  
Grouping them with respect to $S_i$, $S_i(L)$, $S_e$, $\partial_xS_e(L)$,
$\partial_xS_e(0)$, $S_e(L)$, $W$, $\partial_xW(L)$, and $\partial_xW(0)$, 
and also using the boundary conditions \er{domain2}, we have
\begin{align*}
&\langle (\sL\bl_1(S_i,S_e,W),\sL\bl_2(S_i,S_e,W),\sL\bl_3(S_i,S_e,W)),(\psi_i,\psi_e,\psi_v) \rangle+\sL\bl_4(S_i,S_e,W)\psi_b
\\
&=-\langle S_i, k_i\lambda \partial_x\psi_i+\psi_v \rangle
-\langle S_e, k_eh(\lambda)e^{-\frac{\lambda}{2}x} \psi_i 
+\partial_x^2\psi_e+g(\lambda L)\psi_e- e^{-\frac{\lambda}{2}x}\psi_v\rangle
+\langle W, \partial_x^2\psi_v \rangle
\\
&=\langle (S_i,S_e,W),(\sL\bl)^*(\psi_i,\psi_e,\psi_v,\psi_b) \rangle.
\end{align*}
This proves the claim. 

Next we compute $N((\sL\bl)^*)$.   
To this end, 
we seek solutions
$(\psi_i,\psi_e,\psi_v,\psi_b) \in D((\sL\bl)^*)$ to the problem 
$(\sL\bl)^*(\psi_i,\psi_e,\psi_v,\psi_b)=0$. 
From $(\sL\bl)^*_3=0$ and boundary conditions $\psi_v(0)=\psi_v(L)=0$,
we see that 
\begin{equation}\lb{psi_v}
\psi_v=0.
\end{equation}
From this and $(\sL\bl)^*_1=(\sL\bl)^*_2=0$, we have the equations
\begin{subequations}\lb{AdjointProblem}
\begin{gather}
\partial_x\psi_i=0,
\lb{AdjointEq1}\\
-\partial_x^2\psi_e-g(\lambda L)\psi_e-k_eh(\lambda)e^{-\frac{\lambda}{2}x} \psi_i=0.
\lb{AdjointEq2}
\end{gather}
Owing to \er{domain2} and substituting $\psi_b=\psi_e(L)$, 
the boundary conditions for this system are  
\begin{gather}
\psi_i(L)-\frac{\gamma}{k_e} e^{\frac{\la}{2}L}\psi_e(L)=0,
\lb{AdjointBC1}
\\
\psi_e(0)=0,
\lb{AdjointBC2}
\\
\partial_x\psi_e(L)+\frac{\lambda}{2}\psi_e(L)=0.
\lb{AdjointBC3}
\end{gather}
\end{subequations}
Now it remains to solve the problem \er{AdjointProblem}
in order to check the null of $N((\sL\bl)^*)$.

Let us reduce the problem \er{AdjointProblem}
to a problem to a scalar equation for $\psi_e$ alone.
Integrating \er{AdjointEq1} over $[x,L]$ and using \er{AdjointBC1}, we obtain
\begin{equation}\lb{AdjointEq3}
\psi_i(x)=\frac{\gamma}{k_e} e^{\frac{\la}{2}L}\psi_e(L).
\end{equation}
Plugging 
this into \er{AdjointEq2}, we have the problem for $\psi_e$:
\begin{equation}
-\partial_x^2\psi_e-g(\lambda L)\psi_e=
\gamma h(\lambda) e^{\frac{\la}{2}L}\psi_e(L)e^{-\frac{\lambda}{2}x},
\lb{AdjointEq4} 
\end{equation}
together with \er{AdjointBC2} and \er{AdjointBC3}.

Then, regarding $\psi_e(L)$ on the left hand side of \er{AdjointEq4} as a given value, 
we have general solutions to \er{AdjointEq4}:
\begin{equation}\lb{AdjointSol1}
\psi_e=\left\{
\begin{array}{ll}
A e^{\sqrt{-g(\lambda L)}x}+Be^{-\sqrt{-g(\lambda L)}x} 
-\gamma e^{\frac{\la}{2}L}\psi_e(L)e^{-\frac{\lambda}{2}x}
& \text{if $g(\lambda L)<0$},
\\
A x+B -\gamma e^{\frac{\la}{2}L}\psi_e(L)e^{-\frac{\lambda}{2}x}
& \text{if $g(\lambda L)=0$},
\\
A \sin\sqrt{g(\lambda L)}x + B\cos\sqrt{g(\lambda L)}x
-\gamma e^{\frac{\la}{2}L}\psi_e(L)e^{-\frac{\lambda}{2}x}
& \text{if $g(\lambda L)>0$}.
\end{array}
\right.
\end{equation}
We do a separate but similar calculation in each case.  

\medskip

\noindent
\underline{\it Case $g<0$.} \ We write $g=g(\la L)$ and put $x=L$ in \er{AdjointSol1}.
Then we see that 
\[
\psi_e(L) =\frac{1}{1+\gamma} (A e^{\sqrt{-g}L}+Be^{-\sqrt{-g}L}).
\]
This and \er{AdjointBC2} give 
\begin{align*}
&0=\psi_e(0) = A +B - \frac{\gamma}{1+\gamma} e^{\frac{\la}{2}L} (A e^{\sqrt{-g}L}+Be^{-\sqrt{-g}L}). 
\end{align*}
Furthermore, from \er{AdjointBC3} and 
$\partial_x(e^{-\frac{\lambda}{2}x})+\frac{\lambda}{2}e^{-\frac{\lambda}{2}x}=0$, 
it must hold that
\begin{align*}
0=\partial_x{\psi}_e(L)+\frac{\lambda}{2}{\psi}_e(L)
=\sqrt{-g}\left(A e^{\sqrt{-g}L}-Be^{-\sqrt{-g}L}\right)
+\frac{\lambda}{2}\left(A e^{\sqrt{-g}L}+Be^{-\sqrt{-g}L} \right).
\end{align*}
Summarizing these two, we have a linear system for the pair $(A,B)$:
\[
M^-\begin{bmatrix} A \\ B \end{bmatrix} = \begin{bmatrix} 0 \\ 0 \end{bmatrix}, \quad
M^-:=\begin{bmatrix}
1-\frac{\gamma}{1+\gamma}e^{\frac{\la}{2}L} e^{\sqrt{-g}L} &
1-\frac{\gamma}{1+\gamma}e^{\frac{\la}{2}L} e^{-\sqrt{-g}L}
\\
\sqrt{-g}e^{\sqrt{-g}L}+\frac{\lambda}{2} e^{\sqrt{-g}L} &
-\sqrt{-g}e^{-\sqrt{-g}L}+\frac{\lambda}{2} e^{-\sqrt{-g}L}
\end{bmatrix}.
\]
It has nontrivial solutions if and only if $\det M^-=0$. 
Then the kernel is one-dimensional since $m^-_{21}$ is positive.
On the other hand, it holds that
\begin{align*}
\det M^-&=\left(-\sqrt{-g}e^{-\sqrt{-g}L}+\frac{\lambda}{2} e^{-\sqrt{-g}L} 
-\sqrt{-g}e^{\sqrt{-g}L}-\frac{\lambda}{2} e^{\sqrt{-g}L}\right)
\\
&\quad +\frac{\gamma}{1+\gamma}e^{\frac{\la}{2}L}
\left(\sqrt{-g}-\frac{\lambda}{2}+\sqrt{-g}+\frac{\lambda}{2}\right)
\\
&=-2\sqrt{-g}D(V_c).
\end{align*}
Hence we conclude that 
$N((\sL\bl)^*)$ is at most one-dimensional,
and it is one-dimensional if and only if $D(V_c)=0$.

\medskip

\noindent
\underline{\it Case $g=0$.} \  Putting $x=L$ in \er{AdjointSol1}, we have 
$\psi_e(L) = A L +B - \gamma \psi_e(L)$.
In the same way as above, using \er{AdjointBC2} and \er{AdjointBC3},
we have 
\[
M^0\begin{bmatrix} A \\ B \end{bmatrix} = \begin{bmatrix} 0 \\ 0 \end{bmatrix}, \quad
M^0:=\begin{bmatrix}
-\frac{\gamma}{1+\gamma}e^{\frac{\la}{2}L}L &
1-\frac{\gamma}{1+\gamma}e^{\frac{\la}{2}L}
\\
1+\frac{\lambda}{2}L & 
\frac{\lambda}{2}
\end{bmatrix}.
\]
Note that $1+\frac{\lambda}{2}L >0$ and 
\begin{align*}
\det M^0&=-\left(1+\frac{\lambda}{2}L\right)
+\frac{\gamma}{1+\gamma}e^{\frac{\la}{2}L}
\left(-\frac{\lambda}{2}L+1+\frac{\lambda}{2}L\right)
=-D(V_c).
\end{align*}
Hence we conclude that
$N((\sL\bl)^*)$ is at most one-dimensional,
and it is one-dimensional if and only if $D(V_c)=0$.

\medskip

\noindent
\underline{\it Case $g>0$.} \  We write $g=g(\la L)$ and put $x=L$ in \er{AdjointSol1}.
Then  
\[
\psi_e(L) =\frac{1}{1+\gamma} (A\sin\sqrt{g}L+B\cos\sqrt{g}L).
\]
This, \er{AdjointBC2} and \er{AdjointBC3} give us the identities
\begin{gather*}
0=\psi_e(0) =B - \frac{\gamma}{1+\gamma} e^{\frac{\la}{2}L} (A\sin\sqrt{g}L+B\cos\sqrt{g}L),
\\
0=\partial_x{\psi}_e(L)+\frac{\lambda}{2}{\psi}_e(L)
=\sqrt{g}(A\cos\sqrt{g}L-B\sin\sqrt{g}L)
+\frac{\lambda}{2}(A\sin\sqrt{g}L+B\cos\sqrt{g}L).
\end{gather*}
Summarizing these two, we have a linear equation for $(A,B)$:
\[
M^+\begin{bmatrix} A \\ B \end{bmatrix} = \begin{bmatrix} 0 \\ 0 \end{bmatrix}, \quad
M^+:=\begin{bmatrix}
-\frac{\gamma}{1+\gamma}e^{\frac{\lambda}{2}L}\sin\sqrt{g}L &
1-\frac{\gamma}{1+\gamma}e^{\frac{\lambda}{2}L}\cos\sqrt{g}L
\\
\sqrt{g}\cos\sqrt{g}L+\frac{\lambda}{2}\sin\sqrt{g}L &
-\sqrt{g}\sin\sqrt{g}L+\frac{\lambda}{2}\cos\sqrt{g}L
\end{bmatrix}.
\]
But note that
\begin{align*}
&\det M^+ 
=-\sqrt{g}\cos\sqrt{g}L+\frac{\lambda}{2}\sin\sqrt{g}L
\\
\quad &+\frac{\gamma}{1+\gamma}e^{\frac{\la}{2}L}
\left\{\sin\sqrt{g}L\left(\sqrt{g}\sin\sqrt{g}L-\frac{\lambda}{2}\cos\sqrt{g}L\right)
+\cos\sqrt{g}L\left(\sqrt{g}\cos\sqrt{g}L+\frac{\lambda}{2}\sin\sqrt{g}L\right)\right\}
\\
&=-\sqrt{g}D(V_c).
\end{align*}
Hence we conclude that 
$N((\sL\bl)^*)$ is at most one-dimensional,
and it is one-dimensional if and only if $D(V_c)=0$.
\end{proof}

In order to clarify the variables in the next lemma, we denote $D(V_c)=D(V_c,a,b,\gamma)$. 
Let us also define 
$$A = \{(a,b,\gamma)\in(\real_+)^3 \ ; \  \text{there exists a root of } D(V_c,a,b,\gamma)=0\}.$$  
Then by definition $V_c^\dagger = V_c^\dagger(a,b,\gamma)$ is the smallest root, for any $(a,b,\gamma)\in A$.  
Let $A^\circ$ be the interior of $A$.  
We also explicitly denote $g(V_c) = g(V_c,a,b) = \frac{aV_c}{L} \exp\frac{-bL}{V_c} - \frac{V_c^2}{4L^2}$.  
Transversality is the condition that the tangent of the presumed local curve and the  
tangent of the trivial curve do not coincide.

\begin{lem}\lb{trans1}
The transversality condition 
\begin{equation}\label{transversality1}
\partial_\la \pa_{(\rho_i,R_e,V)}  \sF(V_c^\dagger/L,0,0,0)[1,\varphi_i^\dagger,\varphi_e^\dagger,\varphi_v^\dagger]
\notin R(\partial_{(\rho_i,R_e,V)}\sF(V_c^\dagger/L,0,0,0))  
\end{equation}
is valid for almost every $(a,b,\gamma)\in A^{\circ}$. 
\end{lem}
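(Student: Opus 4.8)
The plan is to translate the transversality condition \eqref{transversality1} into a single scalar inequality and then show that the inequality can fail only on a measure-zero subset of $A^{\circ}$.  By Lemma \ref{range1}, $Y\backslash R(\sL^\dagger)$ is one-dimensional, where $\sL^\dagger=\partial_{(\rho_i,R_e,V)}\sF(V_c^\dagger/L,0,0,0)$, and its orthogonality is characterized by $N((\sL^{\bullet})^*)$, which is spanned by a single element $\psi^\dagger=(\psi_i^\dagger,\psi_e^\dagger,\psi_v^\dagger,\psi_b^\dagger)$ computed explicitly in the proof of Lemma \ref{range1} (with $\psi_v^\dagger=0$, $\psi_i^\dagger$ constant, and $\psi_e^\dagger$ a combination of exponentials/sines and the particular solution $-\gamma e^{\lambda L/2}\psi_e(L)e^{-\lambda x/2}$).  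Hence \eqref{transversality1} holds if and only if
\[
T(a,b,\gamma):=\Big\langle \partial_\la\partial_{(\rho_i,R_e,V)}\sF(V_c^\dagger/L,0,0,0)[1,\varphi_i^\dagger,\varphi_e^\dagger,\varphi_v^\dagger],\ \psi^\dagger\Big\rangle\neq 0,
\]
where $\langle\cdot,\cdot\rangle$ is the pairing of $Y$ with $N((\sL^{\bullet})^*)$ used in \eqref{Orthogonal1}.  First I would compute $\partial_\la\partial_{(\rho_i,R_e,V)}\sF(\lambda,0,0,0)$ componentwise — this is a routine differentiation of the linearized operators \eqref{LinearEq1}--\eqref{LinearEq4} in $\lambda$, producing terms involving $h'(\lambda)$, $e^{-\lambda x/2}$, factors of $x$, and $g'(\lambda L)$ — and then pair it against $\psi^\dagger$, integrating by parts, to obtain an explicit formula for $T$ as an elementary function of $(a,b,\gamma)$ (through $V_c^\dagger$, $g(V_c^\dagger)$, $h(V_c^\dagger/L)$ and their $\lambda$-derivatives, together with the normalization constants of $\varphi^\dagger$ and $\psi^\dagger$).

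The key structural point is that $T$ is a real-analytic function of $(a,b,\gamma)$ on $A^{\circ}$.  This requires knowing that $V_c^\dagger(a,b,\gamma)$ is real-analytic on $A^{\circ}$: since $D$ is real-analytic in all of its arguments and $V_c^\dagger$ is a simple root of $D(\cdot,a,b,\gamma)$ at points of $A^{\circ}$ — simplicity being exactly what transversality will end up being equivalent to, so one has to be slightly careful here and instead argue that on the open dense subset of $A^{\circ}$ where $D_{V_c}(V_c^\dagger,a,b,\gamma)\neq 0$ the implicit function theorem gives analyticity of $V_c^\dagger$, and on the complement $T$ is handled separately or shown to be lower-dimensional — the composition defining $T$ is real-analytic there.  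Granting analyticity of $T$ on a connected open subset, the zero set $\{T=0\}$ is either all of that subset or has Lebesgue measure zero (a standard fact for real-analytic functions that are not identically zero).  So the crux reduces to exhibiting a single parameter triple in each connected component of $A^{\circ}$ at which $T\neq 0$.

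To produce such a witness I would examine an explicit asymptotic or limiting regime — for instance, $\gamma$ small with $a$ correspondingly large so that the sparking voltage is small, or a regime where $g(V_c^\dagger)$ is bounded away from $0$ — and evaluate the leading-order behavior of $T$ there.  In the small-$V_c^\dagger$ regime one expects $\psi_e^\dagger$, $\varphi_e^\dagger$ to have simple leading profiles and $T$ to reduce to a nonzero multiple of something like $D_{V_c}(V_c^\dagger)$ plus explicitly computable correction terms, which will be nonzero generically.  Alternatively, I would relate $T$ directly to $\frac{d}{d\lambda}D(\lambda L)\big|_{\lambda=\lambda^\dagger}$: a natural conjecture, consistent with the analogous computation in \cite{SS1}, is that $T$ equals (a nonzero constant times) the derivative of the sparking function $D$ at $V_c^\dagger$, in which case transversality fails precisely at double roots of $D$, and since $D(\cdot,a,b,\gamma)$ is a nonconstant analytic function depending analytically on parameters, the set of $(a,b,\gamma)$ for which its smallest root is a double root is a proper analytic subvariety of $A^{\circ}$, hence of measure zero.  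The main obstacle is the bookkeeping in the integration-by-parts computation of $T$ and, more delicately, establishing the analyticity of $V_c^\dagger(a,b,\gamma)$ on a full-measure open subset of $A^{\circ}$ without circularly invoking the very transversality (simple-root) property one is trying to prove; I would resolve this by working on the open set where $D_{V_c}(V_c^\dagger)\neq 0$ first and then arguing that the residual set $\{D_{V_c}(V_c^\dagger)=0\}$ is itself measure zero in $A^\circ$.
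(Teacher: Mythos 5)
Your overall strategy coincides with the paper's: reduce \eqref{transversality1} to the non-vanishing of a scalar function obtained by pairing $\partial_\la\partial_u\sF[1,u^*]$ against the adjoint null vector from Lemma \ref{range1}, establish analyticity of that function in $(a,b,\gamma)$ via analyticity of $V_c^\dagger$, and invoke the measure-zero property of zero sets of non-trivial real-analytic functions. The paper carries this out by defining the explicit function $F$ in \eqref{transversality2} and showing, after an integration by parts using $\sF_4=0$, that transversality is exactly $F\neq 0$.

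However, there are two genuine gaps in your plan. First, and most seriously, your treatment of the degenerate set $\{\partial_{V_c}D(V_c^\dagger,a,b,\gamma)=0\}$ is not an argument. You correctly identify the circularity danger and propose to show this set has measure zero, but your suggested justification --- that the parameters for which the smallest root of $D$ is a double root form ``a proper analytic subvariety'' --- does not follow: $V_c^\dagger$ is defined as an infimum of roots and need not be analytic on that set, and for a general analytic family a double root can persist on an open set of parameters (e.g.\ $D(V_c,a)=(V_c-1)^2$). The paper closes this gap with a structural trick specific to \eqref{spfu}: on the degenerate set both $D=0$ and $\partial_{V_c}D=0$ hold, and the combination $D-2\partial_{V_c}D=0$ contains no $\gamma$ (because $\gamma$ enters $D$ only through $-\tfrac{\gamma}{1+\gamma}e^{V_c/2}$, which $\partial_{V_c}$ multiplies by $\tfrac12$); hence $V_c^\dagger$ there depends only on $(a,b)$, then $D=0$ determines $\gamma$ from $(a,b)$, so the set is a graph and has measure zero by Fubini--Tonelli. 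Second, you do not address the non-analyticity of $D$ where $g(V_c^\dagger,a,b)=0$ (the branch point of $\mu=L\sqrt{-g}$); the paper must first excise the measure-zero sets $Z_0$ and $Z_1$ before any implicit-function or analyticity argument is available. A lesser issue: your final step (exhibiting a witness where $T\neq0$, or identifying $T$ with a multiple of $\partial_{V_c}D$) remains a conjecture; the paper instead works with the explicit elementary expression $F$ and appeals to analyticity, so at minimum you would need to produce that explicit formula before the measure-zero conclusion can be drawn.
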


\begin{proof} 				
The first part of the proof is devoted to showing that various sets of points $(a,b,\gamma)$ have measure zero in $\real^3$. 
It is easy to check that $g(V_c,a,b)=0$ has no solution if $a<\frac{e}{4}b$, 
exactly one solution $W_0(a,b)$ if $a=\frac{e}4b$, and exactly two solutions 
$W_1(a,b) , W_2(a,b)$ if $a>\frac{e}{4}b$.  
The set $Z_0 =  \{(a,b,\gamma)\in (\mathbb R_+)^{3}\ ;\ a=\frac{e}{4}b\}$ obviously has measure zero. 
On its complement $Z_0^c$ we calculate that  $\frac{\partial}{\partial V_c}  g(W_j(a,b),a,b)\neq 0$ for $j=1,2$.  

Denoting $\mu = L\sqrt{-g(V_c,a,b)}$ as before, recall the definition \er{spfu} of the sparking function:   
\begin{equation}  \label{spfu2} 
D(V_c,a,b,\gamma)= \frac{1}{2}  \left(e^{\mu}  +  e^{-\mu}\right)
+   \frac{V_c}{4 {\mu} }  \left(e^{\mu}  -  e^{-\mu} \right)  -   \frac{\gamma}{1+\gamma}  e^{\frac{V_c}{2}}, 
\end{equation} 
A short calculation shows that if both $g(W,a,b)=0$ and $D(W,a,b,\gamma)=0$, then 
\bqn  \label{Z1} 
\gamma =[1 + \tfrac{W}2]\left[ \exp(\tfrac{W}2) -1 - \tfrac{W}2\right]^{-1}.  
\eqn
The set $Z_1 = \{(a,b,\gamma)\in (\mathbb R_+)^{3}\ ;\ \eqref{Z1} \text{ holds, where } g(W,a,b)=0\}$ obviously has measure zero.  
Thus it is clear that on the complementary set $Z_1^c$ we  have $g(V_c^\dagger(a,b,\gamma),a,b) \ne 0$.  
Within $Z_1^c$ the implicit function theorem ensures that 
the functions $W_j(a,b)$ are continuous  ($j=1,2$).

Clearly the set $\tilde{A}:=A^{\circ}\cap Z_{0}^{c} \cap Z_{1}^{c} $ is open.  
Now let 
$$ Z_2 = \left\{(a,b,\gamma)\in \tilde{A}\ ; \ 
\frac{\pa D}{\pa V_c}(V_c^\dagger,a,b,\gamma)=0 \right\},$$  
where $V_c^\dagger = V_c^\dagger(a,b,\gamma)$.  
We claim that $\tilde{A}\cap Z_2^{c}$ is an open set.  
In order to prove the claim, 
notice that both $g(V_c^{\dagger},a,b)\ne0$ (as shown above) 
and $\frac{\pa D}{\pa V_c}(V_c^\dagger,a,b,\gamma)\ne0$ are true on $\tilde{A} \cap Z_2^c$.  
The sparking function $D(V_c,a,b,\gamma)$ is a real-analytic function of four variables except where $g(V_c,a,b)$ vanishes.  
So for each point $(a,b,\gamma)\in \tilde{A} \cap Z_2^c$, 
we can apply the real-analytic version of the implicit function theorem 
to the equation $D(V_c^\dagger,a,b,\gamma)=0$.
Hence there is a neighborhood of $(a,b,\gamma)$
in which the function $V_c^\dagger$ is real-analytic and
$\frac{\pa D}{\pa V_c}(V_c^\dagger,a,b,\gamma)\ne0$.
Thus $\tilde{A}\cap Z_2^{c}$ is open.  
Furthermore, $V_c^\dagger: \tilde{A}\cap Z_2^c\to\real$ is a real-analytic function for which  
$\frac{\pa D}{\pa V_c}(V_c^\dagger,a,b,\gamma)$ does not vanish.

Next we claim that the $Z_2$ also has $\real^3$-measure zero.
Within $Z_2$ both of the equations, $D=0$ and $ \frac{\pa D}{\pa V_c}=0$, are satisfied by $(V_c^\dagger, a,b,\gamma)$.  
 We calculate 
\begin{equation}  
\frac{\pa D}{\pa V_c}   =   \frac{-L^2g'(V_c)}{2\mu} 
\left\{ (\frac12-\frac{V_c}{4\mu^2}) (e^\mu-e^{-\mu})  +  \frac{V_c}{4\mu}(e^\mu+e^{-\mu})  \right\}
+\frac1{4\mu}(e^\mu-e^{-\mu})   -   \frac12\frac{\gamma}{1+\gamma}  e^{\frac{V_c}{2}}.  
\end{equation} 
The equation $D - 2\frac{\pa D}{\pa V_c}=0$ contains no explicit $\gamma$.  
It is a single equation for $(V_c^\dagger,a,b)$.  
Thus, within $Z_2$, the function $V_c^\dagger$ depends only on $(a,b)$.  
Hence, using \er{spfu2} within $Z_2$, we see that the variable $\gamma$ is determined uniquely by $(a,b)$.    
So, due to the Fubini--Tonelli theorem, $Z_2$ has $\real^3$-measure zero.

Now we define the function 
\begin{align}
F(a,b,\gamma):=&-\gamma e^{\frac{V_c^\dagger}{2}} \psi_e(L) 
\int_0^L \left\{h'(V_c^\dagger/L)-\frac{x}{2}h(V_c^\dagger/L)\right\} 
e^{-\frac{V_c^\dagger}{2L}x}\varphi_e^\dagger(x) \,dx \,   \notag\\
&-Lg'(V_c^\dagger) \int_0^L \psi_e(x) \varphi_e^\dagger(x) \, dx
+\frac{1}{2}\psi_e(L)\left\{\varphi_e^\dagger(L)-L\partial_x\varphi_e^\dagger(L)
-\frac{V_c^\dagger}{2}\varphi_e^\dagger(L) \right\},     
\label{transversality2}
\end{align}
where $\psi_e$ is given in \er{AdjointSol1} and 
$\varphi_e^\dagger$ is equal to the function $S_e$ in \er{Sol2} with \er{positive1}.  
In \er{transversality2} the functions $V_c^\dagger, \psi_e$ and $\varphi_e^\dagger$ 
depend on the parameters $(a,b,\gamma)$.  
Not only is $V_c^\dagger:\tilde{A}\cap Z_2^c\to\real$  real-analytic, but 
we observe from \er{Sol2} and \er{AdjointSol1} 
that $\varphi_e^\dagger$ and $\psi_e$ also depend analytically on $(a,b,\gamma)$.  
It follows that the set  $Z_3 = \{(a,b,\gamma)\in \tilde{A}\ ; \ F(a,b,\gamma)=0\}$ also has measure zero 
because the zero set of any analytic function $\not\equiv0$ must have measure zero.  
In the rest of the proof we will only consider the set $\sA  =  \tilde{A} \cap  Z_2^c \cap Z_3^c 
= A^{\circ}  \cap  Z_0^c \cap Z_1^c  \cap  Z_2^c \cap Z_3^c $.  Because of the definition of $Z_3$, 
we know that $F(a,b,\gamma)\ne0$ within $\sA$.

By differentiating \er{LinearEq1}--\er{LinearEq4} with respect to $\lambda$, we see that
\begin{align}
\partial_\la \pa_{(\rho_i,R_e,V)} \sF_1(\lambda,0,0,0)[1,\varphi_i^\dagger,\varphi_e^\dagger,\varphi_v^\dagger]&=
k_i\partial_x \varphi_i^\dagger
-k_e\left\{h'\left(\lambda\right)e^{-\frac{\lambda}{2}x}-\frac{x}{2}h\left(\lambda\right)e^{-\frac{\lambda}{2}x}\right\} \varphi_e^\dagger,
\lb{LinearEq11}\\
\partial_\la \pa_{(\rho_i,R_e,V)} \sF_2(\lambda,0,0,0)[1,\varphi_i^\dagger,\varphi_e^\dagger,\varphi_v^\dagger]&=
-Lg'(\lambda L)\varphi_e^\dagger,
\lb{LinearEq12}\\
\partial_\la \pa_{(\rho_i,R_e,V)} \sF_3(\lambda,0,0,0)[1,\varphi_i^\dagger,\varphi_e^\dagger,\varphi_v^\dagger]&=
-\frac{x}{2}e^{-\frac{\lambda}{2}x}\varphi_e^\dagger,
\lb{LinearEq13}\\
\partial_\la \pa_{(\rho_i,R_e,V)} \sF_4(\lambda,0,0,0)[1,\varphi_i^\dagger,\varphi_e^\dagger,\varphi_v^\dagger]&=
\frac{1}{2}\varphi_e^\dagger(L)
-\gamma\frac{k_i}{k_e}\left(e^{\frac{\la}{2}L}+\frac{L}{2}\lambda e^{\frac{\la}{2}L}\right)\varphi_i^\dagger(L).
\lb{LinearEq14}
\end{align}
									On the other hand, 
consider the range $R(\partial_{(\rho_i,R_e,V)}\sF(V_c/L,0,0,0))$, which 
is given in \er{reperesent1} and \er{Orthogonal1}.
Owing to these formulas together with \er{domain2}, \er{psi_v}, and \er{AdjointEq3},
the transversality condition \er{transversality1} can be written as
\begin{multline}
\frac{\gamma}{k_e} e^{\frac{V_c^\dagger}{2}} \psi_e(L) \int_0^L \left[ 
k_i \partial_x \varphi_i^\dagger(x)
-k_e\left\{h'(V_c^\dagger/L)-\frac{x}{2}h(V_c^\dagger/L)\right\} 
e^{-\frac{V_c^\dagger}{2L}x}\varphi_e^\dagger(x) 
\right] \,dx \, 
\\
-Lg'(V_c^\dagger) \int_0^L \psi_e(x) \varphi_e^\dagger(x) \, dx
+\psi_e(L)\left\{\frac{1}{2}\varphi_e^\dagger(L)
-\gamma\frac{k_i}{k_e}\left(e^{\frac{V_c^\dagger}{2}} 
+\frac{V_c^\dagger}{2} e^{\frac{V_c^\dagger}{2}}\right)\varphi_i^\dagger(L) \right\}
\neq 0.
\lb{transversality3}
\end{multline}
This is what we have to prove.  
However, the first and last terms in \er{transversality3} add up to  
\begin{align*}
&\frac{\gamma}{k_e} e^{\frac{V_c^\dagger}{2}} \psi_e(L) \int_0^L k_i \partial_x \varphi_i^\dagger(x) \,dx
-\gamma\frac{k_i}{k_e}\left(e^{\frac{V_c^\dagger}{2}}+\frac{V_c^\dagger}{2} e^{\frac{V_c^\dagger}{2}}\right)\psi_e(L)\varphi_i^\dagger(L) 
\\
&=-\gamma\frac{k_i}{k_e}\frac{V_c^\dagger}{2} e^{\frac{V_c^\dagger}{2}}\psi_e(L)\varphi_i^\dagger(L)
=-\frac{1}{2}\psi_e(L)\left\{L\partial_x\varphi_e^\dagger(L)+\frac{V_c^\dagger}{2}\varphi_e^\dagger(L)\right\}. 
\end{align*}
The last equality is due to  \er{LinearEq4} and the fact that 
$(\varphi_i,\varphi_e,\varphi_v) \in N(\partial_{(\rho_i,R_e,V)}\sF(V_c^\dagger/L,0,0,0))$.  
Substituting this simple equality into \er{transversality3} 
shows that the transversality condition \er{transversality2} is precisely the same as 
$F(a,b,c)\ne0$, which we have already shown is true within $\sA$.  
We previously showed that the complement of $\sA$ has measure zero.
\end{proof}

\section{Global Bifurcation}

In this section, we apply a functional-analytic global bifurcation theorem 
to the stationary problem \er{sp0}.  
The theory of global bifurcation goes back to Rabinowitz \cite{Rab}  using topological degree.  
For a nice exposition see \cite{Ki1}.   
 A different version using analytic continuation 
 goes back to Dancer \cite{Da1} with major improvements in \cite{BST1} 
 and a final improvement in \cite{CS1}. 
The specific version that is most convenient to use here is Theorem 6 in \cite{CS1}, 
which is the following:

\begin{thm}[\cite{CS1}]\lb{Global0}
Let $X$ and $Y$ be Banach spaces,
$\sO$ be an open subset of ${\mathbb R}\times X$ and $\sF:\sO \to Y$ be a real-analytic function. 
Suppose that
\begin{enumerate}[{(H}1{)}]
\item $(\lambda,0)\in \sO$ and $\sF(\lambda,0)=0$ for all $\lambda \in \mathbb R$;
\item for some $\lambda^* \in \mathbb R$, $N(\partial_u\sF(\lambda^*,0))$ and 
$Y\backslash R(\partial_u\sF(\lambda^*,0))$ are one-dimensional, 
with the null space generated by $u^*$, which satisfies  the transversality condition
\[
\partial_\la\partial_u\sF(\lambda^*,0)(1,u^*)\notin R(\partial_u\sF(\lambda^*,0)),  
\]
where $\partial_u$ and $\partial_\la\partial_u$ mean Fr\'echet derivatives for $(\la,u) \in \sO$,
and $N(\sL)$ and $R(\sL)$ denote the null space and range of a linear operator $\sL$ between
two Banach spaces;
\item $\partial_u\sF(\lambda,u)$ is a Fredholm operator of index zero for any $(\la,u) \in \sO$
that satisfies the equation $\sF(\lambda,u)=0$;
\item for some sequence $\{\sO_j\}_{j\in \mathbb N}$ of bounded closed subsets of $\sO$ with
$\sO=\cup_{j\in \mathbb N} \sO_j$, 
the set $\{(\la,u) \in \sO ;\sF(\lambda,u)=0\}\cap \sO_j$ is compact for each $j\in\mathbb N$.
\end{enumerate}

Then there exists in $\sO$ a continuous curve 
${\sK}=\{(\la(s),u(s));s \in \mathbb R\}$
of $\sF(\lambda,u)=0$ such that:
\begin{enumerate}[{(C}1{)}]
\item $(\la(0),u(0))=(\la^*,0)$;
\item $u(s)=su^*+o(s)$ in $X$  as $s \to 0$;
\item there exists a neighborhood $\sW$ of $(\la^*,0)$
and $\ve > 0$ sufficiently small such that
\begin{equation*}
\{(\la,u)\in \sW ; u\neq 0 \text{ and } \sF(\la,u)=0 \}
=\{(\la(s),u(s)) ; 0<|s|<\ve\};
\end{equation*}
\item $\sK$ has a real-analytic reparametrization locally around each of its points;
\item one of the following two alternatives occurs:
\begin{enumerate}[(I)]
\item for every $j\in\mathbb N$, there exists $s_j>0$ such that $(\la(s),u(s))\notin \sO_j$
for all $s \in \mathbb R$ with $|s|>s_j$;
\item there exists $T>0$ such that
$(\la(s),u(s))=(\la(s+T),u(s+T))$ for all $s \in \mathbb R$.
\end{enumerate}
\end{enumerate}
Moreover, such a curve of solutions of $\sF(\lambda,u)=0$
having the properties (C1)-(C5) is unique (up to reparametrization).
\end{thm}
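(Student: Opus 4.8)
The statement is an analytic global bifurcation theorem, and I would prove it by the analytic-continuation method that originates with Dancer \cite{Da1} and was sharpened in \cite{BST1}, combined with the Crandall--Rabinowitz local theory. Hypotheses (H1)--(H2) are exactly those of Theorem \ref{Local}, which already produces a local curve through $(\la^*,0)$ with the properties (C1)--(C3). To obtain the local version of (C4) and to set up the continuation, I would perform a Lyapunov--Schmidt reduction at the bifurcation point: since $\pa_u\sF(\la^*,0)$ is Fredholm of index zero (by (H3), applicable because $(\la^*,0)$ solves $\sF=0$ by (H1)) with one-dimensional kernel $\spn\{u^*\}$ and one-dimensional cokernel, write $X=\spn\{u^*\}\oplus X_1$ with $X_1$ a closed complement and $Y=R(\pa_u\sF(\la^*,0))\oplus \spn\{y^*\}$, solve the component of $\sF=0$ lying in $R(\pa_u\sF(\la^*,0))$ by the analytic implicit function theorem, and thereby reduce $\sF(\la,u)=0$ to a single scalar real-analytic equation $G(\la,t)=0$ for $u=tu^*+\cdots$. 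The trivial solutions correspond to $t=0$; dividing out that factor and invoking the transversality condition, the Weierstrass preparation theorem (equivalently a Puiseux expansion of the reduced equation) shows that the remaining zero set is, near the origin, a single analytic arc, which gives (C4) locally.

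Next I would construct a maximal analytic continuation of this arc. At any solution point $(\la_0,u_0)\in\sO$ of $\sF=0$ there are two cases. If $\pa_u\sF(\la_0,u_0)$ is invertible, the analytic implicit function theorem gives a unique local analytic branch of solutions. If it is not invertible, then by (H3) its kernel and cokernel are finite-dimensional of equal dimension, and a Lyapunov--Schmidt reduction near $(\la_0,u_0)$ reduces the solution set to the common zero set of finitely many real-analytic functions of finitely many real variables; by the structure theory of real-analytic varieties this set contains only finitely many analytic arcs through the point, so any incoming arc can be continued (in general non-uniquely). Splicing the local arcs together by the relabelling and bookkeeping procedure of \cite{BST1} --- at each singular junction one selects an outgoing arc and records how often each arc has been traversed --- yields a single curve $\sK=\{(\la(s),u(s));s\in\mathbb R\}$, real-analytic up to reparametrization, establishing (C4). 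Uniqueness as a set follows because any two maximal continuations of the local branch agree near $(\la^*,0)$ and, being governed at each point by the same finite local solution set, must coincide arc by arc.

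It then remains to establish the dichotomy (C5). Suppose alternative (I) fails, so that for some $j$ the curve $\sK$ re-enters $\sO_j$ for arbitrarily large $|s|$. By (H4) the set $\{(\la,u)\in\sO;\ \sF(\la,u)=0\}\cap\sO_j$ is compact, so $\sK$ accumulates at a solution point $(\la_\infty,u_\infty)\in\sO_j$. Since the local solution set there is a finite union of properly parametrized analytic arcs, the only way $\sK$ can return to every neighborhood of $(\la_\infty,u_\infty)$ infinitely often is to re-traverse one of those arcs periodically, which forces $(\la(s),u(s))=(\la(s+T),u(s+T))$ for some $T>0$, i.e. alternative (II). This closes the argument.

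The technical heart --- and the main obstacle --- is the global continuation step: making rigorous the notion of ``continuation through a singular point'' where $\pa_u\sF$ degenerates, and carrying out the combinatorial argument that glues the countably many local analytic arcs into a curve parametrized by all of $\mathbb R$ without the construction stalling or double-counting a branch. This is precisely where the Fredholm hypothesis (H3) (to keep the reduced problem finite-dimensional), the analyticity (to invoke the Weierstrass/Puiseux structure), and the properness hypothesis (H4) (to prevent $\sK$ from accumulating on an interior solution without closing into a loop) are all indispensable, and it is where the refinements of \cite{BST1} over the original argument of \cite{Da1} are needed.
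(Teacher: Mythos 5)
This statement is not proved in the paper at all: it is quoted verbatim as Theorem 6 of \cite{CS1}, so there is no in-paper argument to compare against. Your outline correctly reproduces the standard analytic-continuation strategy behind that theorem (Crandall--Rabinowitz plus Lyapunov--Schmidt at the bifurcation point, finite-dimensional reduction at singular solution points via the Fredholm hypothesis, the structure theory of real-analytic varieties to continue through such points, and the compactness hypothesis (H4) to force the loop alternative), which is precisely the route of \cite{Da1}, \cite{BST1} and \cite{CS1}. The one place where your sketch is thinnest is the dichotomy (C5): accumulation of $\sK$ at a solution point in a compact $\sO_j$ does not by itself force periodicity --- one must also rule out the curve stalling or oscillating without exactly retracing an arc, which requires the uniform lower bound on the length of the distinguished arcs and the counting argument of \cite{BST1}; you acknowledge this is deferred, which is consistent with the paper itself deferring the entire proof to \cite{CS1}.
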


Hypothesis $(H2)$ is the same local bifurcation condition as in Theorem \ref{Local}, 
while $(H3)$ and $(H4)$ are the global ones.  
$(C1)-(C3)$ are local conclusions, $(C4)$ is a statement of regularity, 
which is a consequence of the real-analyticity of $\sF$. 
$(C5)$ is the global conclusion which states that {\it either} the curve 
reaches the boundary of the set $\sO_j$ {\it or} the curve is periodic 
(that is, forms a closed loop).  
The hypotheses $(H3)$ and $(H4)$ are validated in Lemmas \ref{index1} and \ref{cpt1},
respectively.
For that purpose, consider the linearized operator around an arbitrary triple of functions 
$(\rho_i^0,R_e^0,V^0)\in X$.  
\begin{lem}\lb{index1} 
For any $(\lambda,\rho_i^0,R_e^0,V^0)\in {\sO}$, 
the Fr\'echet derivative 
$\sL^0 = \partial_{(\rho_i,R_e,V)} \sF(\lambda,\rho_i^0,R_e^0,V^0)$
is a linear Fredholm operator of index zero from $X$ to $Y$.
\end{lem}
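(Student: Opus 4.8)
## Proof proposal for Lemma \ref{index1}

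The plan is to compute the linearized operator $\sL^0 = \partial_{(\rho_i,R_e,V)}\sF(\lambda,\rho_i^0,R_e^0,V^0)$ explicitly, recognize it as a compact perturbation of an invertible operator (modulo finite-dimensional boundary data), and invoke the standard fact that Fredholmness of index zero is stable under compact perturbations. First I would write out the four components of $\sL^0[S_i,S_e,W]$. Differentiating $\sF_1$ gives a first-order operator in $S_i$ of the form $k_i(\partial_xV^0+\lambda)\partial_x S_i + (\text{lower order in } S_i) + (\text{terms in } S_e, W)$; since $\partial_xV^0+\lambda \geq 1/j > 0$ on $\sO$, this is a genuinely first-order ODE operator in $S_i$ with no loss of derivatives when inverted with the boundary condition $S_i(0)=0$. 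Differentiating $\sF_2$ gives $-\partial_x^2 S_e + (\text{first-order and zeroth-order terms in } S_e) + (\text{terms in } S_i, W)$, a second-order elliptic operator in $S_e$. Differentiating $\sF_3$ gives $\partial_x^2 W - S_i + e^{-\lambda x/2}S_e$, second-order in $W$. Differentiating $\sF_4$ gives a linear functional on the boundary values.

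Next I would exhibit $\sL^0$ as $\sL^0 = \sA + \sK$ where $\sA$ is the ``principal part'' — the block-triangular operator $(S_i,S_e,W)\mapsto (k_i(\partial_xV^0+\lambda)\partial_x S_i,\; -\partial_x^2 S_e,\; \partial_x^2 W,\; \partial_x S_e(L)+\tfrac\lambda2 S_e(L))$ together with the stated boundary conditions — and $\sK$ collects all the remaining terms. The key observation is that $\sK: X\to Y$ is compact: every term in $\sK$ either multiplies $S_i, S_e, W$ (or one derivative thereof) by a continuous function and lands in a space with the \emph{same} number of derivatives as the target component of $Y$, or it is a coupling term that moves $S_i$ or $S_e$ into the $\sF_3$-slot (which is $C^1$) using at most a $C^1$ or $C^2$ function — in every case there is a gain of at least one derivative relative to $Y$, so by the Arzelà–Ascoli / Rellich-type compact embedding $C^{k+1}\hookrightarrow C^k$ on $[0,L]$ the map is compact. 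One must be slightly careful with the $\sF_4$-component: the boundary functional involves $\partial_x W(L)$ and $S_i(L)$, but these are bounded linear functionals on $X$ with finite-dimensional range $\real$, hence automatically compact.

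Then I would argue that $\sA$ is Fredholm of index zero. This is essentially because $\sA$ is ``decoupled up to triangular structure'': given $(f_i,f_e,f_v,f_b)\in Y$, one solves the $S_e$-equation $-\partial_x^2 S_e = f_e$ with the two boundary conditions $S_e(0)=0$ and $\partial_x S_e(L)+\tfrac\lambda2 S_e(L)=f_b$ — a regular Sturm–Liouville-type problem whose solvability and kernel are governed by at most a one-dimensional obstruction; then one solves the first-order equation $k_i(\partial_xV^0+\lambda)\partial_x S_i = f_i + (\text{the already-determined }S_e\text{-term})$ with $S_i(0)=0$, which is uniquely solvable since the coefficient is bounded away from zero; finally one solves $\partial_x^2 W = f_v + S_i - e^{-\lambda x/2}S_e$ with $W(0)=W(L)=0$, again with at most a finite-dimensional obstruction and finite-dimensional kernel. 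Counting dimensions shows $\dim N(\sA) = \operatorname{codim} R(\sA) < \infty$, so $\operatorname{ind}\sA = 0$. (Alternatively, and perhaps cleaner, one can use the fact already established in Lemmas \ref{null1}--\ref{range1}: at the special parameter the operator $\sL = \partial_u\sF(\lambda,0,0,0)$ has one-dimensional kernel and one-codimensional range, hence index zero; and $\sL^0 - \sL$ is compact by the same argument as above since it consists of multiplication operators with a derivative gain plus finite-rank boundary terms — so $\sL^0$ is a compact perturbation of an index-zero operator and therefore itself Fredholm of index zero.)

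I expect the main obstacle to be the bookkeeping in verifying compactness of $\sK$ term-by-term, specifically checking that the coupling terms entering the $\sF_3$-slot (which only asks for $C^1$ regularity in $Y$, the least smoothing of the three PDE components) genuinely pick up a spare derivative from the mapping properties of $X$ — one must confirm that $S_i\in C^1$ and $e^{-\lambda x/2}S_e$ with $S_e\in C^2$ both embed compactly into $C^1([0,L])$, which they do, but the $S_i$-contribution is the tight one and deserves care. The second delicate point is the $\sF_4$ boundary component: one should be explicit that it is a bounded linear map $X\to\real$, hence finite rank, hence compact, so it does not affect the Fredholm index. Everything else is the routine invocation of the stability of the Fredholm index under compact perturbation together with the elementary ODE solvability facts collected above.
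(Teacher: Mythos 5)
Your overall strategy --- exhibiting $\sL^0$ as a compact perturbation of an explicitly solvable triangular operator --- is viable and genuinely different from the paper's: the paper instead proves the semi-Fredholm estimate $C\|(S_i,S_e,W)\|_X\le\|\sL^0(S_i,S_e,W)\|_Y+\|(S_i,S_e,W)\|_Z$ (with $X$ compactly embedded in $Z$) to get finite-dimensional kernel and closed range, and then obtains index zero from the local constancy of the Fredholm index on the connected set $\sO$, anchored at a trivial solution with $D(V_c')>0$ where Lemmas \ref{null1} and \ref{range1} give trivial kernel and cokernel. Your route, once repaired, avoids the connectedness argument entirely. But two of your claims are wrong as written.

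First, the coupling term $-S_i$ in the $\sF_3$-slot is \emph{not} compact: it is essentially the identity from the $C^1$-component of $X$ into the $C^1$-component of $Y$, and $C^1([0,L])$ does not embed compactly into itself --- your assertion that ``$S_i\in C^1$ \dots embeds compactly into $C^1([0,L])$'' is false (only $C^{k+1}\hookrightarrow C^k$ is compact). The repair is to keep $-S_i+e^{-\lambda x/2}S_e$ inside the principal part $\sA$, which then becomes lower-triangular rather than diagonal. Your own solve step already does this (you solve $\partial_x^2W=f_v+S_i-e^{-\lambda x/2}S_e$ \emph{after} $S_i$ and $S_e$ are determined), so the definition of $\sA$ in your second paragraph is inconsistent with the solve in your third; with the triangular $\sA$, each diagonal block is in fact an isomorphism (for the $S_e$-block note that $1+\tfrac{\lambda L}{2}>0$ kills the one candidate kernel element $Ax$), so $\sA$ is an isomorphism of $X$ onto $Y$ and the remainder is genuinely compact term by term.

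Second, the parenthetical ``cleaner'' alternative fails: $\sL^0-\sL$ is \emph{not} compact, where $\sL$ is the linearization at the trivial solution. The $\sF_1$-component of $\sL^0$ contains $k_i(\partial_xV^0+\lambda)\partial_xS_i$ while that of $\sL$ contains $k_i\lambda^*\partial_xS_i$, and their difference $k_i(\partial_xV^0+\lambda-\lambda^*)\partial_xS_i$ is a first-order operator with a generically non-vanishing continuous coefficient; such an operator is bounded but not compact from $C^1$ to $C^0$ (test it on $S_i^{(n)}(x)=n^{-1}\sin(nx)$: the images $c(x)\cos(nx)$ are bounded but admit no uniformly convergent subsequence). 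This is exactly the obstruction that forces the paper to travel through the homotopy invariance of the index on $\sO$ rather than through a single compact perturbation of the trivial-branch linearization. Delete the alternative and present only the repaired triangular decomposition.
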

\begin{proof}
For any fixed choice of $(\lambda,\rho_i^0,R_e^0,V^0)$, 
we know that $\inf_x \partial_x V^0 + \lambda > 0$.   
The operator $\sL^0=(\sL_1,\sL_2,\sL_3,\sL_4)$ acting linearly on the triple $(S_i,S_e,W)\in X$  has the form 
\begin{align}
\sL_1 = \sL_1(S_i,S_e,W) =& 
k_i \partial_x(\{\pa_x V^0 + \lambda\} S_i) + b_1\pa_x^2 W + b_2S_i + b_3S_e + b_4\pa_x W , 
\lb{sL1}\\
\sL_2 = \sL_2(S_i,S_e,W) =& 
-\pa_x^2 S_e  + a_1\pa_xS_e + b_5 S_e + b_6\pa_x^2W + b_7\pa_xW  , 
\lb{sL2}\\
\sL_3 = \sL_3(S_i,S_e,W) =&
-\pa_x ^2 W  + a_2S_i + a_3S_e ,
\lb{sL3} \\
\sL_4 = \sL_3(S_i,S_e,W) =&
\pa_x S_e(L) + (\pa_xV^0(L)+\tfrac\lambda2)S_e(L)  +  \pa_xW(L) R_e^0(L) 
\notag \\
&-\tfrac{\gamma k_i}{k_e} \exp(\tfrac{\lambda}{2}L) [ \pa_xV^0(L)+\lambda)S_i(L)  
+  \pa_xW(L)\rho_i^0(L)] , 
\lb{sL3}
\end{align}
where the coefficients $a_1=-\pa_xV^0   $, $a_2$ and $a_3$ belong to $C^1([0,L])$
and the coefficients $b_1,...,b_7$ belong to $C^0([0,L])$.

Let us first show that the linear operator $\sL^0$ has a finite-dimensional nullspace and a closed range.
By \cite[Theorem 12.12]{Wlo} or \cite[Exercise 6.9.1]{Brz}, 
it is equivalent to prove that $\sL^0$ satisfies the estimate 
\bqn  \label{ellest} 
C \|(S_i,S_e,W)\|_X  \le  \|\sL^0(S_i,S_e,W)\|_Y  +  \|(S_i,S_e,W)\|_Z 
\eqn
for all $(S_i,S_e,W)\in X$ and 
for some constant $C$ depending only on $(\lambda,\rho_i^0,R_e^0,V^0)$,
where 
\[
Z:= C^0([0,L]) \times C^0([0,L]) \times C^1([0,L]).
\]
Keeping in mind that  $\pa_x V^0 + \lambda\ge 1/j$, we see from \er{sL1} and \er{sL3} that  
$S_i$ can be estimated by 
 \begin{align}
 \|\pa_x S_i\|_{C^0} 
 &=\|(\pa_x V^0 + \lambda)^{-1}
 \left(\{\pa_x(\pa_x V^0 + \lambda)\} S_i
 +b_1\pa_x^2 W + b_2S_i + b_3S_e + b_4\pa_x W-\sL_1\right)\|_{C^0}
 \notag\\
 &\leq C(\|S_i\|_{C^0}+\|S_e\|_{C^0}+\|W\|_{C^2}+\|\sL_1\|_{C^0})
 \notag\\
 &\leq  C\|\sL^0(S_i,S_e,W)\|_Y  +  C\|(S_i,S_e,W)\|_Z.
 \lb{estsL1}
 \end{align}
 Next, \er{sL3} leads to the required estimate of $W$ as follows: 
 \begin{align}
 \|\pa_x^2 W\|_{C^1} 
 =\|a_2S_i + a_3S_e-\sL_3\|_{C^1}
 \leq  C\|\sL^0(S_i,S_e,W)\|_Y  +  C\|(S_i,S_e,W)\|_Z.
 \lb{estsL3}
 \end{align}
 We also have $\|\pa_x W\|_{C^0} \le L\|\pa_x^2W\|_{C^0}$ 
 because $\int_0^L \pa_xW(x)dx=0$. 
 
 Finally, we estimate $S_e$ as follows.  
 Due to the bounds on $S_i$ and $W$, the equation \er{sL2} 
 implies that $\pa_x^2S_e + (\pa_x V^0)\partial_xS_e$ is bounded by the right side of \eqref{ellest}.  
 Furthermore, $S_e(0)=0$ and $\pa_xS_e(L) + (\pa_xV^0(L)+\tfrac\lambda2) S_e(L)$ is also bounded.  
Thus $\pa_x\{\pa_xS_e + (\pa_x V^0)S_e \}$   is also bounded.   
Integrating from $x$ to $L$, we find that 
\[   \pa_xS_e(x) + \pa_x V^0(x)S_e(x) -  \pa_xS_e(L) + \pa_x V^0(L)S_e(L)   \]  
is also bounded,  whence $\pa_xS_e(x)$ is bounded as well.  
The preceding estimates on $S_i, W$ and $S_e$ prove \er{ellest}. 

 Owing to the fact $\lim_{V_c\to 0}D(V_c)>0$, we can find 
 a constant $V_c'>0$ such that $D(V_c')>0$.
 The preceding lemmas state that 
 the nullspace of $\partial_{(\rho_i,R_e,V)} \sF(V_c'/L,0,0,0)$ has dimension zero and 
 the codimension of its range is also zero,
 so that its index is zero.    Because $\sO$ is connected 
 and the index is a topological invariant \cite[Theorem 4.51, p166]{AA1}, 
 $\sL^0$ also has index zero.  
 This means that the codimension of $\sL^0$ is also finite.
 This completes the proof of Lemma \ref{index1}.
\end{proof}

\begin{lem}\lb{cpt1}
For each $j\in \mathbb N$, the set 
$K_j  =  \{(\lambda,\rho_i,R_e,V)\in {\sO_j};\ \sF(\lambda,\rho_i,R_e,V)=0\}$
is  compact in ${\mathbb R}\times X$.
\end{lem}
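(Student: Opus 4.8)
The plan is to show that $K_j$ is sequentially compact by extracting a convergent subsequence from any sequence $(\lambda^{(n)},\rho_i^{(n)},R_e^{(n)},V^{(n)})\in K_j$, with the limit lying in $K_j$. Since $\sO_j$ imposes the uniform bounds $\lambda^{(n)}\in[\tfrac1j,j]$, $\|(\rho_i^{(n)},R_e^{(n)},V^{(n)})\|_X\le j$, and $\pa_x V^{(n)}+\lambda^{(n)}\ge\tfrac1j$, the first step is purely a compactness-of-embedding argument: by Arzel\`a--Ascoli, $\lambda^{(n)}\to\lambda$ in $\bbr$, and the bounded sequences in $C^1,C^2,C^3$ have subsequences converging in $C^0,C^1,C^2$ respectively. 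The real content is a bootstrap: one must upgrade this weaker convergence to convergence in the full space $X=C^1\times C^2\times C^3$, and for that the equations $\sF=0$ must be used.

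The key steps, in order, are as follows. First, after passing to a subsequence, obtain $\lambda^{(n)}\to\lambda\ge\tfrac1j$, $\rho_i^{(n)}\to\rho_i$ in $C^0$, $R_e^{(n)}\to R_e$ in $C^1$, $V^{(n)}\to V$ in $C^2$; the closed condition $\pa_xV^{(n)}+\lambda^{(n)}\ge\tfrac1j$ passes to the limit, so $\pa_xV+\lambda\ge\tfrac1j>0$ and the limit lies in $\sO_j$ (which is closed). Second, bootstrap the regularity using the three equations. From $\sF_3=0$ we have $\pa_x^2 V^{(n)}=\rho_i^{(n)}-e^{-\lambda^{(n)}x/2}R_e^{(n)}$; since the right side converges in $C^0$ (indeed the $\rho_i^{(n)}$ converge in $C^0$ and the $R_e^{(n)}$ in $C^1$), and since $V^{(n)}(0)=V^{(n)}(L)=0$, elliptic (ODE) regularity gives $V^{(n)}\to V$ in $C^2$ and then, differentiating, the right side in fact converges in $C^1$ once $\rho_i^{(n)}$ is shown to converge better — so this step must be interleaved with the $\rho_i$ estimate. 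From $\sF_1=0$, solve the first-order ODE for $\rho_i^{(n)}$ explicitly (as in \er{Sol1}): since $\pa_xV^{(n)}+\lambda^{(n)}$ is bounded below and converges in $C^1$, and the inhomogeneity $k_e h(\pa_xV^{(n)}+\lambda^{(n)})e^{-\lambda^{(n)}x/2}R_e^{(n)}$ converges in $C^0$ (using $h$ analytic on $(0,\infty)$ and the lower bound), the representation formula shows $\rho_i^{(n)}\to\rho_i$ in $C^1$. Feeding the improved $\rho_i$-convergence back into $\sF_3=0$ gives $V^{(n)}\to V$ in $C^3$. Finally, from $\sF_2=0$ written as $\pa_x^2R_e^{(n)}=-(\pa_xV^{(n)})\pa_xR_e^{(n)}+\{\cdots\}R_e^{(n)}$, all coefficients now converge in $C^1$ and the data converge appropriately, so together with $R_e^{(n)}(0)=0$ and the Robin-type condition $\sF_4=0$ at $x=L$, ODE regularity yields $R_e^{(n)}\to R_e$ in $C^2$. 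Third, pass to the limit in each equation $\sF_j(\lambda^{(n)},\cdots)=0$ and in the boundary conditions; since $\sF$ is continuous on $\sO$ (indeed real-analytic, as noted in Section 3) and all the relevant derivatives converge, the limit satisfies $\sF(\lambda,\rho_i,R_e,V)=0$, hence $(\lambda,\rho_i,R_e,V)\in K_j$.

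The main obstacle is the circularity in the bootstrap: the $C^1$-convergence of $\rho_i$ relies on $C^1$-convergence of $\pa_xV+\lambda$ (i.e.\ $C^2$-convergence of $V$), while the $C^3$-convergence of $V$ needs $C^1$-convergence of $\rho_i$. This is resolved by noting that the initial Arzel\`a--Ascoli step already delivers $V^{(n)}\to V$ in $C^2$ and $R_e^{(n)}\to R_e$ in $C^1$ — which is precisely enough to run the $\sF_1$ representation formula and conclude $\rho_i^{(n)}\to\rho_i$ in $C^1$ \emph{first}, and only then upgrade $V$ to $C^3$ and $R_e$ to $C^2$. One must also check that no derivatives "escape" at the endpoints: the boundary conditions $\rho_i(0)=R_e(0)=V(0)=V(L)=0$ together with \er{rb2} (equivalently $\sF_4=0$) are all closed under the convergence obtained, so the limit stays in $X$ with the correct boundary behavior. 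A minor technical point worth stating is that $h$ and $h'$ are uniformly continuous on the compact interval $[\tfrac1j,\,2j]$ containing all values $\pa_xV^{(n)}+\lambda^{(n)}$, which legitimizes passing limits through the nonlinearity $h$.
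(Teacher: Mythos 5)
Your proposal is correct and follows essentially the same route as the paper: extract a subsequence via Arzelà--Ascoli using the $\sO_j$ bounds, then bootstrap through the equations in the order $\sF_1$ (explicit integral representation gives $\rho_i$ in $C^1$), $\sF_3$ (gives $V$ in $C^3$), and $\sF_2$ (gives $R_e$ in $C^2$), before passing to the limit in all equations and boundary conditions. Your explicit resolution of the apparent circularity matches the order of operations in the paper's proof.
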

\begin{proof}
Let  $\{(\lambda_n,\rho_{in},R_{en},V_n)\}$ be any sequence in  $K_j$. 
It suffices to show that  it has a convergent subsequence
whose limit also belongs to $K_j$.  
By the assumed bound 
$|\lambda_n|+\|(\rho_{in},R_{en},V_n)\|_{X}\leq j$, 
there exists a subsequence, still denoted by $\{(\lambda_n,\rho_{in},R_{en},V_n)\}$,
and $(\lambda,\rho_i,R_e,V)$ such that
\begin{equation}\lb{converge1}
\left\{
 \begin{array}{lllll}
 \lambda_{n} & \to & \lambda & \text{in} & \mathbb R,
 \\
 \rho_{in} & \to & \rho_i & \text{in} & C^0([0,L]),
 \\
 R_{en} & \to & R_{e} & \text{in} & C^1([0,L]),
 \\
 V_{n} & \to & V & \text{in} & C^2([0,L]).
 \end{array}
\right.
\end{equation}
Furthermore, 
\[
\partial_x V+\lambda\geq \tfrac1j.
\]
Since $\sO_j$ is closed in $X$,
it remains to show that 
\begin{gather*}
\sF_j(\lambda,\rho_i,R_e,V)=0 \qu \text{for $j=1,2,3,4,$}
\\
\rho_{in}\to \rho_i \ \text{in} \ C^1([0,L]), \qu
R_{en}\to R_e \ \text{in} \ C^2([0,L]), \qu
V_n\to V \ \text{in} \ C^3([0,L]).
\end{gather*}

\medskip

\noindent

Now the first equation $\sF_1(\lambda_n,\rho_{in},R_{en},V_n)=0$ with $\rho_{in}(0)=0$ is equivalent to
\[
 \rho_{in}(x)=\frac{k_e}{k_i}(\partial_x V_n(x)+\lambda_n)^{-1}
\int_0^x h(\partial_x V_n(y)+\lambda_n)e^{-\frac{\lambda_n}{2}y}R_{en}(y)\,dy.
\]
Taking the limit and using \er{converge1}, we see that 
\[
 \rho_i(x)=\frac{k_e}{k_i}(\partial_x V(x)+\lambda)^{-1}
\int_0^x h(\partial_x V(y)+\lambda)e^{-\frac{\lambda}{2}y}R_{e}(y)\,dy, 
\]
where the right hand side converges in $C^1([0,L])$.
Hence, we see that $\sF_1(\lambda,\rho_i,R_e,V)=0$ and $\rho_{in}\to \rho_i$ in $C^1([0,L])$.

Taking the limit using \eqref{converge1} in the third equation $\sF_3(\lambda_n,\rho_{in},R_{en},V_n)=0$ immediately leads to 
\[
 \partial_x^2 V=\rho_i-e^{-\frac{\lambda}{2}x}R_{e}.
\]
Hence $\sF_3(\lambda,\rho_i,R_e,V)=0$ and $V_n\to V$ in $C^3([0,L])$.

The second equation $\sF_2(\lambda_n,\rho_{in},R_{en},V_n)=0$ 
can be written as 
\[   \pa_x \{\pa_x R_{en} - (\pa_xV_n)R_{en}\}  = \{\tfrac{\la_n}2 +\tfrac{{\la_n}^2}{4} - h(\pa_xV_n+\la_n) \} R_{en}. 
\]
Because the right side converges in $C^1([0,L])$, we see that 
$\{\pa_x R_{en} - (\pa_xV_n)R_{en}\}$ converges in $C^2([0,L])$. 
But $(\pa_xV_n)R_{en}$ converges in $C^1([0,L])$.  
Hence $\pa_xR_{en}$ converges in $C^1([0,L])$, 
which means that $R_{en}$ converges to $R$ in $C^2([0,L])$.

It is obvious from \er{converge1} and $\sF_4(\lambda_n,\rho_{in},R_{en},V_n)=0$
that $\sF_4(\lambda,\rho_i,R_e,V)=0$ holds.
\end{proof}

As we have checked all conditions in Theorem \ref{Global0}, 
the following conclusion is valid.
\begin{thm}\lb{Global1}
Assume that the sparking voltage $V_c^\dagger$, defined by \eqref{bp1}, exists.
There exists in the open set $\sO$ a continuous curve 
${\sK}=\{(\la(s),\rho_i(s),R_e(s),V(s));s \in \mathbb R\}  \subset  \mathbb R\times X$
of stationary solutions to problem \er{sp0} such that
\begin{enumerate}[{(C}1{)}]
\item $(\la(0),\rho_i(0),R_e(0),V(0))=(V_c^\dagger/L,0,0,0)$, where $V_c^\dagger$ is defined in \er{bp1};
\item $(\rho_i(s),R_e(s),V(s))=s(\varphi_i^\dagger,\varphi_e^\dagger,\varphi_v^\dagger)+o(s)$ in the space $X$
 as $s \to 0$, where $(\varphi_i^\dagger,\varphi_e^\dagger,\varphi_v^\dagger)$ is
a basis with \er{positive0} of $N(\partial_{(\rho_i,R_e,V)}\sF(V_c^\dagger/L,0,0,0))$.
\item there exists a neighborhood $\sW$ of $(V_c^\dagger/L,0,0,0)$
and $\ve < 1$ such that
\begin{multline*}
\{(\la,\rho_i,R_e,V)\in \sW ; (\rho_i,R_e,V)\neq (0,0,0), \ \sF(\la,\rho_i,R_e,V)=0 \}
\\
=\{(\la(s),\rho_i(s),R_e(s),V(s)) ; 0<|s|<\ve\};
\end{multline*}
\item $\sK$ has a real-analytic reparametrization locally around each of its points;
\item at least one of the following four alternatives occurs:
\begin{enumerate}[(a)]
\item $\varliminf_{s \to \infty}\la(s)= 0$;
\item $\varliminf_{s \to \infty}(\inf_{x \in I}\partial_x V(x,s)+\la(s))=0$;
\item $\varlimsup_{s \to \infty}(\|\rho_i\|_{C^1}+\|R_e\|_{C^2}+\|V\|_{C^3}+\lambda)(s)=\infty$;
\item there exists $T>0$ such that
$$(\la(s),\rho_i(s),R_e(s),V(s))=(\la(s+T),\rho_i(s+T),R_e(s+T),V(s+T))$$ for all $s \in \mathbb R$.
\end{enumerate}
\end{enumerate}
Moreover, such a curve of solutions to problem \er{sp0} having the properties (C1)-(C5)
is unique (up to reparametrization).
\end{thm}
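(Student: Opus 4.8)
The plan is to deduce Theorem \ref{Global1} directly from the abstract analytic global bifurcation theorem, Theorem \ref{Global0}, applied to the Banach spaces $X$, $Y$, the open set $\sO=\bigcup_{j}\sO_j$, and the map $\sF=(\sF_1,\sF_2,\sF_3,\sF_4)$ introduced in Section \ref{S2}, with bifurcation point $\lambda^*=V_c^\dagger/L$. First I would note that the hypothesis ``the sparking voltage exists'' means exactly that $(a,b,\gamma)$ lies in the interior $A^\circ$ of the set where $D(\cdot,a,b,\gamma)$ has a positive root, and that by \er{bp1} one has $D(V_c^\dagger,a,b,\gamma)=0$. I would then verify the four hypotheses of Theorem \ref{Global0}. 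The required real-analyticity of $\sF:\sO\to Y$ was already observed in Section \ref{S2} (the $\sF_j$ are polynomial in $(\lambda,\rho_i,R_e,V)$ and their $x$-derivatives apart from the factor $h(\pa_x V+\lambda)$, which is analytic because $\pa_x V+\lambda>0$ on $\sO$). Hypothesis (H1) is immediate from the form of the $\sF_j$. Hypothesis (H2) follows from Lemmas \ref{null1} and \ref{range1}: since $D(V_c^\dagger)=0$, the nullspace $N(\sL)$ and the quotient space $Y\backslash R(\sL)$ are one-dimensional, where $\sL=\partial_{(\rho_i,R_e,V)}\sF(\lambda^*,0,0,0)$, with $N(\sL)$ generated by $u^*=(\varphi_i^\dagger,\varphi_e^\dagger,\varphi_v^\dagger)$ chosen to satisfy \er{positive0} (possible by Lemma \ref{null1}(c)--(d)), while the transversality condition is precisely Lemma \ref{trans1}. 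Hypothesis (H3) is Lemma \ref{index1}, and hypothesis (H4) is Lemma \ref{cpt1} applied to the sequence $\{\sO_j\}$ of Section \ref{S2}.

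A caveat: the transversality of Lemma \ref{trans1} holds only for almost every $(a,b,\gamma)\in A^\circ$, so in the argument we tacitly restrict to such parameter triples, consistently with Theorem \ref{mainthm0}. For those triples, all hypotheses of Theorem \ref{Global0} hold, and it produces a continuous curve $\sK=\{(\la(s),\rho_i(s),R_e(s),V(s));\,s\in\mathbb{R}\}\subset\mathbb{R}\times X$ together with its conclusions (C1)--(C5). Conclusions (C1)--(C4) of Theorem \ref{Global0} are verbatim the conclusions (C1)--(C4) claimed here (in (C3) one simply replaces the abstract small $\ve$ by $\min\{\ve,\tfrac12\}$ so that $\ve<1$; in (C2) the leading term is $u^*=(\varphi_i^\dagger,\varphi_e^\dagger,\varphi_v^\dagger)$), and the uniqueness clause transfers directly. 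It remains only to rewrite the abstract dichotomy of conclusion (C5) of Theorem \ref{Global0} in the form of the four alternatives (a)--(d).

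Alternative (II) of Theorem \ref{Global0}, periodicity with some period $T>0$, is exactly alternative (d). Suppose instead that alternative (I) holds, so that for every $j\in\mathbb{N}$ there is $s_j>0$ with $(\la(s),\rho_i(s),R_e(s),V(s))\notin\sO_j$ whenever $|s|>s_j$; in particular this holds for all $s>s_j$. Since
\[
\sO_j=\{(\lambda,\rho_i,R_e,V)\in(0,\infty)\times X;\ \lambda+\|(\rho_i,R_e,V)\|_{X}\le j,\ \lambda\ge\tfrac1j,\ \pa_x V+\lambda\ge\tfrac1j\},
\]
failure of membership means that for each $j$ and each $s>s_j$ at least one of the inequalities $\lambda(s)+\|(\rho_i(s),R_e(s),V(s))\|_{X}>j$, $\lambda(s)<1/j$, $\inf_{x\in I}(\pa_x V(x,s)+\lambda(s))<1/j$ holds. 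Now argue by contradiction: if none of (a), (b), (c) were true, there would exist $\delta>0$, $M<\infty$, and $s_0$ such that $\lambda(s)>\delta$, $\inf_{x\in I}(\pa_x V(x,s)+\lambda(s))>\delta$, and $\lambda(s)+\|(\rho_i(s),R_e(s),V(s))\|_{X}<M$ for all $s>s_0$ (using that $\|(\rho_i,R_e,V)\|_{X}$ is equivalent to $\|\rho_i\|_{C^1}+\|R_e\|_{C^2}+\|V\|_{C^3}$, so that (c) is the negation of the boundedness of $\lambda+\|(\rho_i,R_e,V)\|_{X}$ as $s\to\infty$); choosing $j$ with $1/j<\delta$ and $j>M$ then contradicts the previous sentence for every $s>\max\{s_0,s_j\}$. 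Hence at least one of (a), (b), (c) holds, and the identification of the alternatives is complete.

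The proof is therefore essentially organizational: all the analytic substance is contained in Lemmas \ref{null1}--\ref{cpt1} and in Theorem \ref{Global0}. The only step that requires genuine care is the last one — matching the single abstract ``the curve eventually leaves each bounded set $\sO_j$'' alternative with the three physically meaningful degenerations (the reduced voltage slope $\la$ tending to $0$, the total field $\pa_x V+\la$ failing to stay positive somewhere, or the solution norm blowing up), making sure in particular that the three inequalities defining $\sO_j$ are negated correctly and that the one-sided $\varliminf$/$\varlimsup$ statements as $s\to\infty$ really do follow from alternative (I).
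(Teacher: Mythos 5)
Your proposal is correct and follows essentially the same route as the paper: the paper's proof of Theorem \ref{Global1} consists precisely of invoking Theorem \ref{Global0} after hypotheses (H1)--(H4) have been verified in Lemmas \ref{null1}--\ref{cpt1}, exactly as you do. Your explicit unpacking of alternative (I) into the three degenerations (a)--(c) via the defining inequalities of $\sO_j$, and your remark that the transversality (hence the whole statement) tacitly holds only for almost every $(a,b,\gamma)$, are both correct and merely make explicit what the paper leaves implicit.
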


Conditions (C1)-(C3) are an expression of the local bifurcation, while (C4)-(C5) are assertions about the 
global curve $\sK$.  Alternative (c)  asserts that $\sK$ may be unbounded.  
Alternative (d) asserts that $\sK$ may form a closed curve (a `loop').

\section{Positive Densities}
Of course, we should keep in mind that for the physical problem  $\rho_i$ and $R_e$ are densities 
of particles and so they should be non-negative. 
In this section we investigate the part of the curve $\sK$ that corresponds to such densities. 
We will often suppress the variable $x$, as in $\rho_i(s)=\rho_i(s,\cdot), R_e(s)=R_e(s,\cdot), V(s)=V(s,\cdot)$.  

A basic observation is the following theorem, which states that {\it either (i)} 
$\rho_i$ and $R_e$ remain positive {\it or (ii)} the curve of positive solutions 
forms a half-loop going from $V_c^\dagger$ to some other voltage $V_c^\ddagger$.
Here $V_c^\dagger$ is defined in \er{bp1} 
and $V_c^\ddagger$ is a voltage with \er{positive1} and $V_c^\dagger<V_c^\ddagger$.
We remark that the curve $\sK$ is never the half-loop 
unless a voltage $V_c^\ddagger > V_c^\dagger$ exists satisfying \er{positive1}.

\begin{thm}\lb{Global2}
Assume the sparking voltage $V_c^\dagger$ exists.  
For the global bifurcation curve $\sK = (\la(s),\rho_i(s),R_e(s),V(s))$ in Theorem \ref{Global1},
one of the following two alternatives occurs.
\begin{enumerate}[(i)] 
\item  $\rho_i(s,x)>0$ and $R_e(s,x)>0$ \  for all $0<s<\infty$ and $x \in (0,L]$.
\item there exists a voltage $V_c^\ddagger$ satisfying  \er{positive1} and $V_c^\dagger<V_c^\ddagger$  
and a finite parameter value $s^\ddagger>0$ such that 
\begin{enumerate}[(1)]
\item $\rho_i(s,x)>0$ and $R_e(s,x)>0$ \ for all $s\in(0,s^\ddagger)$ and $x \in (0,L]$;
\item $(\la(s^\ddagger),\rho_i(s^\ddagger),R_e(s^\ddagger),V(s^\ddagger))=(V_{c}^\ddagger/L,0,0,0)$;
\item $(\rho_i(s),R_e(s))=(s^\ddagger-s)(\varphi_i^\ddagger,\varphi_e^\ddagger)+o(|s-s^\ddagger|)$
as $s\nearrow s^\ddagger$, where $(\varphi_i^\ddagger,\varphi_e^\ddagger)$ 
is a basis with \er{positive0} of $N(\partial_{(\rho_i,R_e,V)}\sF(V_c^\ddagger/L,0,0,0))$;
\item  $\rho_i(s,x)<0$ and $R_e(s,x)<0$ \ for $0<s-s^\ddagger\ll1$ and $x \in (0,L]$.
\end{enumerate}
\end{enumerate}
\end{thm}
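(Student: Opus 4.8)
The plan is to track the sign of the densities along the global curve $\sK$ starting from $s=0^+$, where Theorem \ref{Global1}(C2) together with the positivity \er{positive0} of the basis $(\varphi_i^\dagger,\varphi_e^\dagger,\varphi_v^\dagger)$ guarantees that $\rho_i(s,x)>0$ and $R_e(s,x)>0$ for all $x\in(0,L]$ once $s>0$ is small. Define $s^\ddagger := \sup\{\sigma>0 \ ;\ \rho_i(s,x)>0 \text{ and } R_e(s,x)>0 \text{ for all } s\in(0,\sigma),\ x\in(0,L]\}$. If $s^\ddagger=\infty$ we are in alternative (i), so assume $s^\ddagger<\infty$; then by continuity $\rho_i(s^\ddagger,\cdot)\ge 0$ and $R_e(s^\ddagger,\cdot)\ge 0$, and at least one of them must vanish somewhere in $(0,L]$. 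The heart of the argument is to show that the only way this can happen is that $(\rho_i(s^\ddagger),R_e(s^\ddagger),V(s^\ddagger))$ is itself a trivial solution, i.e. $(0,0,0)$, with some voltage $V_c^\ddagger/L=\la(s^\ddagger)$.

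The first key step is a strong-maximum-principle / Harnack-type argument applied to the scalar equation \er{re2} for $R_e$. Since $(\rho_i(s^\ddagger),R_e(s^\ddagger),V(s^\ddagger))\in\sO$ solves \er{re2}, which can be rewritten (as in the proof of Lemma \ref{cpt1}) as a linear second-order ODE $-\pa_x^2 R_e + (\text{bounded})\pa_x R_e + (\text{bounded})R_e = 0$ with $R_e(0)=0$, a nonnegative solution that touches zero at some interior or right-endpoint point must vanish identically unless it is strictly positive on $(0,L]$; one has to handle the Robin boundary condition \er{rb2} at $x=L$ (using also $\rho_i\ge0$ there) to rule out a zero exactly at $x=L$ with nonzero derivative. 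Hence either $R_e(s^\ddagger,\cdot)\equiv 0$, or $R_e(s^\ddagger,x)>0$ on $(0,L]$; in the latter case the integral representation \er{re1}-type formula (the $\sF_1=0$ equation solved with $\rho_i(0)=0$, exactly as displayed in Lemma \ref{cpt1}) forces $\rho_i(s^\ddagger,x)>0$ on $(0,L]$ as well, contradicting the definition of $s^\ddagger$. Therefore $R_e(s^\ddagger,\cdot)\equiv0$; the representation formula then gives $\rho_i(s^\ddagger,\cdot)\equiv0$, and \er{re3} with $V(0)=V(L)=0$ gives $V(s^\ddagger,\cdot)\equiv0$. This establishes conclusion (2) with $V_c^\ddagger := L\,\la(s^\ddagger)$.

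Next I would verify the remaining assertions. That $V_c^\ddagger>V_c^\dagger$: if instead $\la(s^\ddagger)=\la^*=V_c^\dagger/L$, then $(\la(s^\ddagger),0,0,0)$ coincides with the bifurcation point, and by the local uniqueness statement (C3) of Theorem \ref{Global1} the curve near $s^\ddagger$ would have to coincide with the initial branch, which is impossible since $s^\ddagger>0$ is the \emph{first} return to a trivial solution and the curve left a neighborhood of $(\la^*,0,0,0)$; one also needs $\la(s^\ddagger)\neq\la^*$ to avoid a cheap contradiction, and since $(\la(s^\ddagger),0,0,0)$ is a trivial solution at which a nontrivial branch accumulates, Lemma \ref{null1}(b) forces $D(V_c^\ddagger)=0$, so $V_c^\ddagger$ is a root of $D$ distinct from $V_c^\dagger$, hence (being positive and $\ge$ the smallest root) $V_c^\ddagger>V_c^\dagger$. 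That $V_c^\ddagger$ satisfies \er{positive1}: because the densities approach zero through \emph{positive} values as $s\nearrow s^\ddagger$, the limiting direction lies in $N(\partial_{(\rho_i,R_e,V)}\sF(V_c^\ddagger/L,0,0,0))$ and is (componentwise) nonnegative, so by Lemma \ref{null1}(c) the condition $g(V_c^\ddagger)<\pi^2/L^2$ holds; this simultaneously yields conclusion (3), the asymptotic expansion with a basis satisfying \er{positive0}, once one checks that the tangent vector to $\sK$ at $s^\ddagger$ is nonzero and lies in the nullspace — which follows by differentiating $\sF(\la(s),u(s))=0$ and using that $\partial_u\sF(V_c^\ddagger/L,0,0,0)$ has one-dimensional kernel (Lemma \ref{null1}) together with the transversality already established in Lemma \ref{trans1} applied at $V_c^\ddagger$ (which is legitimate since $V_c^\ddagger$ also satisfies \er{positive1} and lies in the full-measure set $\sA$; alternatively one re-runs the argument of (C2) in Theorem \ref{Global1} with the roles of $\pm s$ reversed). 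Finally, conclusion (4), that $\rho_i,R_e<0$ just past $s^\ddagger$, is the mirror image of the $s\to 0^+$ analysis: the expansion in (3) with the sign $(s^\ddagger-s)$ and the positivity of $(\varphi_i^\ddagger,\varphi_e^\ddagger)$ on $(0,L]$ force both densities to be strictly negative on $(0,L]$ for $0<s-s^\ddagger\ll 1$.

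The main obstacle I anticipate is the boundary behavior at $x=L$ in the maximum-principle step: showing that a nonnegative, not-identically-zero $R_e(s^\ddagger,\cdot)$ cannot have a zero exactly at the cathode. One must exploit the Robin condition \er{rb2} — at a boundary zero $R_e(L)=0$ with $\rho_i(L)\ge0$ it forces $\pa_x R_e(L)\le 0$, while Hopf's lemma (applicable since the ODE coefficients are bounded and the right-hand side controllable) would give $\pa_x R_e(L)<0$ strictly if $R_e>0$ inside, and then re-examining the sign in \er{rb2} to derive a contradiction unless $\rho_i(L)$ is also forced to be positive, which recovers the previous branch. Pinning down this dichotomy cleanly, uniformly in $s$ near $s^\ddagger$, is the delicate part; everything else is a bookkeeping exercise combining the representation formulas, Lemma \ref{null1}, and Lemma \ref{trans1}.
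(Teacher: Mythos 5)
Your strategy is essentially the paper's: locate the first parameter value $s^\ddagger$ at which positivity fails, show the solution there must be trivial, identify $V_c^\ddagger$ as a root of $D$ satisfying \eqref{positive1} via the nodal structure of the nullspace, and exclude $V_c^\ddagger=V_c^\dagger$ by the loop/negativity argument from (C2)--(C4) of Theorem \ref{Global1}. Two points need repair. First, your claim that at $s=s^\ddagger$ the density ``must vanish somewhere in $(0,L]$'' is not justified: the zeros $R_e(s_n,x_n)=0$ with $s_n\searrow s^\ddagger$ may satisfy $x_n\searrow 0$, in which case $R_e(s^\ddagger,\cdot)$ vanishes only at $x=0$, which is automatic from the boundary condition and by itself gives nothing. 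The paper treats this case separately: Rolle's theorem yields $y_n\in(0,x_n)$ with $\partial_xR_e(s_n,y_n)=0$, hence $\partial_xR_e(s^\ddagger,0)=0$, and the Cauchy data $R_e(s^\ddagger,0)=\partial_xR_e(s^\ddagger,0)=0$ force $R_e(s^\ddagger)\equiv0$ by uniqueness for the linear second-order ODE $\sF_2=0$. The same elementary device disposes of an interior zero and of a zero at $x=L$: there one combines $\partial_xR_e(s^\ddagger,L)\le0$ (minimum at the endpoint) with $\partial_xR_e(s^\ddagger,L)=\gamma\frac{k_i}{k_e}e^{\lambda L/2}(\partial_xV(L)+\lambda)\rho_i(s^\ddagger,L)\ge0$ from $\sF_4=0$ and \eqref{R_i1}, so the Hopf-lemma step you flag as delicate can be bypassed entirely in favor of ODE uniqueness.

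Second, Lemma \ref{trans1} is a statement about $V_c^\dagger$, the \emph{smallest} root of $D$, valid for a.e.\ $(a,b,\gamma)$; it does not transfer automatically to the larger root $V_c^\ddagger$, so you cannot simply ``apply Lemma \ref{trans1} at $V_c^\ddagger$'' to obtain the expansion in (3). The paper instead gets (3) from the Crandall--Rabinowitz structure at the crossing point \cite{CR1} together with the one-dimensionality of the nullspace (Lemma \ref{null1}) and the analyticity (C4) of $\sK$; your fallback suggestion of re-running the local analysis at $V_c^\ddagger$ with the roles of $\pm s$ reversed is the correct route. With these two repairs the argument matches the paper's.
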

\begin{proof}
First let us define  
\begin{equation}   \label{Tddagger}
 s^\ddagger:=\inf\{s>0 : R_e(s,x_0)=0 \ \text{for some $x_0 \in (0,L]$}\}.
\end{equation}    
Clearly $R_e>0$ in $(0,s^\ddagger) \times (0,L]$.     
By (C2) in Theorem \ref{Global1}, $s^\ddagger>0$. 
				If $s^\ddagger=\infty$, 
then $R_e>0$ in $(0,\infty) \times (0,L]$.
Also  $\partial_x V+\lambda$ is positive owing to 
$(\la(s),\rho_i(s),R_e(s),V(s)) \in \sO$. 
Then the following formula from \eqref{re1}
also yields $\rho_i>0$.
\begin{equation}\label{R_i1}
\rho_i(x)=\frac{k_e}{k_i}(\partial_x V(x)+\lambda)^{-1}
\int_0^x h(\partial_x V(y)+\lambda)e^{-\frac{\lambda}{2}y}R_{e}(y)\,dy. 
\end{equation}
Thus alternative {\it (i)} is valid.  

Assuming that $s^\ddagger<\infty$, we will show that {\it (ii)} happens.  
First we will show that $R_e(s^\ddagger,\cdot)$ vanishes identically.  
Certainly 
$R_e(s^\ddagger,\cdot)$ takes the value zero, which is its minimum, 
at some point $x_0\in \bar{I}=[0,L]$. 
In  case $x_0 \in I$, $\partial_x R_e(s^\ddagger,x_0)=0$ also holds.
Solving $\sF_2(\lambda,\rho_i,R_e,V)=0$ with $R_e(s^\ddagger,x_0)=\partial_x R_e(s^\ddagger,x_0)=0$,
we see by uniqueness that $R_e(s^\ddagger)\equiv 0$.  
Secondly, in case $x_0=0$, by \er{Tddagger} there exists a sequence $\{(s_n,x_n)\}_{n \in \mathbb N}$ such that 
$R_e(s_n,x_n)=0$ with $s_n \searrow s^\ddagger$ and $x_n \searrow 0$.
Rolle's theorem ensures that there also exists some $y_n\in(0,x_n)$ 
such that $\partial_x R_e(s_n,y_n)=0$.
Letting $n\to \infty$, we see that $y_n\to 0$ and thus $\partial_x R_e(s^\ddagger,0)=0$.
Hence we again deduce  by uniqueness that $R_e\equiv 0$.  
Thirdly, in case $x_0=L$, it is obvious that $\partial_x R_e(s^\ddagger,L)\leq 0$.
On the other hand, we see from $\sF_4=0$ and \er{R_i1} that
\[
\partial_xR_e(s^\ddagger,L)=\gamma\frac{k_i}{k_e}e^{\frac{\la}{2}L}
\left(\partial_x V(L)+{\la}\right)\ro_i(s^\ddagger,L)\geq 0.  
\]
This leads to $\partial_x R_e(s^\ddagger,L)=0$ so that $R_e\equiv 0$ once again.  
Therefore we conclude that $R_e\equiv 0$ in every case.  
By \er{R_i1}, we also have $\rho_i\equiv 0$ and thus $V\equiv0$.
Hence $(\rho_i,R_e,V)(s^\ddagger)=(0,0,0)$ is the trivial solution.
So (1) and (2) in the theorem are valid. 

Continuing to assume that $s^\ddagger<\infty$, we now know that $\rho_i$, $R_e$ and $V$ are 
identically zero at $s=s^\ddagger$.  
We define $V_c^\ddagger = L\ \la(s^\ddagger)$.  
By the simple bifurcation theorem of \cite{CR1}, the nullspace 
$\sN = N[\partial_{(\rho_i,R_e,V)} \sF(\la(s^\ddagger),0,0,0)]$ is non-trivial 
because the curve $\sK$ crosses the trivial curve transversely at $s=s^\ddagger$.   
So by Lemma \ref{null1}, we have $D(V_c^\ddagger)=0$.   
It remains to prove (3) and (4)  and also that $V_c^\ddagger > V_c^\dagger$ 
and  $g(V_c^\ddagger) \le \frac{\pi^2}{L^2}$ .

Suppose on the contrary that $g(V_c^\ddagger) > \frac{\pi^2}{L^2}$.   
Then as in the proof of Lemma \ref{null1}, the nullspace $\sN$ has a basis $(\varphi_i,\varphi_e,\varphi_v)$ 
with 
\[
{\varphi}_e(x) = \sin\sqrt{g({V}_c^\ddagger)} x,
\quad
\sqrt{g({V}_c^\ddagger)} >\frac{\pi}{L}.
\]
In that case the function $\varphi_e$ has a node (changes its sign) in the interval $I$.  
Therefore $R_e(s,\cdot)$ also has a node for $s$ near  $s^\ddagger$, Theorem
which contradicts the positivity.  
Thus $g(V_c^\ddagger) \le \frac{\pi^2}{L^2}$ so that the basis of $\sN$ is positive,  
due to  Lemma \ref{null1}.   Thus (3) and (4) are valid.  

Finally, suppose that $V_c^\ddagger = V_c^\dagger$.  
Then $\la(s^\ddagger) = {V_c^\dagger}/L$, 
so that the curve $\sK$ goes from the point $P=({V_c^\dagger}/L,0,0,0)$ at $s=0$ 
to the same point $P$ at $s={s^\ddagger}$.   
By (C3) and (C4) of Theorem \ref{Global1},   
$\sK$ is a simple curve at $P$ and is real-analytic. 
So the only way $\sK$ could go from $P$ to $P$ would be if it were a loop with the part with 
$s$ approaching $s^\ddagger$ from below coinciding 
with the part with $s$ approaching $0$ from below ($s<0$).  
By (C2) of Theorem \ref{Global1}, $\rho_i(s,\cdot)$ and $R_e(s,\cdot)$ would be negative 
for $-1\ll s-s^\ddagger<0$, which would contradict their positivity.  
Hence $V_c^\ddagger >V_c^\dagger$.
\end{proof}

Since $\rho_i$ and $R_e e^{-V_cx/2L}$ are the densities of the ions and electrons, respectively, 
we are interested only in the positive solutions.  
Let us investigate in detail the case 
that the global positivity alternative (i) in Theorem \ref{Global2} occurs.
More precisely, the next three lemmas show that 
if any one of the alternatives (a) or (b) in Theorem \ref{Global1} occurs,
then alternative (c) also occurs.
In these proofs, we use the written boundary condition from \er{rb2} and \er{R_i1}: 
\begin{equation}\lb{bc0}
\partial_xR_e(L)=-\left(\partial_x V(L)+\frac{\la}{2}\right)R_e(L)
+\gamma e^{\frac{\la}{2}L}
\int_0^L h\left(\partial_x V(x)+\lambda\right)e^{-\frac{\lambda}{2}x}R_e(x) \,dx.
\end{equation}
and the elementary Poincar\'e inequality
\begin{equation}\lb{Poincare}
\|u\|_{L^2} \leq \sqrt L \|\partial_x u\|_{L^2}
\quad \text{for $u \in \{f \in H^1(I); \ f(0)=0\}$}.
\end{equation}

\begin{lem}\lb{lem(a)}
Assume alternative (i) in Theorem \ref{Global2}.
If $\varliminf_{s \to \infty}\la(s)= 0$, then 
$\sup_{s>0}\|V(s)\|_{C^2}$ is unbounded.
\end{lem}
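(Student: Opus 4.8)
The plan is to argue by contradiction: suppose $\la(s_n)\to 0$ along some sequence $s_n\to\infty$ while $\|V(s_n)\|_{C^2}$ stays bounded. The idea is that if both the voltage and the field correction $V$ are small, then the whole nonlinear system degenerates to a linear eigenvalue problem that has no nontrivial positive solution for small $\la$, contradicting the positivity in alternative (i) together with the fact that $(\rho_i(s),R_e(s),V(s))\ne(0,0,0)$ for $s>0$.

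First I would normalize. Since the equation $\sF_2=0$ is linear in $R_e$ (with coefficients depending on $V$ and $\la$), and $\sF_1=0$, $\sF_3=0$ express $\rho_i$ and $V$ in terms of $R_e$ via \er{R_i1} and integration of \er{re3}, the natural move is to rescale $R_e(s_n)$ to have, say, $\|R_e(s_n)\|_{L^2}=1$ (it is nonzero by positivity), and correspondingly rescale $\rho_i(s_n)$ and $V(s_n)$. Call the rescaled triple $(\hat\rho_i^n,\hat R_e^n,\hat V^n)$. Using the assumed bound $\|V(s_n)\|_{C^2}\le C$ one gets that the rescaling factor keeps $\hat V^n$ and $\hat\rho_i^n$ under control: from \er{R_i1}, $\|\rho_i(s_n)\|$ is controlled by $\|R_e(s_n)\|$ times a constant depending on $\la(s_n)^{-1}$ — here I must be careful, since $\la(s_n)\to 0$ makes the prefactor $(\pa_xV+\la)^{-1}$ blow up. So instead I would work directly with the quantity $q_n:=(\pa_xV(s_n)+\la(s_n))\rho_i(s_n)$, which by \er{re1} satisfies $k_i\pa_x q_n = k_e h(\pa_xV(s_n)+\la(s_n))e^{-\la_n x/2}R_e(s_n)$ with $q_n(0)=0$, and note $h(\sigma)=a\sigma e^{-b/\sigma}\to 0$ as $\sigma\to 0^+$ uniformly on $[0,\pa_xV+\la]$ when that quantity is small. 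This is the key: as $\la_n\to 0$ and $\|V(s_n)\|_{C^2}$ bounded, $\pa_xV(s_n)+\la_n$ need not be small, but I can split into the region where it is small and use $h\to0$, versus the region where $\pa_xV(s_n)$ is bounded away from $-\la_n$.

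The cleanest route is probably: after rescaling so $\|R_e(s_n)\|_{C^2}=1$ (use the a priori elliptic-type control from $\sF_2=0$ to upgrade $L^2$ to $C^2$ on a subsequence), extract limits $R_e(s_n)\to R_e^\infty$ in $C^1$, $V(s_n)\to V^\infty$ in $C^1$, $\la_n\to 0$. Taking the limit in $\sF_2(\la_n,\rho_i(s_n),R_e(s_n),V(s_n))=0$ — after writing it as $\pa_x\{\pa_x R_e - (\pa_xV)R_e\} = \{\tfrac{\la}{2}\pa_xV - \pa_x^2 V + \tfrac{\la^2}4 - h(\pa_xV+\la)\}R_e$ as in Lemma \ref{cpt1} — and using $\pa_x^2 V(s_n) = \rho_i(s_n) - e^{-\la_n x/2}R_e(s_n)$, one must identify the limiting equation for $R_e^\infty$. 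Then the limiting boundary condition \er{bc0} is inherited. The expected conclusion is that $R_e^\infty$ solves a homogeneous linear problem that forces $R_e^\infty\equiv 0$ (because $\la\to0$ pushes the relevant "eigenvalue" $g(V_c)\to 0^-$ and the $\gamma$-term coefficient $h(\la)\to0$, leaving $-\pa_x^2 R_e^\infty = (\text{bounded})R_e^\infty$ with $R_e^\infty(0)=0$ and a boundary condition at $L$ that has no nontrivial solution) — contradicting $\|R_e^\infty\|=1$.

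The main obstacle, and where I would spend the most care, is controlling the term $\pa_x^2 V(s_n)$ in the coefficient of the $\sF_2$ equation: it equals $\rho_i(s_n)-e^{-\la_n x/2}R_e(s_n)$, and while $R_e(s_n)$ is bounded after rescaling, $\rho_i(s_n)$ is only bounded if the $(\pa_xV+\la)^{-1}$ factor in \er{R_i1} is. Near a point where $\pa_xV(s_n)\to -\la_n\to 0$ this degenerates. The resolution should again be that $h(\pa_xV(s_n)+\la_n)\to 0$ exactly there, so the product $h(\pa_xV+\la)e^{-\la x/2}R_e$ in the numerator of \er{R_i1} vanishes fast enough to compensate; making this quantitative (e.g. $|h(\sigma)/\sigma|\le a e^{-b/\sigma}\to 0$ as $\sigma\to 0^+$, so $q_n/(\pa_xV+\la)\to 0$ uniformly on the bad region) is the crux of the argument. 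Once $\rho_i(s_n)$ is shown bounded (or even $\to 0$) in $C^0$ on the whole interval, the rest is a routine compactness-and-limit argument followed by the uniqueness/nonexistence statement for the limiting linear ODE boundary value problem. I would also double-check the borderline where $\pa_xV(s_n)+\la_n$ could be small on a set of positive measure rather than at isolated points; there the same estimate $h(\sigma)=o(\sigma)$ as $\sigma\to0^+$ still does the job.
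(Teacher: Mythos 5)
There is a genuine gap, and it is exactly at the point you flag as the ``crux.'' You assert that ``$\pa_xV(s_n)+\la_n$ need not be small'' and build the whole strategy (region splitting, rescaling, a limiting eigenvalue problem) around coping with that. But the opposite is true, and noticing it is the one idea the proof actually needs: membership in $\sO$ forces $\pa_x V(s_n,x)+\la(s_n)>0$ for \emph{every} $x$, and the boundary conditions $V(0)=V(L)=0$ force $\int_0^L \pa_x V(s_n)\,dx=0$. Extracting a $C^1$-convergent subsequence from the assumed $C^2$ bound, the limit $V^*$ satisfies $\pa_x V^*\ge 0$ (since $\la(s_n)\to0$) and has zero mean derivative, hence $V^*\equiv 0$. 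Consequently $\pa_x V(s_n)+\la(s_n)\to 0$ \emph{uniformly on $[0,L]$}, so $h(\pa_xV(s_n)+\la(s_n))\to 0$ uniformly — there is no ``good region'' where this coefficient survives. Without this observation your endgame does not close: on a region where $\pa_xV+\la$ stays bounded below, $h(\pa_xV+\la)$ does not vanish, and a limiting problem of the form $-\pa_x^2R_e^\infty=(\text{bounded})R_e^\infty$ with $R_e^\infty(0)=0$ and a Robin condition at $L$ can perfectly well admit a nontrivial positive solution — that is precisely what happens at the sparking voltage, so ``no nontrivial solution'' cannot be had for free.

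Two further points. First, the normalization $\|R_e(s_n)\|=1$ is both unnecessary and hazardous: if you rescale $\rho_i$ and $V$ along with $R_e$ to preserve $\sF_3=0$, the a priori $C^2$ bound on $V(s_n)$ gives no bound on the rescaled potential when $\|R_e(s_n)\|\to0$; if you do not rescale $V$, the coupling $\pa_x^2V=\rho_i-e^{-\la x/2}R_e$ is destroyed. Second, once one knows $h(\pa_xV(s_n)+\la(s_n))$, $\la(s_n)$ and $V(s_n)$ are uniformly small, no limit equation is needed at all: the paper multiplies $\sF_2=0$ by $R_e(s_n)$, integrates by parts using $R_e(s_n,0)=0$ and the boundary identity \er{bc0}, and absorbs all terms by Sobolev and Poincar\'e to get $\int_0^L(\pa_xR_e)^2(s_n)\,dx\le\tfrac12\int_0^L(\pa_xR_e)^2(s_n)\,dx$ for $n$ large, whence $R_e(s_n)\equiv0$ \emph{exactly}, contradicting positivity. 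Your worry about $h(\sigma)/\sigma$ as $\sigma\to0^+$ is relevant to the companion Lemma \ref{lem(b)}, not here.
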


\begin{proof}
On the contrary suppose that $\sup_{s>0}\|V(s)\|_{C^2}$ is bounded.
Because $\varliminf_{s \to \infty}\la(s)= 0$ and $(\partial_x V+\la)(s,x)>0$,
there exists a sequence $\{s_n\}_{n \in \mathbb N}$ 
and limits $(\la^*,V^*)$ such that 
\begin{gather}
\left\{
\begin{array}{lllll}
 \lambda(s_n) & \to & 0 & \text{in} & \mathbb R,
 \\
 V(s_n) & \to & V^* & \text{in} & C^1([0,L]),
 \end{array}\right.
\lb{converge4}\\
V^*(0)=V^*(L)=0, 
\lb{boundary2}\\
\partial_x V^*\geq 0.
\lb{p2}
\end{gather}
The boundary condition \er{boundary2} means that $\int_0^L \partial_x V^*(x)\,dx =0$.
This together with \er{p2} implies $\partial_x V^* \equiv 0$.
Using \er{boundary2} again, we have $V^* \equiv 0$.

It follows that
for suitably large $n$ the three  expressions $\|h(\partial_xV(s_n)+\la(s_n))\|_{C^0}$, $|\la(s_n)|$ and 
$\|V(s_n)\|_{C^2}$, 
are arbitrarily small.  
Multiplying $\sF_2(\la(s_n),\rho_i(s_n),R_e(s_n),V(s_n))=0$ by $R_e(s_n)$ leads to
\begin{align*}
(\partial_x R_e)^2(s_n)&=\partial_x\left\{R_e(s_n) \partial_x R_e(s_n) +\partial_x V(s_n)R_e^2(s_n) \right\}
-3\partial_x V(s_n)R_e(s_n)\partial_xR_e(s_n)
\\
&\quad +\left\{
\frac{\lambda(s_n)}{2}\partial_x V(s_n)
+\frac{\lambda^2(s_n)}{4}
-h\left(\partial_x V(s_n)+\lambda(s_n)\right)\right\}R_e^2(s_n).
\end{align*}
Then integrating this by parts over $[0,L]$, using $R_e(s_n,0)=0$, 
and rewriting $\partial_x R_e(s_n,L)$ by \er{bc0}, we  have 
\begin{align*}
\int_0^L (\partial_x R_e)^2(s_n) \,dx
&=-\left(\partial_x V(s_n,L)+\frac{\la(s_n)}{2}\right)R_e^2(s_n,L)
\\
&\quad +\gamma e^{\frac{\la(s_n)}{2}L}R_e(s_n,L)
\int_0^L h\left(\partial_x V(s_n)+\lambda(s_n)\right)e^{-\frac{\lambda(s_n)}{2}x}R_e(s_n) \,dx
\\
&\quad -3\int_0^L
\partial_x V(s_n)R_e(s_n)\partial_xR_e(s_n)\,dx 
+\partial_x V(s_n,L)R_e^2(s_n,L)
\\
&\quad -\int_0^L\left\{
\frac{\lambda(s_n)}{2}\partial_x V(s_n)
+\frac{\lambda^2(s_n)}{4}
-h\left(\partial_x V(s_n)+\lambda(s_n)\right)\right\}R_e^2(s_n)\,dx 
\\
&\leq \frac{1}{2}\int_0^L (\partial_x R_e)^2(s_n) \,dx,
\end{align*}
where we also have used 
Sobolev's and Poincar\'e's inequalities 
and taken $n$ suitably large in deriving the last inequality.
Hence $\partial_xR_e(s_n) \equiv0$.  
Since $R_e$ vanishes at $x=0$, we conclude that $R_e(s_n)\equiv 0$,   
which contradicts the assumed positivity. 
\end{proof}

\begin{lem}\lb{lem(b)}
Assume alternative (i) in Theorem \ref{Global2}.
If $\varliminf_{s \to \infty}\{\inf_{x \in I}\partial_x V(s,x)+\la(s)\}=0$, 
then $\sup_{s>0}\{\|\rho_i(s)\|_{C^0}+\|R_e(s)\|_{C^2}+\|V(s)\|_{C^2}+\lambda(s)\}$ is unbounded.
\end{lem}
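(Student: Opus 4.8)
I would argue by contradiction, in the spirit of the proof of Lemma~\ref{lem(a)}: produce a limiting stationary solution that sits on the boundary $\partial\sO=\{\partial_x V+\lambda=0\}$, and show it must be trivial, which is incompatible with $\lambda^*>0$. Suppose $M:=\sup_{s>0}\{\|\rho_i(s)\|_{C^0}+\|R_e(s)\|_{C^2}+\|V(s)\|_{C^2}+\lambda(s)\}<\infty$. The hypothesis furnishes $s_n\uparrow\infty$ and points $x_n\in[0,L]$ at which $x\mapsto\partial_x V(s_n,x)+\lambda(s_n)$ attains its minimum, with $\partial_x V(s_n,x_n)+\lambda(s_n)\to0$. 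Using the $C^2$-bounds, the Poisson identity $\partial_x^2V=\rho_i-e^{-\lambda x/2}R_e$, and the equation $\sF_2=0$ to upgrade regularity, I pass to a subsequence with $\lambda(s_n)\to\lambda^*\ge0$, $x_n\to x^*$, $V(s_n)\to V^*$ in $C^2$, $R_e(s_n)\to R_e^*$ in $C^2$, and $N_n:=\int_0^{\cdot}h(\partial_x V(s_n)+\lambda(s_n))e^{-\frac{\lambda(s_n)}{2}y}R_e(s_n)\,dy\to N^*$ in $C^1$. Then $\partial_x V^*+\lambda^*\ge0$ on $[0,L]$ with equality at $x^*$, $R_e^*\ge0$, and $(\lambda^*,\rho_i^*,R_e^*,V^*)$ solves $\sF_j=0$ together with \er{rb1}--\er{rb2}, with $\rho_i^*$ given by \er{R_i1} wherever $\partial_x V^*+\lambda^*>0$. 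The key observation is that at any zero $z$ of $\partial_x V^*+\lambda^*$ one must have $N^*(z)=0$, since otherwise $\rho_i(s_n,z)=\tfrac{k_e}{k_i}N_n(z)/(\partial_x V(s_n,z)+\lambda(s_n))\to\infty$, violating $\rho_i(s_n,z)\le M$; as $N^*$ is nondecreasing and $N^*(x^*)=0$, this forces $N^*\equiv0$ and $h(\partial_x V^*+\lambda^*)\,R_e^*\equiv0$ on $[0,x^*]$.

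If $\lambda^*=0$, then $\partial_x V^*\ge0$ with $V^*(0)=V^*(L)=0$ gives $V^*\equiv0$, hence $h(\partial_x V(s_n)+\lambda(s_n))\to0$ uniformly, and the energy identity obtained by multiplying $\sF_2(s_n)=0$ by $R_e(s_n)$, integrating by parts, eliminating $\partial_x R_e(s_n,L)$ by \er{bc0}, and using \er{Poincare} and Sobolev exactly as in Lemma~\ref{lem(a)} forces $R_e(s_n)\equiv0$ for large $n$, contradicting (i); in fact, by Lemma~\ref{lem(a)} and $M<\infty$, alternative (a) of Theorem~\ref{Global1} fails, so $\varliminf_s\lambda(s)>0$ and $\lambda^*>0$ anyway. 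If $\lambda^*>0$ and $R_e^*\equiv0$, then $N^*\equiv0$, so $\rho_i^*\equiv0$ wherever $\partial_x V^*+\lambda^*>0$ and $\partial_x^2V^*=\rho_i^*=0$ on the interior of the zero set, whence $\partial_x V^*$ is constant and $\int_0^L\partial_x V^*=0$ gives $V^*\equiv0$ — contradicting $\partial_x V^*(x^*)+\lambda^*=\lambda^*>0$.

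The remaining case, $\lambda^*>0$ and $R_e^*\not\equiv0$, is the heart of the matter. By the maximum principle and uniqueness for $\sF_2=0$ with \er{bc0} (as in the proof of Theorem~\ref{Global2}), $R_e^*>0$ on $(0,L]$; since $h(\mu)=0\iff\mu=0$, the relation $h(\partial_x V^*+\lambda^*)R_e^*\equiv0$ on $[0,x^*]$ forces $\partial_x V^*+\lambda^*\equiv0$ on a maximal interval $[0,z^*]$ with $z^*\ge x^*$, and one checks $\partial_x V^*+\lambda^*>0$ on $(z^*,L]$ (else $N^*$ would vanish past $z^*$) and $z^*<L$ (else $V^*(L)=-\lambda^*L\ne0$). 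Solving $\sF_2=0$ on $[0,z^*]$ with $\partial_x V^*\equiv-\lambda^*$ gives $R_e^*(x)=Bxe^{\lambda^* x/2}$ with $B>0$, and then $\sF_3=0$ gives $\rho_i^*(x)=\rho_e^*(x)=Bx$ on $[0,z^*]$. The task is to rule out this degenerate configuration, and here I would use not merely $h\ge0$ but the super-exponential smallness $h(\mu)=a\mu e^{-b/\mu}\le a\mu$ near $\mu=0$: writing $\psi:=\partial_x V^*+\lambda^*$, the production integral $N^*(x)=a\int_{z^*}^x\psi\,e^{-b/\psi}\rho_e^*\,dy$ vanishes to infinite order in $\psi(x)$ as $x\downarrow z^*$, while $\rho_i^*=(k_e/k_i)N^*/\psi$ must stay bounded by $M$ and match $\rho_i^*(z^*)=Bz^*$ inherited from the left; a Gronwall-type comparison for $\psi$ on $(z^*,L]$, using $e^{-b/\psi}\le1$ and the lower bound $\rho_e^*(x)\ge cx$ near $z^*$ (valid since $\partial_x R_e^*(0)>0$ by uniqueness), should then force $\psi\equiv0$ near $z^*$, contradicting $\psi>0$ there, so $R_e^*\equiv0$ — already shown impossible.

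\noindent\textbf{Main obstacle.} The delicate step is exactly this rigidity: excluding a nontrivial positive stationary solution on $\partial\sO$ whose total field $\partial_x V+\lambda$ vanishes on a whole subinterval. It genuinely requires the super-exponential decay of the Townsend coefficient near zero field together with the $C^0$-bound on the ion density to pin down the behaviour of $\rho_i$ at the degeneracy locus; making the Gronwall estimate precise, treating the endpoint sub-case $z^*=0$, and passing \er{R_i1} to the limit where the field degenerates (where $\rho_i(s_n)$ is only uniformly bounded, not equicontinuous) are the technical points that need the most care, while the compactness and energy-estimate steps parallel Lemma~\ref{lem(a)}.
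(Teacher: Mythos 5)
Your architecture is the same as the paper's: argue by contradiction, extract a limiting quadruple $(\lambda^*,\rho_i^*,R_e^*,V^*)$ solving the stationary system with $\inf_x(\partial_xV^*+\lambda^*)=0$, observe that at any zero of the limiting field the production integral must vanish (else the bounded $\rho_i(s_n)$ would blow up there), dispose of the totally degenerate case $\partial_xV^*+\lambda^*\equiv0$ by the explicit ODE for $R_e^*$, the boundary condition \er{bc0} and Lemma \ref{lem(a)}, and in the remaining case exploit the behaviour of $h$ near zero field. However, the step you yourself call the heart of the matter has a genuine gap as sketched. A ``Gronwall-type comparison \dots using $e^{-b/\psi}\le1$ and the lower bound $\rho_e^*(x)\ge cx$'' cannot close: once you replace $e^{-b/\psi}$ by $1$ you are left with an inequality of the form $\partial_x(\psi^2)(x)\le C\int_{z^*}^x\psi\,dy$ with $\psi(z^*)=0$, which is not of Osgood type and admits nontrivial nonnegative solutions (e.g.\ $\psi(x)=\epsilon(x-z^*)^2$ satisfies it near $z^*$); moreover a \emph{lower} bound on $\rho_e^*$ only enlarges the right-hand side and is of no help. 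The super-exponential factor must be retained, and it alone does the work: integrating $\sF_1=0$ from the last zero $y^*$ of $\psi:=\partial_xV^*+\lambda^*$ before a point where $\psi>0$ gives $\tfrac12\partial_x(\psi^2)\le C\int_{y^*}^y e^{-b/\psi}\,dz$, hence $\psi^2(x)\le C\int_{y^*}^x\int_{y^*}^y e^{-b/\psi}\,dz\,dy$; evaluating at the first point $x_n$ where $\psi=1/n$, so that $\psi\le1/n$ on the whole interval of integration, yields $n^{-2}\le CL^2e^{-bn}$, false for large $n$. This is the paper's \er{es1}--\er{es2} argument, and it renders unnecessary the elaborate structure your sketch builds (maximal zero interval $[0,z^*]$, the explicit profile $R_e^*=Bxe^{\lambda^*x/2}$, matching $\rho_i^*$ across $z^*$).

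A secondary but real issue: the contradiction hypothesis bounds $V$ and $R_e$ only in $C^2$ and $\rho_i$ only in $C^0$, so you cannot claim $C^2$-convergent subsequences of $V(s_n)$ and $R_e(s_n)$; indeed $\partial_x^2V=\rho_i-e^{-\lambda x/2}R_e$ and, as you note yourself at the end, $\rho_i(s_n)$ is not equicontinuous near the degeneracy. One must settle for $C^1$ convergence plus weak-star convergence of the second derivatives in $L^\infty$ and verify the limit equations in the weak/a.e.\ sense, as the paper does. Your key observation does survive this weaker convergence, since $N_n\to N^*$ uniformly already follows from $C^1\times C^0$ convergence of $(V(s_n),R_e(s_n))$, but the regularity claims as written are not justified.
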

\begin{proof}
On the contrary, suppose that $\sup_{s>0}\{\|\rho_i(s)\|_{C^0}+\|R_e(s)\|_{C^2}+\|V(s)\|_{C^2}+\lambda(s)\}$ is bounded.
We see from $\varliminf_{s \to \infty}\{\inf_{x \in I}\partial_x V(s,x)+\la(s)\}=0$ 
that there exist a sequence $\{s_n\}_{n \in \mathbb N}$
and a quadruple $(\la^*,\rho_i^*,R_e^*,V^*)$ with $\la^*<\infty $ such that
\begin{gather}
\left\{
\begin{array}{llllll}
 \lambda(s_n) & \to & \la^* & \text{in} & \mathbb R,
 \\
 \rho_{i}(s_n) & \rightharpoonup & \rho_i^* & \text{in} & L^\infty(0,L) & \text{weakly-star},
 \\
 R_{e}(s_n) & \to & R_{e}^* & \text{in} & C^1([0,L]),
 \\
 \partial_x^2 R_{e}(s_n) & \rightharpoonup & \partial_x^2R_{e}^* & \text{in} & L^\infty(0,L) & \text{weakly-star},
 \\
 V(s_n) & \to & V^* & \text{in} & C^1([0,L]),
 \\
 \partial_x^2 V(s_n) & \rightharpoonup & \partial_x^2V^* & \text{in} & L^\infty(0,L) & \text{weakly-star},
 \end{array}\right.
\lb{converge3}\\
R_{e}^*(0)=V^*(0)=V^*(L)=0,
\lb{boundary1} \\
\rho_i^* \geq 0, \quad R_e^* \geq 0,
\lb{p1}\\
\inf_{x\in[0,L]} (\partial_x V^*+\la^*)(x)=0.
\lb{singular1}
\end{gather}
			We shall  show that 
\begin{equation*}
\sF_j(\lambda^*,\rho_i^*,R_e^*,V^*)=0 \quad \text{for a.e. $x$ and  $j=1,2,3$.}
\end{equation*}
The equation $\sF_1(\lambda(s_n),\rho_{i}(s_n),R_{e}(s_n),V(s_n))=0$ 
with $\rho_{i}(s_n,0)=0$ is equivalent to
\begin{equation*}
(\partial_x V(s_n)+\lambda(s_n))\rho_{i}(s_n)=\frac{k_e}{k_i}
\int_0^x h(\partial_x V(s_n)+\lambda(s_n))e^{-\frac{\lambda(s)}{2}y}R_{e}(s_n)\,dy. 
\end{equation*}
		Multiplying  
by a test function $\varphi \in C^0([0,L])$ and integrating over $[0,L]$, we obtain
\begin{equation}\lb{rho0}
\int_0^L (\partial_x V(s_n)+\lambda(s_n))\rho_{i}(s_n)\varphi \,dx
=\int_0^L \frac{k_e}{k_i}\left(
\int_0^x h(\partial_x V(s_n)+\lambda(s_n))e^{-\frac{\lambda(s_n)}{2}y}R_{e}(s_n)\,dy 
\right)\varphi\,dx. 
\end{equation}  
		We note that 
\begin{align*}
&{}
\left|\int_0^L \left\{(\partial_x V(s_n)+\lambda(s_n))\rho_{i}(s_n)
-(\partial_x V^*+\lambda^*)\rho_{i}^*\right\}\varphi \,dx\right|
\\
&\leq 
\left|\int_0^L \left\{\partial_x V(s_n)+\lambda(s_n)
-\partial_x V^*-\lambda^* \right\} \rho_{i}(s_n) \varphi \,dx\right|
+\left|\int_0^L (\rho_i(s_n) - \rho_i^*){(\partial_x V^*+\lambda^*)\varphi} \,dx\right|.
\end{align*}
		So passing to the limit $n\to\infty$ in \er{rho0} and using \er{converge3}, we obtain 
\begin{equation*}
\int_0^L (\partial_x V^*+\lambda^*)\rho_{i}^*\varphi \,dx
=\int_0^L \frac{k_e}{k_i}\left(
\int_0^x h(\partial_x V^*+\lambda^*)e^{-\frac{\lambda^*}{2}y}R_{e}^*\,dy 
\right)\varphi\,dx
\quad \text{for any $\varphi \in C^0([0,L])$}.
\end{equation*}
This immediately gives 
\begin{equation}\lb{rho1}
(\partial_x V^*+\lambda^*)\rho_{i}^*
=\frac{k_e}{k_i}
\int_0^x h(\partial_x V^*+\lambda^*)e^{-\frac{\lambda^*}{2}y}R_{e}^*\,dy \quad a.e.,
\end{equation}
which is equivalent to $\sF_1(\lambda^*,\rho_i^*,R_e^*,V^*)=0$ a.e.

We can write  $\sF_2(\lambda(s_n),\rho_{i}(s_n),R_{e}(s_n),V(s_n))=0$ and
$R_{e}(s_n,0)=0$ weakly as 
\[
\int_0^L \partial_x R_{e}(s_n)\partial_x \varphi \,dx
+\frac{(\lambda(s_n))^2}{4}\int_0^L R_{e}(s_n)\varphi \,dx
=-\int_0^L G_{2n}\varphi \,dx \qu \text{for any $\varphi \in H^1_0(0,L)$},
\]
where 
\[
G_{2n}:= -\partial_x V(s_n) \partial_x R_{e}(s_n)
+\left\{\frac{\lambda(s_n)}{2}\partial_x V(s_n)
-\partial_x^2V(s_n)
-h\left(\partial_x V(s_n)+\lambda(s_n)\right)\right\}R_{e}(s_n).
\]
Noting that 
\begin{align*}
&{}
\left|\int_0^L \{\partial_x^2 V(s_n) R_{e}(s_n)- (\partial_x^2 V^*) R_{e}^*\}\varphi \,dx\right|
\\
&\leq 
\left|\int_0^L \partial_x^2 V(s_n)(R_{e}(s_n)-R_{e}^*) \varphi \,dx\right|
+\left|\int_0^L (\partial_x^2 V(s_n) - \partial_x^2 V^*) {R_{e}^*\varphi} \,dx\right|, 
\end{align*}
taking the limit $n\to\infty$ in the weak form, and using \er{converge3}, we have
\[
\int_0^L (\partial_x R_{e}^*)(\partial_x \varphi) \,dx
+\frac{\lambda^2}{4}\int_0^L R_{e}^*\varphi \,dx
=-\int_0^L G_{2}^*\varphi \,dx \qu \text{for any $\varphi \in H^1_0(0,L)$},
\]
where 
\[
G_{2}^*:= -(\partial_x V^*) \partial_x R_{e}^*
+\left\{\frac{\lambda}{2}\partial_x V^*
-\partial_x^2V^*
-h\left(\partial_x V^*+\lambda^*\right)\right\}R_{e}^* \in L^2(0,L).
\]
This and \er{converge3} mean that $R_e^* \in C^1([0,L])\cap W^{2,\infty}(0,L)$ 
satisfies $\sF_2=0$.
Similarly we can show $\sF_3(\lambda^*,\rho_i^*,R_e^*,V^*)=0$.

We now set
\[
 x_*:=\inf\{x \in [0,L];(\partial_x V^*+\lambda^*)(x)=0\}.
\]
We divide our proof into two cases $x_* = 0$ and $x_* >0$. 

\smallskip

We first consider the case $x_* >0$. 
The equation \er{rho1}, which holds for a sequence $x_\nu\to x_*$, yields the inequality  
\begin{equation*}
0=(\partial_x V^*+\lambda^*) \|\rho_i\|_{L^\infty(I)}  \ge \frac{k_e}{k_i}
\int_0^{x_*} h(\partial_x V^*+\lambda^*)e^{-\frac{\lambda^*}{2}y}R_{e}^*\,dy. 
\end{equation*}
Together with the nonnegativity \er{p1} this implies that 
$(h(\partial_x V^*+\lambda^*)e^{-\frac{\lambda^*}{2}\cdot}R_{e}^*)(x)=0$ 
for $x \in [0,x_*]$.
From the definition of $x_*$, 
we see that 
\begin{equation}\lb{regular1}
(\partial_x V^*+\lambda^*)(x)>0 \quad \text{for $x\in [0,x_*)$,}
\end{equation} 
so that $h(\partial_x V^*+\lambda^*)>0$ on $[0,x_*)$.
Therefore, $R_e^*(x)\equiv0$  in $[0,x_*)$.
Hence from \er{rho1} and \er{regular1},  
$\rho_i^*=0$ a.e.  in $[0,x_*)$.
Now from the equation $\sF_3(\lambda^*,\rho_i^*,R_e^*,V^*)=0$ 
we see that $\partial_x V^*$ is a constant in $(0,x_*)$.  Thus 
$\partial_x V^*+\lambda^*=0$ in $[0,x_*]$.
This contradicts the definition of $x_*$.

\smallskip

Now consider the other case  $x_* = 0$.  
We first suppose that there exists $y_0>0$ such that
$(\partial_x V^*+\lambda^*)(y_0)>0$. Let us set
\[
 y^*:=\sup\{x<y_0 ; (\partial_x V^*+\lambda^*)(x)=0 \}.
\]
Note that $y^* \in [0,y_0)$ and $(\partial_x V^*+\lambda^*)(y^*)=0$.
On the other hand, integrating $\sF_1(\lambda^*,\rho_i^*,R_e^*,V^*)=0\ a.e.$ 
over $[y^*,y]$ for any $y \in [y^*,y_0]$
and using $\sF_3(\lambda^*,\rho_i^*,R_e^*,V^*)=0$, we have
\begin{equation}\lb{es1}
(\partial_x V^*+\lambda^*)(\partial_x^2 V^*+e^{-\frac{\lambda^*}{2}y}R_{e}^*)(y)
\leq \int_{y^*}^y \frac{k_e}{k_i}h(\partial_x V^*+\lambda^*)e^{-\frac{\lambda^*}{2}z}R_{e}^*\,dz
\quad \text{for a.e. $y \in [y^*,y_0]$}. 
\end{equation}
By \er{p1} and \er{singular1}, the left hand side is estimated from below as
\begin{align*}
(\partial_x V^*+\lambda^*)(\partial_x^2 V^*+e^{-\frac{\lambda^*}{2}y}R_{e}^*)
\geq (\partial_x V^*+\lambda^*)\partial_x^2 V^*
=\frac{1}{2}\partial_x \left\{\left(\partial_x V^*+\lambda^*\right)^2\right\} \ a.e. 
\end{align*}
since $\partial_x V^*$ is absolutely continuous.  
The integrand on the right hand side of \eqref{es1} is estimated from above by 
$C e^{-b(\partial_x V^*+\lambda^*)^{-1}}$, due to the behavior of $h$; see \eqref{gh}.  
Consequently, substituting these expressions into \er{es1}, 
integrating the result over $[y^*,x]$, 
and using $(\partial_x V^*+\lambda^*)(y^*)=0$, we have
\begin{equation}\lb{es2}
\left(\partial_x V^*+\lambda^*\right)^2(x)
\leq C \int_{y^*}^x\int_{y^*}^y e^{-b(\partial_x V^*(z)+\lambda^*)^{-1}} \,dzdy
\quad \text{for $x \in [y^*,y_0]$}. 
\end{equation}
Now let us define $x_n$ by
\[
 x_n:=\inf\left\{x\leq y_0;\ \ \partial_x V^*(x)+\lambda^* = \frac{1}{n}\right\}.
\]
Notice that $y^* < x_n$ and $(\partial_x V^*+\lambda^*)(x)\leq 1/n$ for any $x\in [y^*,x_n]$,
since the continuous function $(\partial_x V^*+\lambda^*)$ vanishes at $x=y^*$.
Then we evaluate \er{es2} at $x=x_n$ to obtain
\[
\frac{1}{n^2}
\leq C \int_{y^*}^{x_n}\int_{y^*}^y e^{-b(\partial_x V^*(z)+\lambda^*)^{-1}} \,dzdy
\leq C e^{-{bn}}.
\]
For suitably large $n$, this clearly does not hold.
So once again we have  a contradiction.

The remaining case is that $x_*=0$ and $\partial_x V^*+\lambda^*\equiv 0$ .
In this case,  $\partial_x^2 V^*\equiv 0$ and so 
the equation  $\sF_2(\lambda^*,\rho_i^*,R_e^*,V^*)  =0$ yields
$\partial_x^2(e^{-\la^* x/2}R_e^*)  
=  e^{-\la^* x/2}(\partial_x^2R_e^* - \la\partial_x R_e^* + \frac{\la^2}{4} R_e^*) = 0$.
This means that $e^{-\la^* x/2}R_e^*(x)=cx+d$ for some constants $c$ and $d$.
Furthermore, $d=0$ also follows from \er{boundary1}. 
On the other hand, \er{bc0} holds for any $s_n>0$ and 
then using \er{converge3} and $(\partial_x V^*+\lambda^*)\equiv 0$, we have
\begin{align*}
\partial_xR_e^*(L)=-\left(\partial_x V^*(L)+\frac{\la^*}{2}\right)R_e^*(L)
+\gamma e^{\frac{\la^*}{2}L}
\int_0^L h\left(\partial_x V^*(x)+\lambda^*\right)e^{-\frac{\lambda^*}{2}x}R_e^*(x) \,dx
=\frac{\la^*}{2}R_e^*(L).
\end{align*}
Substituting $R_e^*(x)=cxe^{\la^* x/2}$, we find $c=0$.  
Consequently, $R_e^* \equiv 0$.  
Then  
we obtain $\rho_i^* \equiv 0$ from $\sF_3(\lambda^*,\rho_i^*,R_e^*,V^*)=0$ .
Solving $\sF_3(\lambda^*,\rho_i^*,R_e^*,V^*)=0$ with \er{boundary1} 
and $\rho_i^* \equiv R_e^* \equiv 0$,
we also have $V^* \equiv 0$.
Consequently $\la^*=0$ holds and  $\varliminf_{s\to0}\la(s)=0$.
This contradicts Lemma \ref{lem(a)}, since
$\sup_{s>0}\|V(s)\|_{C^2}$ is bounded.
\end{proof}

Next, we reduce Condition (c) in Theorem \ref{Global1} to a simpler condition.
We write the result directly in terms of the ion density $\rho_i$ and
the electron density $\rho_e=R_e e^{-\la x/2}$.

\begin{lem}\lb{lem(d)}
Assume the global positivity alternative (i) in Theorem \ref{Global2}. \  
If $\sup_{s>0}\{\|\rho_i(s)\|_{C^0}+\|\rho_e(s)\|_{C^0}+\lambda(s)\}$ 
is bounded, 
then $\sup_{s>0}\{\|\rho_i(s)\|_{C^1}+\|R_e(s)\|_{C^2}+\|V(s)\|_{C^3}\}$ 
is bounded.
\end{lem}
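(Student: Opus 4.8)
The plan is a staged bootstrap through the four equations $\sF_1=\sF_2=\sF_3=\sF_4=0$: the hypothesis gives uniform $L^\infty$ control of $\rho_i$, of $\rho_e=R_ee^{-\lambda x/2}$, and of $\lambda$, and each equation is then used to upgrade the regularity of one unknown. First set $M_0:=\sup_{s>0}\{\|\rho_i(s)\|_{C^0}+\|\rho_e(s)\|_{C^0}+\lambda(s)\}<\infty$. Since $\lambda\le M_0$, we get $\|R_e(s)\|_{C^0}\le e^{M_0L/2}\|\rho_e(s)\|_{C^0}$, uniformly bounded. From $\sF_3=0$ we have $\partial_x^2V=\rho_i-\rho_e$, so $\|\partial_x^2V(s)\|_{C^0}\le M_0$; together with $V(0)=V(L)=0$ (via Rolle's theorem and integration) this gives $\|V(s)\|_{C^2}\le CM_0$. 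In particular $\partial_xV$ and $\partial_x^2V$ are uniformly $C^0$-bounded, and $\partial_xV+\lambda$ is uniformly bounded above, so $h(\partial_xV+\lambda)$ is uniformly $C^0$-bounded ($h$ being increasing on $(0,\infty)$). I would then rewrite $\sF_2=0$ as the first-order identity $\partial_x(e^{V}\partial_xR_e)=e^{V}qR_e$, with $q:=\tfrac\lambda2\partial_xV-\partial_x^2V+\tfrac{\lambda^2}4-h(\partial_xV+\lambda)$ now uniformly $C^0$-bounded; moreover $\sF_4=0$ expresses $\partial_xR_e(L)$ through $\partial_xV(L),\lambda,R_e(L),\rho_i(L)$, all bounded. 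Integrating this identity from $L$ back to $x$ yields $\|\partial_xR_e(s)\|_{C^0}$ bounded, and then $\partial_x^2R_e=-(\partial_xV)\partial_xR_e+qR_e$ gives $\|R_e(s)\|_{C^2}$ bounded. Note that this first stage never uses a lower bound on $\partial_xV+\lambda$.

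The second stage does need such a lower bound, and this is where Lemma~\ref{lem(b)} enters. The first stage shows $\sup_{s>0}\{\|\rho_i\|_{C^0}+\|R_e\|_{C^2}+\|V\|_{C^2}+\lambda\}<\infty$, so the contrapositive of Lemma~\ref{lem(b)} forces $\varliminf_{s\to\infty}\{\inf_{x\in I}(\partial_xV(s,x)+\lambda(s))\}>0$. Since $\sK$ is a continuous curve into $\mathbb R\times X$ (Theorem~\ref{Global1}) with $V\in C^3\hookrightarrow C^1$, the map $(s,x)\mapsto\partial_xV(s,x)+\lambda(s)$ is continuous on $[0,\infty)\times[0,L]$, and it is strictly positive because every point of $\sK$ lies in $\sO$; a short sequential-compactness argument then produces $\delta>0$ with $\partial_xV(s,x)+\lambda(s)\ge\delta$ for all $s>0$ and $x\in[0,L]$. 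With this in hand I would finish as follows. From $\sF_1=0$, the function $w:=(\partial_xV+\lambda)\rho_i$ satisfies $w'=\tfrac{k_e}{k_i}h(\partial_xV+\lambda)e^{-\lambda x/2}R_e\in C^0$ (uniformly bounded) and $w(0)=0$, so $\|w(s)\|_{C^1}$ is bounded; then $\rho_i=w/(\partial_xV+\lambda)$ is the quotient of a $C^1$ function by a $C^1$ function bounded below by $\delta$, hence $\|\rho_i(s)\|_{C^1}$ is bounded. Finally $\partial_x^2V=\rho_i-e^{-\lambda x/2}R_e$ with $\rho_i\in C^1$ and $R_e\in C^1$ both uniformly bounded gives $\|V(s)\|_{C^3}$ bounded, which completes the proof.

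The hard part is really just the factor $(\partial_xV+\lambda)^{-1}$ that appears when one differentiates the $\rho_i$-equation: the lemma's hypotheses do not, by themselves, prevent $\partial_xV+\lambda$ from degenerating to $0$ somewhere along $\sK$. The way around it is precisely the two-stage organization above --- run the part of the bootstrap that is insensitive to this degeneracy, and then use the resulting uniform $C^2$ bounds together with Lemma~\ref{lem(b)} to rule the degeneracy out before performing the remaining differentiations. Everything else is routine one-dimensional elliptic estimation, of the same flavor as the bounds already established in the proofs of Lemmas~\ref{index1} and~\ref{cpt1}.
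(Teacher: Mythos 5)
Your proof is correct and follows essentially the same route as the paper's: bound $\|V\|_{C^2}$ from $\sF_3=0$, then $\|R_e\|_{C^2}$ from $\sF_2=0$, invoke the contrapositive of Lemma~\ref{lem(b)} to obtain a uniform lower bound on $\partial_xV+\lambda$, recover $\|\rho_i\|_{C^1}$ from the integrated form of $\sF_1=0$ (your $w=(\partial_xV+\lambda)\rho_i$ is exactly the representation \eqref{R_i1}), and finish with $\sF_3=0$ for $\|V\|_{C^3}$. You merely supply the details the paper leaves implicit (the integration of $\partial_x(e^{V}\partial_xR_e)$ using the boundary condition $\sF_4=0$, and the compactness argument upgrading the $\varliminf$ to a uniform lower bound), and these are all sound.
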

\begin{proof}
It is clear from $\sF_3=0$
together with the definition  $\rho_e=R_ee^{-\la x/2}$, 
that \begin{equation*}
\sup_{s>0}\|V(s)\|_{C^2}
\leq C \sup_{s>0}\{\|\rho_i(s)\|_{C^0}+\|\rho_e(s)\|_{C^0}\}
<+\infty. 
\end{equation*}
From this, the equation $\sF_2=0$, and $\sup_{s>0}\la(s)<+\infty$, 
we also deduce that $\sup_{s>0}\|R_e(s)\|_{C^2}<+\infty$.
Now  Lemma \ref{lem(b)} implies that  
$\varliminf_{s\to0}\{\inf_{x}(\partial_x V+\lambda)(s,x)\}\}\neq 0$.
Together with  \er{R_i1}, this result  leads to $\sup_{s>0}\|\rho_i(s)\|_{C^1}<+\infty$.
Finally the bound $\sup_{s>0}\|\partial_x^3 V(s)\|_{C^0}$ $<+\infty$
follows from $\sF_3(\lambda(s),\rho_i(s),R_e(s),V(s))=0$.
\end{proof}

We conclude with the following {\it main result}. 

\begin{thm} \label{mainthm}
Assume that the sparking voltage exists (that is, $D$ vanishes somewhere),
and the transversality condition \er{transversality2} holds. 
Then one of the following two alternatives occurs:
\begin{enumerate}[(A)]
\item Both $\rho_i(s,x)$ and $\rho_e(s,x)=(R_ee^{-\la \cdot/2})(s,x)$ 
are positive  for any $s\in(0,\infty)$ and $x \in I$.   Furthermore, 
$\varlimsup_{s\to\infty}\{\|\rho_i(s)\|_{C^0}+\|\rho_e(s)\|_{C^0}+\lambda(s)\}=\infty$.
\item
there exists a finite s-value $s^\ddagger>0$ and a voltage $V_c^\ddagger>V_c^\dagger$ such that 
\begin{enumerate}[(1)]  
\item $D(V^\ddagger)=0,\, g(V_c^\ddagger) \le \pi^2/L^2$ ; 
\item $\rho_i(s,x)>0$ and $\rho_e(s,x)>0$ \ for all $s\in(0,s^\ddagger)$ and $x \in (0,L]$;
\item $(\la(s^\ddagger),\rho_i(s^\ddagger),R_e(s^\ddagger),V(s^\ddagger))=(V_{c}^\ddagger/L,0,0,0)$;
\item  $\rho_i(s,x)<0$ and $\rho_e(s,x)<0$ \ for $0<s-s^\ddagger\ll1$ and $x \in (0,L]$.
\end{enumerate}
\end{enumerate}
\end{thm}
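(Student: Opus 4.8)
The plan is to obtain Theorem~\ref{mainthm} by assembling the results already established in Sections~3--5, essentially without new computation. First I would note that the two standing hypotheses---existence of the sparking voltage and the transversality condition \eqref{transversality2}---are precisely what is needed to run the global bifurcation theorem for our operator $\sF$ on the spaces $X,Y$ and the open set $\sO$: hypothesis~(H1) of Theorem~\ref{Global0} is trivial; (H2) is the content of Lemmas~\ref{null1} and~\ref{range1} (the nullspace and the range defect of the linearization at a trivial solution are one-dimensional precisely when $D(V_c)=0$, and $V_c^\dagger$ is a root of $D$ by \eqref{bp1}, with positivity of the generator by Lemma~\ref{null1}(c,d)) together with the assumed transversality; (H3) is Lemma~\ref{index1}; and (H4) is Lemma~\ref{cpt1} with the exhaustion $\sO=\bigcup_j\sO_j$. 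Hence Theorem~\ref{Global1} applies and yields the global real-analytic curve $\sK=(\la(s),\rho_i(s),R_e(s),V(s))$ with properties (C1)--(C5); in particular (C5) provides the four alternatives (a)--(d).

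Next I would invoke Theorem~\ref{Global2}, whose only hypothesis is that the sparking voltage exists, to split into its two cases. In case~(ii) the curve is a half-loop: conclusions (1)--(3) of Theorem~\ref{Global2}(ii), the facts $D(V_c^\ddagger)=0$ and $g(V_c^\ddagger)<\pi^2/L^2$ recorded there through \eqref{positive1}, the strict inequality $V_c^\ddagger>V_c^\dagger$, and the elementary remark that $\rho_e=R_ee^{-\la x/2}$ has the same sign as $R_e$ (the exponential being positive), together yield verbatim alternative~(B) of the present theorem (with $g(V_c^\ddagger)\le\pi^2/L^2$ a fortiori). So the only remaining task is to show that case~(i) of Theorem~\ref{Global2} forces alternative~(A).

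In case~(i) the positivity of $\rho_i$ and $\rho_e$ on $(0,\infty)\times I$ is immediate, and it remains to prove that $\varlimsup_{s\to\infty}\{\|\rho_i(s)\|_{C^0}+\|\rho_e(s)\|_{C^0}+\lambda(s)\}=\infty$. I would argue by contradiction. First, alternative~(d) (periodicity) is incompatible with~(i): if $\sK$ had period $T$, then choosing $s_0<0$ with $|s_0|$ small so that $\rho_i(s_0,\cdot)<0$ by (C2), the identity at the positive parameter $s_0+nT$ for large $n$ would exhibit negative densities, contradicting~(i). Thus one of (a), (b), (c) holds. Now suppose the $\varlimsup$ above were finite. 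Since $\sK$ is a continuous curve into $\mathbb R\times X$, it maps bounded $s$-intervals to compact subsets of $\mathbb R\times X$, so all the relevant norms are bounded on bounded $s$-intervals; hence a finite $\varlimsup$ at infinity upgrades to $\sup_{s>0}\{\|\rho_i(s)\|_{C^0}+\|\rho_e(s)\|_{C^0}+\lambda(s)\}<\infty$. Then Lemma~\ref{lem(d)} upgrades this to a uniform bound on $\|\rho_i(s)\|_{C^1}+\|R_e(s)\|_{C^2}+\|V(s)\|_{C^3}$, which rules out (c); the relation $\partial_x^2V=\rho_i-\rho_e$ together with $V(0)=V(L)=0$ bounds $\|V(s)\|_{C^2}$ uniformly, so Lemma~\ref{lem(a)} rules out (a); and with $\|\rho_i\|_{C^0}$, $\|R_e\|_{C^2}$, $\|V\|_{C^2}$, $\lambda$ all now bounded, Lemma~\ref{lem(b)} rules out (b). This contradicts (C5), so the $\varlimsup$ statement holds and alternative~(A) follows.

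The one non-routine step is this last reduction, where the three auxiliary Lemmas~\ref{lem(a)}--\ref{lem(d)} must be combined so that the single crude bound on $\|\rho_i\|_{C^0}+\|\rho_e\|_{C^0}+\lambda$ simultaneously defeats the small-$\lambda$ alternative~(a), the degenerate-field alternative~(b), and the large-norm alternative~(c). The delicate bookkeeping point inside it is the passage from a finite $\varlimsup$ at infinity to a genuinely uniform bound over all $s>0$, which rests on the fact that the curve, being continuous into $\mathbb R\times X$, cannot blow up at a finite parameter value. Everything else is a direct application of the cited theorems and lemmas.
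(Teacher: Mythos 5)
Your proposal is correct and follows essentially the same route as the paper: it reduces to the dichotomy of Theorem \ref{Global2}, excludes the loop alternative (d) by the sign change forced by (C2), and combines Lemmas \ref{lem(a)}, \ref{lem(b)}, and \ref{lem(d)} with alternative (C5) of Theorem \ref{Global1}; you merely run the argument in contrapositive form (assuming the $\varlimsup$ finite and deriving a contradiction) where the paper argues directly that (a) or (b) forces (c) and hence unboundedness of $\|\rho_i\|_{C^0}+\|\rho_e\|_{C^0}+\lambda$. Your explicit remark that a finite $\varlimsup$ at infinity upgrades to a uniform bound over all $s>0$ by continuity of the curve on compact parameter intervals is a small bookkeeping point the paper leaves implicit, but it does not alter the substance.
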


\begin{proof}  Suppose that (B), 
which  is the same as the second alternative $(ii)$ in Theorem \ref{Global2},  does {\it not} hold.  
We will prove (A).  
Then the first alternative $(i)$ in Theorem \ref{Global2} must hold.  
Now in Theorem \ref{Global1} there are four alternatives.  Alternative (d) cannot happen because 
$\rho_i$ and $R_e$ are negative on part of the loop. 
Lemmas \ref{lem(a)} and \ref{lem(b)} assert that either (a) or (b) implies that 
$\sup_{s>0}\{\|\rho_i(s)\|_{C^0}+\|R_e(s)\|_{C^2}+\|V(s)\|_{C^2}+\lambda(s)\}$ is unbounded.
Then Lemma \ref{lem(d)} implies that $\sup_{s>0}\{\|\rho_i(s)\|_{C^0}+\|\rho_e(s)\|_{C^0}+\lambda(s)\}$ 
must also be unbounded.  This means that (A) holds. 
\end{proof} 
This concludes the proof of Theorem \ref{mainthm0}.  
{\  We remark that (B) never occurs 
unless a voltage $V_c^\ddagger > V_c^\dagger$ exists satisfying \er{positive1}. }


\section{Bounded Densities}

It is of interest to know how the global bifurcation curve behaves 
for the case that the densities are bounded but $\lambda$ is unbounded.
We see from \er{sp0} that $(\rho_i,\rho_e,V)$ solves
\begin{subequations}\lb{r5}
\begin{gather}
\partial_x\left\{\left(\partial_x V+\lambda\right)\rho_i\right\}
=\frac{k_e}{k_i}h\left(\partial_x V+\lambda\right)\rho_e,
\lb{re6} \\
-\partial_x\{\left(\partial_x V+\lambda\right)\rho_e +\partial_x\rho_e\}
=h\left(\partial_x V+\lambda\right)\rho_e,
\lb{re7} \\
\partial_x^2 V=\rho_i-\rho_e,
\lb{re8} \\ 
\left(\partial_x V(L)+\lambda\right)\rho_e(L) +\partial_x\rho_e(L)=
\gamma\frac{k_i}{k_e}\left(\partial_x V(L)+\lambda\right)\rho_i(L)
\lb{re9}
\end{gather}
with boundary conditions
\begin{equation}\lb{rb5}
\rho_i(0)=\rho_e(0)=V(0)=V(L)=0.
\end{equation}
\end{subequations}

\begin{lem}\lb{lem(c)}

Assume $\gamma(1+\gamma)^{-1}\neq e^{-aL}$ and that there is a sparking voltage 
\footnote 
{Lemma \ref{A2} ensures that we can have a sparking voltage $V_c^\dagger$ under the 
inequality $\gamma(1+\gamma)^{-1}> e^{-aL}$.}.
Also assume alternative (A) in Theorem \ref{mainthm}.
Furthermore, suppose that there exists a sequence $\{s_n\}_{n \in \mathbb N}$ such that
\begin{equation}\lb{cond5}
\lim_{n\to\infty}s_n=\infty, \quad
\sup_{n \geq 1}(\|\rho_i(s_n)\|_{C^0}+\|\rho_e(s_n)\|_{C^0})<+\infty, \quad
\lim_{n\to\infty}\la(s_n)=\infty.
\end{equation}
Then 
$\lim_{{n}\to\infty} (\|\rho_i(s_{n})\|_{C^0}+\|\rho_e(s_{n})\|_{L^1})=0$. 
\end{lem}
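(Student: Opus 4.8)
The plan is to rescale the electron equation by $\la_n:=\la(s_n)$ and pass to the limit $n\to\infty$, where the equation collapses to a linear first-order ODE for the electron mass, and then to read off the vanishing of the limit from the hypothesis $\gamma(1+\gamma)^{-1}\neq e^{-aL}$. First I would record the a priori bounds: from the Poisson equation \er{re8} and \er{cond5}, $\|\partial_x^2V(s_n)\|_{C^0}\le\|\rho_i(s_n)\|_{C^0}+\|\rho_e(s_n)\|_{C^0}\le C$, and since $V(s_n)(0)=V(s_n)(L)=0$, Rolle's theorem gives $\|\partial_xV(s_n)\|_{C^0}\le LC=:K$. Setting $\sigma_n:=\partial_xV(s_n)+\la_n$, this forces $\sigma_n\ge\la_n-K\to\infty$ uniformly in $x$, so $h(\sigma_n)/\la_n=a(\sigma_n/\la_n)e^{-b/\sigma_n}\to a$ uniformly; write $h(\sigma_n)=a\la_n(1+\ve_n)$ with $\|\ve_n\|_{C^0}\to 0$. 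Next I would exploit the flux structure: with $j_{en}:=\sigma_n\rho_e(s_n)+\partial_x\rho_e(s_n)$ and $j_{in}:=\sigma_n\rho_i(s_n)$, equations \er{re6}--\er{re7} give $\partial_xj_{in}=\tfrac{k_e}{k_i}h(\sigma_n)\rho_e(s_n)$ and $\partial_xj_{en}=-h(\sigma_n)\rho_e(s_n)=-\tfrac{k_i}{k_e}\partial_xj_{in}$, so $j_{en}+\tfrac{k_i}{k_e}j_{in}$ is constant; evaluating at $x=0$ (where $\rho_i(s_n)$ and $\rho_e(s_n)$ vanish) and using \er{re9} in the form $j_{en}(L)=\gamma\tfrac{k_i}{k_e}j_{in}(L)$, that constant equals $(1+\gamma)\tfrac{k_i}{k_e}j_{in}(L)$. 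Hence \er{re7} becomes the closed scalar equation
\[
\partial_x\rho_e(s_n)+\sigma_n\rho_e(s_n)=\tfrac{k_i}{k_e}\bigl[(1+\gamma)j_{in}(L)-j_{in}(x)\bigr],\qquad \rho_e(s_n,0)=0 .
\]

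Then I would introduce $m_n(x):=\int_0^x\rho_e(s_n,y)\,dy$, which by alternative (A) of Theorem \ref{mainthm} and \er{cond5} is nondecreasing, uniformly Lipschitz, and satisfies $0\le m_n\le L\|\rho_e(s_n)\|_{C^0}\le C$. Since $j_{in}(x)=\tfrac{k_e}{k_i}\int_0^x h(\sigma_n)\rho_e(s_n)\,dy=\tfrac{k_e}{k_i}a\la_n\,(m_n(x)+r_n(x))$ with $|r_n|\le\|\ve_n\|_{C^0}m_n\to 0$ uniformly, the scalar equation reads $\partial_x\rho_e(s_n)+\sigma_n\rho_e(s_n)=a\la_n\bigl[(1+\gamma)m_n(L)-m_n(x)+t_n(x)\bigr]$ with $\|t_n\|_{C^0}\to 0$. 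Integrating over $[0,x]$, using $\rho_e(s_n,0)=0$, $\|\rho_e(s_n)\|_{C^0}\le C$, and $\int_0^x\sigma_n\rho_e(s_n)=\la_n m_n(x)+\int_0^x(\partial_xV(s_n))\rho_e(s_n)$ with the last integral bounded by $Km_n(x)$, and then dividing by $\la_n$, I obtain
\[
m_n(x)=a(1+\gamma)\,m_n(L)\,x-a\int_0^x m_n(y)\,dy+\eta_n(x),\qquad \|\eta_n\|_{C^0}\longrightarrow 0 .
\]

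Finally I would pass to the limit: by Arzel\`a--Ascoli a subsequence has $m_n\to m_\infty$ uniformly on $I$ with $m_n(L)\to\mu\ge 0$, and the displayed identity gives $m_\infty(x)=a(1+\gamma)\mu x-a\int_0^x m_\infty$, so $m_\infty\in C^1$, $m_\infty'=a(1+\gamma)\mu-am_\infty$, $m_\infty(0)=0$, whence $m_\infty(x)=(1+\gamma)\mu(1-e^{-ax})$. Evaluating at $x=L$ yields $\mu\bigl[(1+\gamma)(1-e^{-aL})-1\bigr]=\mu\bigl[\gamma-(1+\gamma)e^{-aL}\bigr]=0$, and the hypothesis $\gamma(1+\gamma)^{-1}\neq e^{-aL}$ forces $\mu=0$, hence $m_\infty\equiv 0$. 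Since every subsequence of $\{m_n\}$ has a further subsequence converging uniformly to $0$, the whole sequence does, so $\|\rho_e(s_n)\|_{L^1}=m_n(L)\to 0$; then $0\le\rho_i(s_n,x)=j_{in}(x)/\sigma_n(x)\le\tfrac{k_e}{k_i}a\tfrac{\la_n}{\la_n-K}(m_n(L)+r_n(L))\to 0$, giving $\|\rho_i(s_n)\|_{C^0}\to 0$ and the claim. The main obstacle I anticipate is bookkeeping rather than conceptual: $j_{in}(L)$ itself grows like $\la_n$ (it is not bounded), so one must divide by $\la_n$ at exactly the right stage and verify that the remainders $r_n,t_n,\eta_n$ are genuinely uniformly small — which is precisely where the boundedness of $\|\rho_e(s_n)\|_{C^0}$ (hence of $\|\rho_e(s_n)\|_{L^1}$) and the positivity supplied by alternative (A) enter.
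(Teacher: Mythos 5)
Your argument is correct, and it reaches the paper's key limiting identity by a somewhat different, more elementary route. The paper first shows $\sup_n\|\rho_i(s_n)\|_{C^1}<\infty$, extracts a weak-star convergent subsequence $\rho_e(s_n)\rightharpoonup\rho_e^*$ in $L^\infty$, and passes to the limit in \er{re7} (integrated over $[x,L]$, combined with \er{re9} and divided by $\lambda$) against test functions, arriving at $-a\gamma\int_0^L\rho_e^*+\rho_e^*(x)=a\int_x^L\rho_e^*$ a.e.; solving this Volterra-type equation and invoking $\gamma(1+\gamma)^{-1}\neq e^{-aL}$ gives $\int_0^L\rho_e^*=0$, and nonnegativity plus weak-star convergence against the constant $1$ yields $\|\rho_e(s_n)\|_{L^1}\to0$. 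You instead use the exact conservation law $j_{en}+\tfrac{k_i}{k_e}j_{in}\equiv(1+\gamma)\tfrac{k_i}{k_e}j_{in}(L)$ at finite $n$ to close the electron equation, integrate once more to obtain a relation for the mass function $m_n(x)=\int_0^x\rho_e(s_n)$, divide by $\lambda_n$, and apply Arzel\`a--Ascoli to the uniformly Lipschitz $m_n$; your limit ODE $m_\infty'=a(1+\gamma)m_\infty(L)-am_\infty$ is exactly the antiderivative of the paper's identity, and evaluating at $x=L$ forces $m_\infty(L)=0$ under the same hypothesis. Your route replaces weak-star compactness and the attendant product-convergence checks by uniform convergence of equicontinuous primitives, which is cleaner; it loses nothing, since only $L^1$-smallness of $\rho_e$ and the resulting $C^0$-smallness of $\rho_i$ (via the analogue of \er{Kny}) are needed. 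One cosmetic slip: the constant in the conservation law is pinned down by evaluating at $x=L$ via \er{re9}, as you in fact do; the evaluation at $x=0$ that you mention yields $\partial_x\rho_e(s_n,0)$, which is neither known a priori nor needed.
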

\begin{proof}
First, it is clear from \er{re8} and \er{rb5} that
\begin{equation}\lb{bound_V}
\sup_{n\geq1}\|V(s_n)\|_{C^2}
\leq C \sup_{n\geq1}\{\|\rho_i(s_n)\|_{C^0}+\|\rho_e(s_n)\|_{C^0}\}
<+\infty. 
\end{equation}
Solve \er{re6} for $\partial_x\rho_i$ and write $h$ explicitly from \er{gh}  to obtain
\[
\partial_x\rho_i
=\frac{k_e}{k_i} a \exp\left(\frac{-b}{|\partial_x V+\lambda|}\right) 
\frac{|\partial_x V+\lambda|}{\partial_x V+\lambda}\rho_e
- \frac{\partial_x^2 V}{\partial_x V+\lambda}\rho_i.
\]
From this, \er{cond5}, and \er{bound_V}, we see that
$\sup_{n\geq1}\|\rho_i(s_n)\|_{C^1}<+\infty$
and thus there exist a subsequence [still denoted by $s_n$]
and $(\rho_i^*,\rho_e^*,V^*)$ such that
\begin{gather}
\left\{
\begin{array}{llllll}
 \lambda(s_n) & \to & \infty & \text{in} & \mathbb R,
 \\
 \rho_{i}(s_n) & \to & \rho_i^* & \text{in} & C^0([0,L]), & 
 \\
 \rho_{e}(s_n) & \rightharpoonup & \rho_{e}^* & \text{in} & L^\infty(0,L) & \text{weakly-star},
 \\
 V(s_n) & \to & V^* & \text{in} & C^1([0,L]),
 \end{array}\right.
\lb{converge5}\\
\rho_{i}^*(0)=V^*(0)=V^*(L)=0,
\lb{boundary5} \\
\rho_i^* \geq 0, \quad \rho_e^* \geq 0.
\lb{p5}
\end{gather}

For the completion of the proof, we claim that it suffices to prove the identity 
\begin{equation}\lb{claim*}
-a\gamma \int_0^L \rho_e^*(y)\,dy + \rho_e^*(x)= a \int_x^L \rho_e^*(y)\,dy \quad a.e. 
\end{equation}
In order to prove this claim, first note that \er{claim*} implies that $\rho_e^*$ is a continuous function. 
Now multiplying the identity by $e^{ax}$, we have
\begin{equation*}
\partial_x\left(e^{ax} \int_x^L \rho_e^*(y)\,dy\right)=-a\gamma \int_0^L \rho_e^*(y)\,dy \,e^{ax}
\quad a.e. 
\end{equation*}
Then integration  over $[0,L]$ leads to
\begin{equation*}
\int_0^L \rho_e^*(y)\,dy \, \{1-\gamma(e^{aL}-1)\}=0,
\end{equation*}
which together with the assumption $\gamma(1+\gamma)^{-1}\neq e^{-aL}$ 
means that $\|\rho_e^*\|_{L^1}=0$.  
We also see from \er{converge5} and $\rho_e(s_n)\geq 0$ that
\begin{equation}\lb{converge6}
\|\rho_e(s_n)\|_{L^1}=\int_0^L 1\cdot \rho_e(s_n,x)\,dx \to 
\int_0^L 1\cdot \rho_e^*(x)\,dx=0 \quad \text{as $n\to\infty$}. 
\end{equation}
It follows that $\|\rho_e(s_n)\|_{L^1} \to 0$  {\it for the whole original sequence}.
Furthermore, solving \er{re6} with \er{boundary5}, we have
\begin{align}   \label{Kny} 
\rho_i(s_n,x)
=a\frac{k_e}{k_i}\int_0^x   K_n(y)   \rho_e(s_n,y) \,dy
\leq C \|\rho_e(s_n)\|_{L^1},
\end{align}				where 
\[ 
K_n(y)   :=  \exp\left(\frac{-b}{|\partial_x V(s_n,y)+\lambda(s_n)|}\right)
\frac{|\partial_x V(s_n,y)+\lambda(s_n)|}{\partial_x V(s_n,x)+\lambda(s_n)}.
\]
Here we have used \er{bound_V} in derving the last inequality.
Together with \er{converge6} this completes the proof of the lemma.

It remains to prove \er{claim*}.
Integrating \er{re7} over $[x,L]$, using \er{re9}, 
and multiplying the result by $\lambda^{-1}$, we obtain
\begin{align*}
&-\gamma\frac{k_i}{k_e}\left(\frac{\partial_x V(L)}{\lambda}+1\right)\rho_i(L)
+\left(\frac{\partial_x V(x)}{\lambda}+1\right)\rho_e(x) 
+\frac{1}{\lambda}\partial_x\rho_e(x)
\\
&=a\int^L_x \exp\left(\frac{-b}{|\partial_x V(y)+\lambda|}\right)
\left|\frac{\partial_x V(y)}{\lambda}+1\right| \rho_e(y)\,dy.
\end{align*}
We take this identity at $s=s_n$ and look at the behavior of each term as $s_n\to\infty$.  
We multiply it by a test function $\phi \in C_c^\infty((0,L))$, 
integrate it over $(0,L)$, and let $n \to \infty$.
Then we notice from \er{cond5}--\er{converge5} that 
\begin{align*}
-\gamma\frac{k_i}{k_e}\left(\frac{\partial_x V(s_n,L)}{\lambda(s_n)}+1\right)\rho_i(s_n,L)\int_0^L   \phi(x) \,dx
&\to -\gamma\frac{k_i}{k_e}\rho_i^*(L)\int_0^L  \phi(x) \,dx,
\\
\int_0^L \left(\frac{\partial_x V(s_n,x)}{\lambda(s_n)}+1\right)\rho_e(s_n,x)  \phi(x) \,dx
&\to \int_0^L \rho_e^*(x) \phi(x) \,dx,
\\
\int_0^L \frac{1}{\lambda}\partial_x\rho_e(s_n,x) \phi(x) \,dx
=-\int_0^L \frac{1}{\lambda}\rho_e(s_n,x) \partial_x\phi(x) \,dx
&\to 0.
\end{align*}
Furthermore, there holds that
\begin{align*}
\int_0^L \left[a\int^L_x \exp\left(\frac{-b}{|\partial_x V(s_n,y)+\lambda(s_n)|}\right)
\left|\frac{\partial_x V(s_n,y)}{\lambda(s_n)}+1\right| \rho_e(s_n,y)\,dy \right] \phi(x)\,dx
=I_{1,n}+I_{2,n},
\end{align*}
where 
\begin{align*}
I_{1,n}&:=\int_0^L \left[a\int^L_x \left\{\exp\left(\frac{-b}{|\partial_x V(s_n,y)+\lambda(s_n)|}\right)
\left|\frac{\partial_x V(s_n,y)}{\lambda(s_n)}+1\right|-1\right\} \rho_e(s_n,y)\,dy \right] \phi(x)\,dx,
\\
I_{2,n}&:=\int_0^L \left[ a \int^L_x \rho_e(s_n,y)\,dy \right]\phi(x)\,dx.
\end{align*}
Then it is also seen from \er{cond5}--\er{converge5} that
\begin{align*}
|I_{1,n}| &\leq       \|\rho_e\|_{L^1}\|\phi\|_{L^1} 
 \sup_{y \in [0,L]}  \left|\exp\left(\frac{-b}{|\partial_x V(s_n,y)+\lambda(s_n)|}\right)
\left|\frac{\partial_x V(s_n,y)}{\lambda(s_n)}+1\right|-1\right| \to 0,
\\
I_{2,n} & = a \int_0^L \rho_e(s_n,y) \left[\int_0^y \phi(x) \,dx \right]\,dy
\\
&\to a \int_0^L \rho_e^*(y) \left[\int_0^y \phi(x) \,dx \right]\,dy
=\int_0^L \left[ a \int^L_x \rho_e^*(y)\,dy \right]\phi(x)\,dx.
\end{align*}
Therefore, we conclude that
\begin{equation}\lb{rhoe*}
-\gamma\frac{k_i}{k_e}\rho_i^*(L) + \rho_e^*(x)=  a \int_x^L \rho_e^*(y)\,dy \quad a.e. 
\end{equation}

Comparing with \er{claim*}, it  is left  to show that
\begin{equation}\lb{rhoi*}
\rho_i^*(L)=a\frac{k_e}{k_i}\int_0^L \rho_e^*(y)\,dy. 
\end{equation}
Indeed, plugging \er{rhoi*} into \er{rhoe*} leads to \er{claim*}.
Evaluating \er{Kny} at $x=L$, we have 
\begin{align*}
\rho_i(s_n,L)
&=a\frac{k_e}{k_i}\int_0^L   K_n(y)   \rho_e(s_n,y) \,dy .  
\end{align*}
Now $K_n\to 1$ uniformly and $\rho_e(s_n) \rightharpoonup \rho_e^*$ in $L^\infty$ weakly-star.  
Therefore, letting $n\to\infty$, 
we get \er{rhoi*} in the limit.  
\end{proof}
}


\begin{appendix}
\section{Roots of the Sparking Function $D$}\lb{SA}
In this appendix we investigate the roots of $D(V_c)$.
The first lemma means that in the case of Figure \ref{fig3}, 
which we discussed in our first paper \cite{SS1},
$D$ always has at least one root.

\begin{figure}[htbp]  \begin{center}
 \begin{minipage}{0.5\hsize}   
{\unitlength 0.1in
\begin{picture}( 29.1500, 19.9000)( -0.1100,-22.0000)
\put(1.8900,-15.2100){\makebox(0,0)[rt]{$O$}}%
\put(28.8300,-16.8600){\makebox(0,0){$V_c$}}%
%
{{%
\special{pn 13}%
\special{pa 288 2200}%
\special{pa 288 210}%
\special{fp}%
\special{sh 1}%
\special{pa 288 210}%
\special{pa 268 278}%
\special{pa 288 264}%
\special{pa 308 278}%
\special{pa 288 210}%
\special{fp}%
}}%
%
{{%
\special{pn 13}%
\special{pa 62 1438}%
\special{pa 2904 1438}%
\special{fp}%
\special{sh 1}%
\special{pa 2904 1438}%
\special{pa 2836 1418}%
\special{pa 2850 1438}%
\special{pa 2836 1458}%
\special{pa 2904 1438}%
\special{fp}%
}}%
{{%
\special{pn 13}%
\special{pn 13}%
\special{pa 290 2201}%
\special{pa 290 2187}%
\special{ip}%
\special{pa 290 2153}%
\special{pa 290 2140}%
\special{ip}%
\special{pa 290 2106}%
\special{pa 290 2092}%
\special{ip}%
\special{pa 290 2058}%
\special{pa 290 2044}%
\special{ip}%
\special{pa 290 2010}%
\special{pa 290 1997}%
\special{ip}%
\special{pa 290 1963}%
\special{pa 290 1949}%
\special{ip}%
\special{pa 290 1915}%
\special{pa 290 1901}%
\special{ip}%
\special{pa 290 1868}%
\special{pa 290 1854}%
\special{ip}%
\special{pa 290 1820}%
\special{pa 290 1806}%
\special{ip}%
\special{pa 290 1772}%
\special{pa 290 1759}%
\special{ip}%
\special{pa 290 1725}%
\special{pa 290 1711}%
\special{ip}%
\special{pa 290 1677}%
\special{pa 290 1663}%
\special{ip}%
\special{pa 290 1630}%
\special{pa 290 1616}%
\special{ip}%
\special{pa 290 1582}%
\special{pa 290 1568}%
\special{ip}%
\special{pa 290 1534}%
\special{pa 290 1520}%
\special{ip}%
\special{pa 290 1487}%
\special{pa 290 1473}%
\special{ip}%
\special{ip}%
\special{pa 290 1439}%
\special{pa 306 1439}%
\special{pa 310 1440}%
\special{pa 320 1441}%
\special{pa 326 1442}%
\special{pa 336 1444}%
\special{pa 340 1445}%
\special{pa 350 1448}%
\special{pa 390 1464}%
\special{pa 406 1472}%
\special{pa 420 1480}%
\special{pa 426 1483}%
\special{pa 466 1506}%
\special{pa 470 1509}%
\special{pa 490 1519}%
\special{pa 520 1531}%
\special{pa 526 1533}%
\special{pa 530 1534}%
\special{pa 550 1538}%
\special{pa 556 1539}%
\special{pa 586 1539}%
\special{pa 596 1538}%
\special{pa 600 1537}%
\special{pa 606 1536}%
\special{pa 610 1535}%
\special{pa 616 1534}%
\special{pa 620 1532}%
\special{pa 626 1530}%
\special{pa 646 1522}%
\special{pa 656 1517}%
\special{pa 660 1514}%
\special{pa 676 1505}%
\special{pa 680 1502}%
\special{pa 686 1498}%
\special{pa 690 1495}%
\special{pa 696 1491}%
\special{pa 716 1475}%
\special{pa 720 1471}%
\special{pa 726 1466}%
\special{pa 730 1462}%
\special{pa 736 1457}%
\special{pa 766 1427}%
\special{pa 770 1422}%
\special{pa 776 1416}%
\special{pa 780 1411}%
\special{pa 786 1405}%
\special{pa 836 1345}%
\special{pa 840 1339}%
\special{pa 846 1332}%
\special{pa 850 1326}%
\special{pa 856 1319}%
\special{pa 860 1313}%
\special{pa 866 1306}%
\special{pa 876 1293}%
\special{pa 880 1286}%
\special{pa 896 1266}%
\special{pa 900 1259}%
\special{pa 960 1175}%
\special{pa 966 1168}%
\special{pa 1016 1097}%
\special{pa 1020 1090}%
\special{pa 1060 1034}%
\special{pa 1066 1027}%
\special{pa 1096 986}%
\special{pa 1100 979}%
\special{pa 1110 966}%
\special{pa 1116 959}%
\special{pa 1126 946}%
\special{pa 1130 939}%
\special{pa 1136 933}%
\special{pa 1146 920}%
\special{pa 1150 914}%
\special{pa 1156 907}%
\special{pa 1160 901}%
\special{pa 1220 829}%
\special{pa 1226 823}%
\special{pa 1236 812}%
\special{pa 1240 806}%
\special{pa 1246 801}%
\special{pa 1256 790}%
\special{pa 1260 785}%
\special{pa 1306 740}%
\special{pa 1310 735}%
\special{pa 1320 726}%
\special{pa 1326 721}%
\special{pa 1330 717}%
\special{pa 1346 704}%
\special{pa 1350 700}%
\special{pa 1370 684}%
\special{pa 1376 680}%
\special{pa 1386 673}%
\special{pa 1390 669}%
\special{pa 1396 666}%
\special{pa 1400 662}%
\special{pa 1406 659}%
\special{pa 1446 635}%
\special{pa 1450 632}%
\special{pa 1470 622}%
\special{pa 1476 620}%
\special{pa 1506 608}%
\special{pa 1510 606}%
\special{pa 1516 605}%
\special{pa 1520 603}%
\special{pa 1526 602}%
\special{pa 1536 599}%
\special{pa 1540 598}%
\special{pa 1560 594}%
\special{pa 1566 593}%
\special{pa 1586 591}%
\special{pa 1626 591}%
\special{pa 1630 592}%
\special{pa 1636 592}%
\special{pa 1640 593}%
\special{pa 1650 594}%
\special{pa 1656 595}%
\special{pa 1666 597}%
\special{pa 1670 598}%
\special{pa 1680 601}%
\special{pa 1686 602}%
\special{pa 1690 604}%
\special{pa 1696 605}%
\special{pa 1700 607}%
\special{pa 1726 617}%
\special{pa 1756 632}%
\special{pa 1760 635}%
\special{pa 1766 638}%
\special{pa 1770 641}%
\special{pa 1786 651}%
\special{pa 1790 654}%
\special{pa 1796 658}%
\special{pa 1836 690}%
\special{pa 1840 694}%
\special{pa 1846 699}%
\special{pa 1886 739}%
\special{pa 1890 744}%
\special{pa 1896 750}%
\special{pa 1930 792}%
\special{pa 1936 798}%
\special{pa 1940 805}%
\special{pa 1950 818}%
\special{pa 1956 825}%
\special{pa 1970 846}%
\special{pa 1976 853}%
\special{pa 1996 883}%
\special{pa 2000 891}%
\special{pa 2020 923}%
\special{pa 2026 931}%
\special{pa 2030 940}%
\special{pa 2036 948}%
\special{pa 2040 957}%
\special{pa 2076 1020}%
\special{pa 2080 1029}%
\special{pa 2086 1039}%
\special{pa 2120 1109}%
\special{pa 2126 1119}%
\special{pa 2130 1130}%
\special{pa 2170 1218}%
\special{pa 2180 1241}%
\special{pa 2186 1253}%
\special{pa 2206 1301}%
\special{pa 2210 1313}%
\special{pa 2216 1326}%
\special{pa 2220 1338}%
\special{pa 2226 1351}%
\special{pa 2260 1442}%
\special{pa 2270 1469}%
\special{pa 2276 1483}%
\special{pa 2296 1539}%
\special{pa 2300 1553}%
\special{pa 2310 1582}%
\special{pa 2350 1702}%
\special{pa 2356 1717}%
\special{pa 2360 1733}%
\special{pa 2386 1813}%
\special{pa 2390 1829}%
\special{pa 2400 1862}%
\special{pa 2436 1981}%
\special{pa 2440 1998}%
\special{pa 2446 2016}%
\special{pa 2450 2033}%
\special{pa 2456 2051}%
\special{pa 2470 2105}%
\special{pa 2476 2123}%
\special{pa 2496 2197}%
\special{pa 2496 2201}%
\special{fp}%
}}%
\put(23.0000,-7.5000){\makebox(0,0){$g(V_c)$}}%
%
{{%
\special{pn 8}%
\special{pa 1350 700}%
\special{pa 1350 1438}%
\special{dt 0.045}%
\special{pa 1350 1438}%
\special{pa 1338 1438}%
\special{dt 0.045}%
}}%
\put(13.5000,-15.7000){\makebox(0,0){$V_c^*$}}%
\put(0.3000,-7.9000){\makebox(0,0)[lb]{$\displaystyle \frac{\pi^2}{L^2}$}}%
%
{{%
\special{pn 8}%
\special{pa 1850 690}%
\special{pa 1850 1428}%
\special{dt 0.045}%
\special{pa 1850 1428}%
\special{pa 1838 1428}%
\special{dt 0.045}%
}}%
\put(18.5000,-15.6000){\makebox(0,0){$V_c^\#$}}%
%
{{%
\special{pn 8}%
\special{pa 1850 700}%
\special{pa 280 700}%
\special{dt 0.045}%
\special{pa 280 700}%
\special{pa 290 710}%
\special{dt 0.045}%
}}%
\put(8.5000,-15.7000){\makebox(0,0){$\Lambda^*$}}%
\put(21.8000,-15.7000){\makebox(0,0){$\Lambda^{\#}$}}%
\end{picture}}

\begin{lem}
\lb{A1}
 (i)  If $\max_{V_c>0}g(V_c)>{\pi^2}/L^2$, then 
$D(V_c)$ has at least one root $V_c$ that satisfies $g(V_c^\dagger)<\pi^2/L^2$  in the interval $(0,V_c^*)$.
(ii)  In addition, if 
\begin{equation}\lb{CondA1}
a> 4^{-1}eb+ e\pi^2b^{-1}. 
\end{equation}
then 
$\max_{V_c>0}g(V_c)>{\pi^2}/L^2$. 
\end{lem}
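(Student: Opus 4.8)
The plan is to deduce (i) from the intermediate value theorem applied to $D$ on a subinterval of $(0,\infty)$, and to deduce (ii) by evaluating $g$ at a single convenient point.

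For (i) I would first use the hypothesis $\max_{V_c>0}g(V_c)>\pi^2/L^2$, together with $g(0^+)=0$ and the continuity of $g$, to define $V_c^*:=\inf\{V_c>0 : g(V_c)=\pi^2/L^2\}$, which is finite, strictly positive, and is the first crossing labelled in Figure \ref{fig3}. By minimality of $V_c^*$ and continuity, $g(V_c)<\pi^2/L^2$ for every $V_c\in(0,V_c^*)$, since a crossing strictly inside would contradict the definition of $V_c^*$ (recall $g<\pi^2/L^2$ near $0$). Next I would recall from the proof of Lemma \ref{null1}(d) that $\lim_{V_c\to0^+}D(V_c)=1/(1+\gamma)>0$, and evaluate $D$ at $V_c^*$: since $g(V_c^*)=\pi^2/L^2>0$ we have $\sqrt{g(V_c^*)}\,L=\pi$, so the $g>0$ form of \eqref{spfu} used there gives
\[
D(V_c^*)=\cos\pi+\tfrac{V_c^*}{2\pi}\sin\pi-\tfrac{\gamma}{1+\gamma}e^{V_c^*/2}=-1-\tfrac{\gamma}{1+\gamma}e^{V_c^*/2}<0.
\]
Because $D\in C((0,\infty);\mathbb R)$, the intermediate value theorem produces a zero of $D$ in the open interval $(0,V_c^*)$. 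As $g<\pi^2/L^2$ throughout that interval, any such zero --- and hence in particular the smallest zero $V_c^\dagger$, which therefore also lies in $(0,V_c^*)$ --- satisfies $g(\cdot)<\pi^2/L^2$; this is the assertion, and it incidentally re-proves the conclusion of Lemma \ref{null1}(d) in this case.

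For (ii) it suffices to exhibit a single value of $V_c$ at which $g$ already exceeds $\pi^2/L^2$. I would take $V_c=bL$, that is $\lambda=b$, for which the exponential factor in $h$ equals $e^{-1}$, so that $g(bL)=h(b)-b^2/4=ab/e-b^2/4$. A one-line rearrangement then shows that the hypothesis \eqref{CondA1} is precisely the inequality $g(bL)>\pi^2/L^2$, whence $\max_{V_c>0}g(V_c)\ge g(bL)>\pi^2/L^2$, as required.

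All of these computations are elementary, so I do not expect a serious obstacle. The one point deserving care is in (i): the intermediate value argument must be carried across the sign change that $g$ may undergo inside $(0,V_c^*)$, and this is exactly why one needs the continuity of $D$ on all of $(0,\infty)$ --- recorded just after \eqref{spfu} --- rather than only on the set where $g\neq0$.
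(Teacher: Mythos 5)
Your proposal is correct and follows essentially the same route as the paper: define $V_c^*$ as the first (ascending) crossing of $g$ with $\pi^2/L^2$, compute $D(V_c^*)=-1-\tfrac{\gamma}{1+\gamma}e^{V_c^*/2}<0$ while $\lim_{V_c\to0^+}D=\tfrac1{1+\gamma}>0$, and apply the intermediate value theorem (your explicit remark about continuity of $D$ across the zero set of $g$ is a welcome clarification); part (ii) is likewise the paper's evaluation of $g$ at $V_c=bL$. The only caveat is one you share with the paper: the rearrangement of $g(bL)>\pi^2/L^2$ gives $a>4^{-1}eb+e\pi^2 b^{-1}L^{-2}$, which coincides with \eqref{CondA1} only under the normalization $L=1$.
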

\begin{proof}  
(i)  Since $\max g>0$, the function $g$ has exactly two positive roots.  
Define $V_c^*$ by 
\begin{equation}\label{V^*}
g(V_c^*)=\frac{\pi^2}{L^2}, \quad g'(V_c^*)>0  
\end{equation}
as in Figure \ref{fig3}.
We have 
\[
D(V_c^*)=-1-\frac{\gamma}{1+\gamma}e^{V_c^*/2}<0. 
\]
In addition, $\lim_{V_c\to 0}D(V_c)=\frac{1}{1+\gamma}>0$.  
So we see that $D(V_c)$ has at least one root $V_c$ that satisfies \er{positive1} 
on the interval $(0,V_c^*)$.
For (ii)  we simply note that \eqref{A1} implies that $g(b)>\pi^2/L^2$.  
\end{proof}

We also can find a sufficient condition 
for the existence of roots of $D$ that is caused by the $\gamma$-mechanism.
In this case it does not matter whether or not $\max_{V_c>0}g(V_c)>{\pi^2}/L^2$ holds.

\begin{lem}
\lb{A2}
Suppose that
\begin{equation}\lb{CondA2}
\gamma(1+\gamma)^{-1}>e^{-aL}.
\end{equation}
Then $D(V_c)$ has at least one root.
\end{lem}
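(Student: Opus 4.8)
The plan is to deduce the existence of a root of $D$ from the intermediate value theorem, by comparing the sign of $D$ near $V_c=0$ with its sign for large $V_c$, just as in the proof of Lemma~\ref{A1}. Near the origin we already know, from the computation in the proof of Lemma~\ref{null1}, that
\[
\lim_{V_c\to 0^+} D(V_c) = \frac{1}{1+\gamma} > 0 ,
\]
because $h(\la)\to 0$ exponentially forces $g(V_c)\to 0^-$, hence $\mu=L\sqrt{-g(V_c)}\to 0$, so the first two terms in the definition \er{spfu} tend to $1$ and $0$ respectively while $\frac{\gamma}{1+\gamma}e^{V_c/2}\to\frac{\gamma}{1+\gamma}$. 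Since $D\in C((0,\infty);\real)$, it therefore suffices to exhibit some large $V_c$ at which $D(V_c)<0$; in fact I will show that $D(V_c)\to-\infty$.

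To analyze $D$ at infinity, note first that $g(V_c)=\frac{aV_c}{L}e^{-bL/V_c}-\frac{V_c^2}{4L^2}\to-\infty$ as $V_c\to\infty$, so for $V_c$ large we have $g(V_c)<0$ and $\mu=\sqrt{\tfrac{V_c^2}{4}-aL\,V_c\,e^{-bL/V_c}}$ is real and positive. Writing $\mu=\tfrac{V_c}{2}-\delta$ and squaring gives $\delta-\tfrac{\delta^2}{V_c}=aL\,e^{-bL/V_c}$; since $\sqrt{1-x}\ge 1-x$ for $x\in[0,1]$ yields $0\le\delta\le 2aL$ once $4aL\le V_c$, the term $\delta^2/V_c\to 0$ and hence $\delta\to aL$. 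Consequently $e^{\mu}=e^{-aL}e^{V_c/2}(1+o(1))$, $\ e^{-\mu}e^{-V_c/2}\to 0$, and $\frac{V_c}{4\mu}\to\tfrac12$. Dividing \er{spfu} by $e^{V_c/2}$ and passing to the limit,
\[
e^{-V_c/2}D(V_c)\ \longrightarrow\ \tfrac12 e^{-aL}+\tfrac12 e^{-aL}-\frac{\gamma}{1+\gamma}=e^{-aL}-\frac{\gamma}{1+\gamma},
\]
which is strictly negative by the hypothesis \er{CondA2}. Thus $D(V_c)<0$ for all sufficiently large $V_c$, and together with $\lim_{V_c\to 0^+}D(V_c)>0$ and continuity, the intermediate value theorem yields a root of $D$ in $(0,\infty)$, completing the proof. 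Moreover, since \er{CondA2} is an open condition, the sparking voltage exists in the robust sense used in Theorem~\ref{mainthm0}.

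The only nonroutine point is the asymptotic expansion $\mu=\tfrac{V_c}{2}-aL+o(1)$ and the ensuing evaluation of $\lim_{V_c\to\infty}e^{-V_c/2}D(V_c)$; one must check that each error term genuinely vanishes after multiplication by $e^{-V_c/2}$, but since $\mu\ge\tfrac{V_c}{2}-2aL$ this is immediate and no estimate of $h$ or $g$ beyond their leading-order behavior is needed. Everything else is elementary.
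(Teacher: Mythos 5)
Your proof is correct and follows essentially the same route as the paper: establish $\lim_{V_c\to 0^+}D(V_c)=\tfrac{1}{1+\gamma}>0$, derive the asymptotic $\mu=\tfrac{V_c}{2}-aL+o(1)$ to get $\lim_{V_c\to\infty}e^{-V_c/2}D(V_c)=e^{-aL}-\tfrac{\gamma}{1+\gamma}<0$, and conclude by the intermediate value theorem. You merely supply more detail on the expansion of $\mu$ than the paper does.
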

\begin{proof}
First $\lim_{V_c\to 0}D(V_c)=\frac{1}{1+\gamma}>0$ holds.
We also see that  
$\mu = L\sqrt{g(-V_c)} = V_c/2 - aL + O(V_c^{-1})$ as $V_c\to\infty$.  Thus 
\[
\lim_{V_c\to \infty} \frac{D(V_c)}{e^{V_c/2}}
=e^{-aL}-\frac{\gamma}{1+\gamma} <0,
\]
which means $\lim_{V_c\to \infty}D(V_c)=-\infty$.
Hence $D$ has a positive root.
\end{proof}

We remark the roots in Lemmas \ref{A1} and \ref{A2} are sparking voltages.
Indeed for a fixed triple $(a,b,\gamma)$ in the open set
$\{(a,b,\gamma)\in(\real_+)^3 \ ; \ \text{either \eqref{CondA1} or \eqref{CondA2} holds} \}$,
the sparking function $D$ has a positive root for any triple in a neighborhood of it.

In the next lemma,
we find a candidate of the {\it anti-sparking voltage} $V_c^{\ddagger}$.
Therefore alternative (B) in Theorem \ref{mainthm} is an actual possibility.

\begin{lem}\lb{A5}
Let \er{CondA1} hold. 
There exists a positive constant $\gamma_0$ such that if $\gamma <\gamma_0$,
then $D(V_c)$ has at least two roots $V_c^{\dagger}$ and $V_c^{\ddagger}$ with \er{positive1}.
\end{lem}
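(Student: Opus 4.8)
The plan is to exploit the fact that the only $\gamma$-dependent term in the sparking function \eqref{spfu} is $\frac{\gamma}{1+\gamma}e^{V_c/2}$, which tends to $0$ (uniformly on bounded $V_c$-intervals) as $\gamma\to0$; for small $\gamma$ the function $D$ then inherits a sign pattern that forces two sign changes in the region where $g<\pi^2/L^2$. The root $V_c^\dagger$ requires no new argument: condition \eqref{CondA1} gives $\max_{V_c>0}g(V_c)>\pi^2/L^2$ by Lemma \ref{A1}(ii), and then Lemma \ref{A1}(i) already produces a root of $D$ in $(0,V_c^*)$ that satisfies \eqref{positive1} for \emph{every} $\gamma>0$, where $V_c^*$ is defined by \eqref{V^*}. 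Hence the smallest root $V_c^\dagger$ lies in $(0,V_c^*)$ and satisfies \eqref{positive1}. The real content is to construct a \emph{second} root $V_c^\ddagger>V_c^\dagger$ with $g(V_c^\ddagger)<\pi^2/L^2$, provided $\gamma$ is small.

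First I would pin down the geometry of $g$. Since $\max g>\pi^2/L^2$ strictly, and $g$ has at most one local maximum in $(0,\infty)$ with $g<0$ near $0$ and $g\to-\infty$ at $+\infty$, the equation $g=\pi^2/L^2$ has exactly two positive solutions $V_c^*<V_c^\#$, with $g'(V_c^*)>0$ and $g'(V_c^\#)<0$; moreover $g$ is strictly decreasing on $(V_c^\#,\infty)$, so if $W$ denotes the larger positive zero of $g$ (which exists since $\max g>0$, cf.\ the proof of Lemma \ref{A1}(i)) then $V_c^\#<W$ and $0\le g(V_c)<\pi^2/L^2$ for all $V_c\in(V_c^\#,W]$. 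Next I would evaluate $D$ at the two endpoints of $[V_c^\#,W]$. At $V_c^\#$ we have $g=\pi^2/L^2$, so $\sqrt{g}\,L=\pi$, and exactly as in the proof of Lemma \ref{null1}(d),
\[
D(V_c^\#)=\cos\pi+\frac{V_c^\#}{2\pi}\sin\pi-\frac{\gamma}{1+\gamma}e^{V_c^\#/2}
=-1-\frac{\gamma}{1+\gamma}e^{V_c^\#/2}<0
\]
for every $\gamma>0$. At $W$ we have $g=0$, so $\mu=0$, and letting $\mu\to0$ in \eqref{spfu} gives
\[
D(W)=1+\frac{W}{2}-\frac{\gamma}{1+\gamma}e^{W/2}.
\]
Since $1+\tfrac{W}{2}<e^{W/2}$ and $\tfrac{\gamma}{1+\gamma}<\gamma$, choosing $\gamma_0=(1+\tfrac{W}{2})e^{-W/2}>0$ guarantees $D(W)>0$ whenever $\gamma<\gamma_0$; note that $\gamma_0$ depends on $(a,b)$ only through $W$, which is all the lemma asserts.

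Finally, for $\gamma<\gamma_0$ the continuous function $D$ satisfies $D(V_c^\#)<0<D(W)$, so the intermediate value theorem yields a root $V_c^\ddagger\in(V_c^\#,W)$ of $D$. On that open interval $0<g<\pi^2/L^2$, so $V_c^\ddagger$ satisfies \eqref{positive1}; and since $V_c^\dagger<V_c^*<V_c^\#<V_c^\ddagger$, we obtain $V_c^\ddagger>V_c^\dagger$, which is exactly the claim (with $V_c^\dagger$ also satisfying \eqref{positive1} by the first paragraph). I expect the only delicate point to be the bookkeeping of the admissible region: one must invoke the ``at most one local maximum'' property of $g$ to ensure that $g$ is monotone on $(V_c^\#,\infty)$, so that the entire interval $(V_c^\#,W)$ produced by the intermediate value theorem lies where $g<\pi^2/L^2$; the sign computations and the limit $\gamma\to0$ are otherwise routine.
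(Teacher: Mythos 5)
Your proposal is correct and follows essentially the same route as the paper: obtain $V_c^\dagger\in(0,V_c^*)$ from Lemma \ref{A1}, then apply the intermediate value theorem on $[V_c^\#,\La^\#]$ using $D(V_c^\#)=-1-\frac{\gamma}{1+\gamma}e^{V_c^\#/2}<0$ and $D(\La^\#)=1+\frac{\La^\#}{2}-\frac{\gamma}{1+\gamma}e^{\La^\#/2}>0$ for small $\gamma$. Your only addition is the explicit threshold $\gamma_0=(1+\tfrac{W}{2})e^{-W/2}$, which is a harmless sharpening of the paper's ``suitably small $\gamma$.''
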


\begin{proof}
We know from Lemma \ref{A1} and its proof that
a root $V_c$ with \er{positive1} exists in the open interval $(0,V_c^*)$.
Let us seek another root $V_c^{\ddagger}$.
The graph of $g$ is sketched in Figure \ref{fig3} and thus $V_c^{\#}$
is the unique value such that  
$g(V_c^\#)=\frac{\pi^2}{L^2}$ and $g'(V_c^\#)<0$.
The function $g$ has three roots $0$, $\La^*$, and $\La^{\#}$ such that  
$0 < \La^* < V_c^* < V_c^{\#} < \La^{\#}$.
We emphasize that $g$ is independent of $\gamma$. 
Now consider the function $D$ on the interval $[V_c^\#,\La^\#]$.  
Evaluating $D(V_c)$ at the point $V_c=V_c^{\#}$, 
we have 
\[
D(V_c^{\#})=-1-\frac{\gamma}{1+\gamma}e^{V_c^{\#}/2}<0. 
\]
On the other hand, evaluating $D(V_c)$ at the point $V_c=\La^{\#}$, we have
\[
D(\La^{\#})=1+\frac{\La^{\#}}{2}-\frac{\gamma}{1+\gamma}e^{\La^{\#}/2}>0,
\]
where the last inequality is valid for suitably small $\gamma$.
Thus there must be a root in between;   
that is,  $D$ has a root with \er{positive1} in the open interval $(V_c^{\#},\La^{\#})$.
\end{proof}

The next lemma states a sufficient condition for the absence of any root of $D$.

\begin{lem}
\lb{A3}
If $a< 4^{-1}eb$ and $\gamma(1+\gamma)^{-1}\leq e^{-2aL}$ hold, 
then $D(V_c)$ has no root.
\end{lem}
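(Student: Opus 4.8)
The plan is to prove the stronger statement that $D(V_c)>0$ for every $V_c>0$, which certainly implies that $D$ has no root. First I would extract the consequence of the first hypothesis $a<\tfrac{e}{4}b$: as already observed in the proof of Lemma~\ref{trans1}, the equation $g(V_c,a,b)=0$ then has no positive solution, and since $g(V_c)\to 0^{-}$ as $V_c\to 0^{+}$ while $g$ is continuous on $(0,\infty)$, it follows that $g(V_c)<0$ for all $V_c>0$. Hence $\mu=L\sqrt{-g(V_c)}$ is a genuine positive real number for every $V_c>0$, and, directly from the definition \eqref{spfu}, the sparking function may be written as
\begin{equation*}
D(V_c)=\cosh\mu+\frac{V_c}{2\mu}\sinh\mu-\frac{\gamma}{1+\gamma}\,e^{V_c/2}.
\end{equation*}

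The core of the argument is an elementary two-sided estimate for $\mu$. Starting from $\mu^{2}=-L^{2}g(V_c)=\tfrac{V_c^{2}}{4}-aLV_c\,e^{-bL/V_c}$ together with $0<e^{-bL/V_c}<1$, I would first note that $0<\mu<\tfrac{V_c}{2}$, and then observe
\begin{equation*}
\Big(\tfrac{V_c}{2}-\mu\Big)\Big(\tfrac{V_c}{2}+\mu\Big)=aLV_c\,e^{-bL/V_c}<aLV_c,
\qquad\text{whence}\qquad
\tfrac{V_c}{2}-\mu<\frac{aLV_c}{\tfrac{V_c}{2}+\mu}<2aL ,
\end{equation*}
the last inequality using $\mu>0$; thus $\mu>\tfrac{V_c}{2}-2aL$, and this is strict because $e^{-bL/V_c}<1$ strictly for finite $V_c$. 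Since $\tfrac{V_c}{2\mu}>1$ we obtain $\cosh\mu+\tfrac{V_c}{2\mu}\sinh\mu>\cosh\mu+\sinh\mu=e^{\mu}>e^{V_c/2-2aL}$, whereas the second hypothesis gives $\tfrac{\gamma}{1+\gamma}e^{V_c/2}\le e^{-2aL}e^{V_c/2}=e^{V_c/2-2aL}$. Subtracting these two bounds yields $D(V_c)>0$.

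I do not expect a serious obstacle here, since the whole proof is a short chain of elementary estimates. The one point deserving care is strictness: we want $D(V_c)\ne 0$, not merely $D(V_c)\ge 0$, and this is secured because for every finite $V_c>0$ one has $e^{-bL/V_c}<1$ and $\mu>0$, which makes the key inequality $\mu>\tfrac{V_c}{2}-2aL$ strict. A secondary bookkeeping point is to confirm that the $\cosh/\sinh$ representation of $D$ above is the one valid throughout $(0,\infty)$ — which is immediate once $g<0$ everywhere — rather than the trigonometric form that occurs where $g>0$.
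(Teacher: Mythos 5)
Your proof is correct and is essentially the paper's own argument: both first deduce $g<0$ on all of $(0,\infty)$ from $a<\tfrac{e}{4}b$, then bound $D(V_c)>e^{\mu}-\tfrac{\gamma}{1+\gamma}e^{V_c/2}$ using $\tfrac{V_c}{2\mu}>1$ and $\cosh\mu+\sinh\mu=e^{\mu}$, and finally show $\mu>\tfrac{V_c}{2}-2aL$ by the same factorization $\tfrac{V_c}{2}-\mu=h\cdot(\tfrac{V_c}{2}+\mu)^{-1}L<2aL$. The only cosmetic difference is that you derive $g<0$ from the no-root claim in Lemma~\ref{trans1} plus the sign near the origin, whereas the paper verifies it directly by maximizing $-\log\lambda-b/\lambda+\log(4a)$.
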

\begin{proof}
We first claim that $g(\lambda L)$ is negative for any $\lambda=V_c/L \in (0,\infty)$ 
if and only if $a< 4^{-1}eb$.
Indeed, $g(\lambda L) < 0$ holds 
if and only if $a e^{-b/\lambda}< \frac{\lambda}{4}$ holds.
By taking logarithms, we see that $g(\lambda L) < 0$ is equivalent to  
$G(\lambda):=-\log\lambda -\frac{b}{\lambda}+\log a+\log 4 < 0$.
It is straightforward to check that $G$ attains a maximum at $\lambda=b$.
Furthermore, the maximum is less than zero if and only if $a< 4^{-1}eb$.
This proves the claim.

For any $\la=V_c/L >0$, the negativity of $g(\la L)$  implies that 
\begin{align}
{D(\lambda L)} &=
\cosh(L\sqrt{-g(\la L)}) + \frac\la{2\sqrt{-g(\la L)}} \sinh(L\sqrt{-g(\la L)})  -\frac\ga{1+\ga} e^{\frac\la2L}
\notag\\ 
&> e^{L\sqrt{-g(\la L)}}   -\frac\ga{1+\ga} e^{\frac\la2L}  
\label{Aeq2}
\end{align}
because 
\[
\frac{\lambda}{2\sqrt{-g(\lambda L)}} 
>  \frac{\lambda}{2\sqrt{\lambda^2/4}}=1   \]
and $\cosh z + \sinh z = e^z$.  
Now we note that 
\[  
\sqrt{-g(\lambda L)}-\tfrac{\lambda}{2}  
=-h(\lambda) \left(\sqrt{-g(\lambda L)}+\tfrac{\lambda}{2} \right)^{-1}  
> -h(\lambda) (\tfrac{\lambda}{2})^{-1}  
>  -2a.  \]
So the right side of \er{Aeq2} is greater than 
\[  
e^{\frac\la2 L }  \{ e^{-2aL} - \tfrac\ga{1+\ga}  \}  \ge 0  \]
by hypothesis.  
\end{proof}

Independently, it can be shown numerically that $D(V_c)$ has a unique root or many roots
for suitable choices of $L$, $a$, $b$, and $\gamma$. 
To illustrate this, the graphs of $(1+\gamma)e^{-V_c/2}D(V_c)$ are sketched in Figures \ref{fig4} and \ref{fig5}
for the case $L=1$, $a=3$, $b=4$, $\gamma=5$ and for the case 
$L=1$, $a=70$, $b=0.1$, $\gamma=0.1$, respectively.
\begin{figure}[htbp]
 \begin{minipage}{0.47\hsize}
   \includegraphics[width=0.99\textwidth]{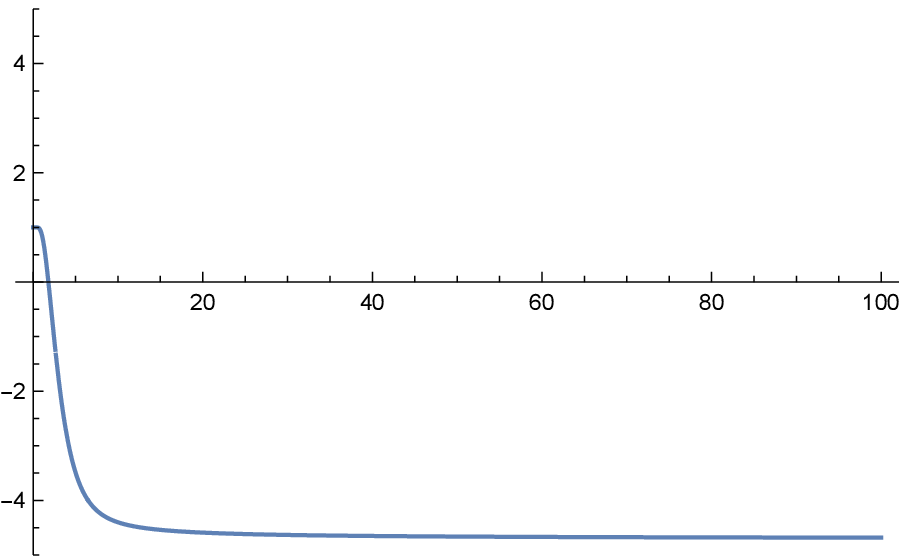}
  \caption{\small unique root}
  \label{fig4}
 \end{minipage} \mbox{}
 \begin{minipage}{0.47\hsize}   
   \includegraphics[width=0.99\textwidth]{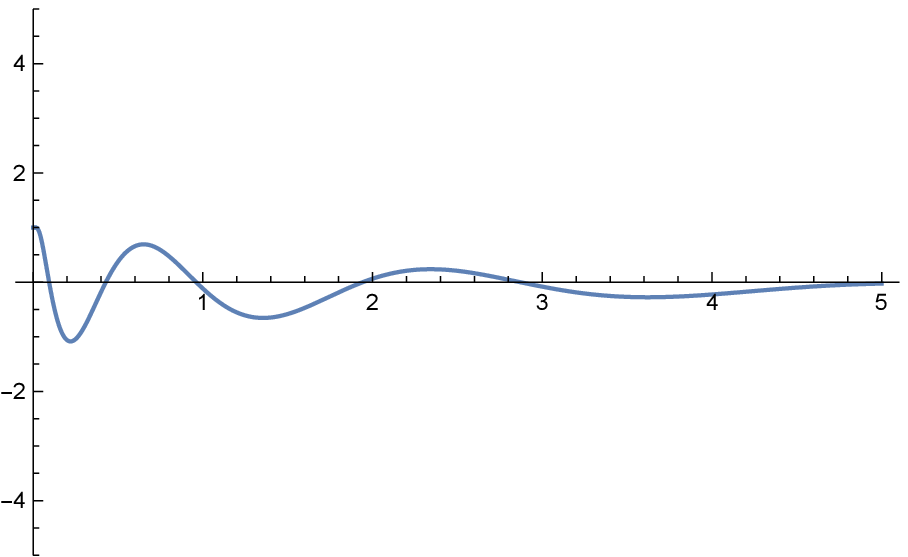}
  \caption{\small many roots}  
  \label{fig5}
 \end{minipage}
\end{figure}


\section{The Sparking Voltage $V_c^\dagger$}\lb{SB}

In this brief appendix we illustrate the location of the sparking voltage 
if $\gamma$ is very small or very large. Let $V_c^*$ be defined in \er{V^*}.

\begin{lem}\lb{B1}
Suppose that $\max_{V_c>0}g(V_c)>{\pi^2}/L^2$ (see Figure \ref{fig3}).  
If $\gamma$ is sufficiently small,
then $V_c^\dagger < V_c^*$ and $\frac{\pi^2}{4L^2} < g(V_c^\dagger) < \frac{\pi^2}{L^2}$.  
\end{lem}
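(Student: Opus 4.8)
The plan is to combine the facts already established about the smallest root of $D$ with a simple perturbation argument in the parameter $\gamma$. First observe that two of the three assertions come essentially for free and hold for \emph{every} $\gamma>0$: by Lemma~\ref{A1}(i), $D$ has a root in $(0,V_c^*)$ satisfying \eqref{positive1}, and since $g(V_c^*)=\pi^2/L^2$ while any root satisfying \eqref{positive1} has $g$-value strictly below $\pi^2/L^2$, the smallest root $V_c^\dagger$ must satisfy $V_c^\dagger<V_c^*$ and $g(V_c^\dagger)<\pi^2/L^2$. So the only real content is the lower bound $g(V_c^\dagger)>\pi^2/(4L^2)$, and that is where smallness of $\gamma$ enters.

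Next I would record the shape of $g$ in the Figure~\ref{fig3} case. Since $\max_{V_c>0}g>\pi^2/L^2>0$, the function $g$ has exactly the two positive roots $\Lambda^*<\Lambda^\#$, is negative on $(0,\Lambda^*)$ and positive on $(\Lambda^*,\Lambda^\#)$, and has a single local maximum (using the stated fact that $g$ has at most one), necessarily with value $>\pi^2/L^2$. Hence $g$ is strictly increasing on $[\Lambda^*,V_c^*]$, because both endpoints lie strictly between the unique local minimum and the unique local maximum of $g$; there $g$ rises from $0$ to $\pi^2/L^2$, so there is a unique $V_c^{**}\in(\Lambda^*,V_c^*)$ with $g(V_c^{**})=\pi^2/(4L^2)$. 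Note $\Lambda^*,V_c^*,V_c^{**}$ depend only on $(a,b,L)$.

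The perturbation step is as follows. Write $D(V_c)=D_0(V_c)-\frac{\gamma}{1+\gamma}e^{V_c/2}$, where $D_0$ is the $\gamma$-independent part of $D$, equal to $\cosh(L\sqrt{-g})+\frac{V_c}{2L\sqrt{-g}}\sinh(L\sqrt{-g})$ when $g<0$, to $\cos(L\sqrt g)+\frac{V_c}{2L\sqrt g}\sin(L\sqrt g)$ when $g>0$, and extended continuously when $g=0$; then $D_0\in C([0,\infty))$ with $D_0(0)=1$. I claim $D_0>0$ on $[0,V_c^{**}]$: on $[0,\Lambda^*]$ we have $g\le0$, so $\cosh\ge1$ and $\sinh\ge0$ force $D_0\ge1$; on $(\Lambda^*,V_c^{**}]$ we have $0<g\le\pi^2/(4L^2)$, i.e.\ $L\sqrt g\in(0,\pi/2]$, so $\cos(L\sqrt g)\ge0$, $\sin(L\sqrt g)>0$, and with $V_c>0$ this gives $D_0>0$. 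By compactness $m_0:=\min_{[0,V_c^{**}]}D_0>0$, and $m_0$ depends only on $(a,b,L)$. Set $\gamma_0:=m_0e^{-V_c^{**}/2}$. Then for $0<\gamma<\gamma_0$ and $V_c\in[0,V_c^{**}]$ we have $\frac{\gamma}{1+\gamma}e^{V_c/2}<\gamma e^{V_c^{**}/2}<m_0\le D_0(V_c)$, so $D(V_c)>0$ on $[0,V_c^{**}]$; hence the smallest root satisfies $V_c^\dagger>V_c^{**}$. Combining with $V_c^\dagger<V_c^*$ from the first paragraph, $V_c^\dagger\in(V_c^{**},V_c^*)$, and since $g$ is strictly increasing there, $\pi^2/(4L^2)=g(V_c^{**})<g(V_c^\dagger)<g(V_c^*)=\pi^2/L^2$, which together with $V_c^\dagger<V_c^*$ is exactly the claim.

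The only mild obstacle is bookkeeping in the sign analysis of $D_0$ on $[0,V_c^{**}]$ — correctly handling the borderline cases $g=0$ (via the continuous extension, where $D_0=1+V_c/2>0$) and $g=\pi^2/(4L^2)$ (where $\cos(L\sqrt g)=0$ but $\sin(L\sqrt g)=1$, so $D_0=V_c/\pi>0$) — together with justifying the strict monotonicity of $g$ on $[\Lambda^*,V_c^*]$. Both rely only on the elementary ``at most one local maximum'' property of $g$ already stated, and no delicate estimates are required.
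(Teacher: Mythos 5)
Your proof is correct, and the heart of it --- the sign analysis showing that the $\gamma$-independent part $D_0$ of $D$ is strictly positive wherever $g\le \pi^2/(4L^2)$ (hyperbolic terms when $g\le 0$, nonnegative cosine plus positive sine when $0<L\sqrt g\le \pi/2$) --- is exactly the computation the paper uses. Where you differ is in how ``$\gamma$ sufficiently small'' is handled. The paper simply sets $\gamma=0$ and asserts that ``by continuity it suffices to prove the strict inequalities in case $\gamma=0$''; this tacitly requires that the \emph{smallest} root $V_c^\dagger(\gamma)$ varies continuously as $\gamma\downarrow 0$, which is not justified there (one would need, e.g., the monotonicity of $D$ in $\gamma$ to control how the smallest root moves). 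Your decomposition $D=D_0-\tfrac{\gamma}{1+\gamma}e^{V_c/2}$, the uniform bound $m_0=\min_{[0,V_c^{**}]}D_0>0$, and the explicit threshold $\gamma_0=m_0e^{-V_c^{**}/2}$ replace that continuity step by a direct estimate $D>0$ on $(0,V_c^{**}]$, which is cleaner and even quantitative. The price you pay is that you must then convert $V_c^\dagger>V_c^{**}$ into $g(V_c^\dagger)>\pi^2/(4L^2)$ via the strict monotonicity of $g$ on $[\Lambda^*,V_c^*]$ (which the paper's version, arguing directly at the root, does not need); your justification of that monotonicity from the ``at most one local maximum'' property is at the same level of rigor as the paper's own reliance on Figure \ref{fig3}, and is unobjectionable since $g$ is real-analytic. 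One cosmetic point: the sentence ``any root satisfying \eqref{positive1} has $g$-value strictly below $\pi^2/L^2$'' is circular as stated; the clean argument is simply that $V_c^\dagger<V_c^*$ and $g<\pi^2/L^2$ on all of $(0,V_c^*)$.
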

\begin{proof}
We know from Lemma \ref{A1} that $V_c^\dagger < V_c^*$ and $g(V_c^\dagger) < \frac{\pi^2}{L^2}$. 
It only remains to show that $g(V_c^\dagger)>\frac{\pi^2}{4L^2}$.
By continuity it suffices to prove the strict inequalities of the conclusion in case $\gamma=0$.
We begin by proving that $g(V_c^\dagger)>0$.
On the contrary, suppose that $g(V_c^\dagger)\leq 0$.
This assumption and $\gamma=0$ lead to $D(V_c^\dagger)>0$, which contradicts to 
the fact that $V_c^\dagger$ is the sparking voltage, that is, $D(V_c^\dagger)=0$.
Now let us suppose that $0<g(V_c^\dagger)\leq \frac{\pi^2}{4L^2}$.
We see from $D(V_c^\dagger)=0$ that
\begin{equation*}
 \frac{V_c^\dagger}{2L\sqrt{g(V_c^\dagger)}}  =  -\cot \left(L \sqrt{g(V_c^\dagger)}\right).  
\end{equation*}
The signs are contradictory.
Thus we conclude that $g(V_c^\dagger)>\frac{\pi^2}{4L^2}$.
\end{proof}

\begin{lem}\lb{B2}
Suppose that $\max_{V_c>0}g(V_c)>0$ (see Figure \ref{fig1}).
There exists $\Gamma>0$ such that for $\gamma>\Gamma$, 
we have $V_c^\dagger\in(0,\La^*)$, where 
$\La^*$ is the smallest positive root of $g(V_c)=0$.
\end{lem}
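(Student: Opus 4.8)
The plan is to exhibit, for every sufficiently large $\ga$, a single point $c\in(0,\La^*)$ at which $D(c)<0$, and then combine this with the known positivity of $D$ near $V_c=0$ via the intermediate value theorem. First I would fix $a,b,L$ (hence $g$, $\La^*$) and choose once and for all a point $c\in(0,\La^*)$. Since $\max_{V_c>0}g(V_c)>0$ and $g$ has at most one local maximum on $(0,\infty)$ with $g<0$ near $0$ and near $+\infty$, the function $g$ has exactly two positive roots $\La^*<\La^\#$ and $g<0$ strictly on $(0,\La^*)$; in particular $g(c)<0$, so $\mu(c):=L\sqrt{-g(c)}$ is a fixed positive number. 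From the identity $\mu^2=\tfrac{V_c^2}{4}-L^2 h(V_c/L)$ and $h>0$ we also get $0<\mu(c)<c/2$. Throughout $(0,\La^*)$ the sparking function \er{spfu} can be written as
\[
D(V_c)=\cosh\mu+\frac{V_c}{2\mu}\sinh\mu-\frac{\ga}{1+\ga}e^{V_c/2}.
\]

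The key elementary inequality I would establish is: if $0<t<s$ then
\[
\cosh t+\frac{s}{t}\sinh t< e^{s}=\cosh s+\sinh s .
\]
Indeed the difference equals $(\cosh s-\cosh t)+\bigl(\sinh s-\tfrac{s}{t}\sinh t\bigr)$; the first bracket is positive because $\cosh$ is strictly increasing on $(0,\infty)$, and the second is positive because $u\mapsto \sinh u/u$ is strictly increasing on $(0,\infty)$. The point here is that $t=s$ is exactly the value at which $\cosh t+\tfrac{s}{t}\sinh t$ meets $e^{s}$, so the whole ``$\cosh+\sinh$'' part of $D$ can be compared with its exponential part, the gap being governed by how far $\mu$ sits below $V_c/2$.

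Applying this with $s=c/2$ and $t=\mu(c)\in(0,c/2)$ yields a fixed constant
\[
\eta:=e^{c/2}-\cosh\mu(c)-\frac{c}{2\mu(c)}\sinh\mu(c)>0,
\]
so that $D(c)=\tfrac{1}{1+\ga}e^{c/2}-\eta$. Hence there is $\Gamma>0$ (namely the solution of $\tfrac{1}{1+\Gamma}e^{c/2}=\eta$) such that $D(c)<0$ for every $\ga>\Gamma$. On the other hand $\lim_{V_c\to 0^+}D(V_c)=\tfrac{1}{1+\ga}>0$, so there is $\delta\in(0,c)$ with $D>0$ on $(0,\delta]$; by continuity of $D$ on $[\delta,c]$ and $D(c)<0$, the intermediate value theorem produces a root of $D$ in $(\delta,c)$, while $D$ has no root in $(0,\delta)$. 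Therefore the sparking voltage $V_c^\dagger=\inf\{V_c>0:D(V_c)=0\}$ lies in $[\delta,c)\subset(0,\La^*)$, which is the claim. (Since $g<0<\pi^2/L^2$ on $(0,\La^*)$, this also re-confirms that $V_c^\dagger$ satisfies \er{positive1}.) I do not anticipate a genuine obstacle here beyond cleanly recording the elementary inequality above; everything else is continuity and the intermediate value theorem.
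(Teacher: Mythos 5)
Your proof is correct, and the overall strategy is the same as the paper's: use $\lim_{V_c\to0^+}D(V_c)=\tfrac1{1+\gamma}>0$, exhibit a point of $(0,\La^*]$ where $D<0$ once $\gamma$ is large, and conclude by the intermediate value theorem. The only difference is the choice of evaluation point: the paper takes $V_c=\La^*$ itself, where $g=0$ makes $D$ collapse to $1+\tfrac{\La^*}{2}-\tfrac{\gamma}{1+\gamma}e^{\La^*/2}$, which is negative for large $\gamma$ by the crude bound $e^x>1+x+\tfrac{x^2}{2}$; you instead take an interior point $c\in(0,\La^*)$ where $g(c)<0$ and pay for it with the (correct) comparison $\cosh t+\tfrac{s}{t}\sinh t<e^{s}$ for $0<t<s$, applied with $t=\mu(c)<c/2=s$. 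Both yield the same conclusion; the paper's choice is marginally slicker because the hyperbolic terms disappear at $\La^*$, while your inequality has the minor side benefit of quantifying the gap $\eta$ and hence an explicit $\Gamma$.
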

\begin{proof}
We first see that $\lim_{V_c\to 0}D(V_c)=\frac{1}{1+\gamma}>0$.
Evaluating $D$ at $V_c=\La^*$ and using $g(\La^*)=0$, we have
\begin{equation*}
D(\La^*)= 1 
+ \frac{\La^*}{2}  
-   \frac{\gamma}{1+\gamma}  e^{\frac{\La^*}{2}}
<\frac{1}{1+\gamma}  \left(1+\frac{\La^*}{2}\right)
-   \frac{1}{2}\frac{\gamma}{1+\gamma}  \left(\frac{\La^*}{2}\right)^2
<0.
\end{equation*} 
In deriving the last inequality, we have taken $\gamma$ suitably large.
Therefore, the intermediate value theorem gives $V_c^\dagger\in(0,\La^*)$.
\end{proof}
\end{appendix}



\end{document}